\theoremstyle{plain}
\newtheorem{thm}{Theorem}[section]
\newtheorem{conj}[thm]{Conjecture}
\newtheorem{prop}[thm]{Proposition}
\newtheorem{lemma}[thm]{Lemma}
\theoremstyle{definition}
\newtheorem{defn}[thm]{Definition}
\newtheorem{ex}[thm]{Example}
\newtheorem{remark}[thm]{Remark}
\newcommand{\defemph}[1]{\emph{\color{Blue} #1}} 
\newcommand{\X}{\mathcal{X}}
\newcommand{\Z}{\mathbb{Z}}
\newcommand{\R}{\mathbb{R}}
\newcommand{\C}{\mathbb{C}}
\newcommand{\N}{\mathbb{N}}
\newcommand{\T}{\mathbb{T}}
\newcommand{\Q}{\mathcal{Q}}
\newcommand{\sgn}{\operatorname{sgn}}
\newcommand{\G}{G}
\newcommand{\g}{\mathfrak{g}}
\renewcommand{\a}{\alpha}
\renewcommand{\b}{\beta}
\renewcommand{\L}{\mathcal{L}}
\newcommand{\inv}{^{-1}}
\newcommand{\interior}{\operatorname{int}}
\newcommand{\Hom}{\operatorname{Hom}}
\newcommand{\M}{\mathcal{M}}
\begin{document}
\title{Conserved quantities of Q-systems from dimer integrable systems}
\author{Panupong Vichitkunakorn}
\address{Department of Mathematics, University of Illinois, Urbana, IL 61821 USA}
\address{vichitk1@illinois.edu}
\begin{abstract}
We study a discrete dynamic on weighted bipartite graphs on a torus, analogous to dimer integrable systems in Goncharov-Kenyon 2013.
The dynamic on the graph is an urban renewal together with shrinking all 2-valent vertices, while it is a cluster transformation on the weight.
The graph is not necessary obtained from an integral polygon.
We show that all Hamiltonians, partition functions of all weighted perfect matchings with a common homology class, are invariant under a move on the weighted graph.
This move coincides with a cluster mutation, analog to Y-seed mutation in dimer integrable systems.
We construct graphs for Q-systems of type A and B and show that the Hamiltonians are conserved quantities of the systems.
The conserved quantities can be written as partition functions of hard particles on a certain graph.
For type A, they Poisson commute under a nondegenerate Poisson bracket.
\end{abstract}
\maketitle


\section{Introduction}

The dimer model is an important model and has a long history in statistical mechanics \cite{Kas63}.
It is a study of perfect matchings of a bipartite graph, a collection of edges in which each vertex of the graph is incident to exactly one edge.
Cluster algebras introduced in \cite{FZ} are commutative algebras equipped with a distinguished set of generators called cluster variables.
There is a transformation called mutation which creates a new set of generators from an old one.
This transformation allows us to consider a dynamical system inside the cluster algebra.
One important feature of cluster dynamics is the Laurent property, namely any cluster variables can be expanded as a Laurent polynomial on initial cluster variables.
In many cases, this expansion can be written as a partition function of dimer configurations over a certain graph, see for examples \cite{Spe07,MP07,MS10,DF13,JMZ13}.
As a discrete dynamical system, many cluster dynamics have been shown to be Liouville-Arnold integrable \cite{FM11,GK13,FH14,HI14,GSTV12,GSTV16,Wil15,FM16}.
In \cite{GK13}, dimer models are used to construct a class of integrable systems enumerated by integral polygons.

The dimer integrable system introduced in \cite{GK13} is a continuous Liouville-Arnold integrable system whose phase space is the space of line bundles with connections on a bipartite torus graph $G$ obtained from an integral convex polygon.
Let $n$ be the number of faces of $G$.
The phase space can be combinatorially viewed as the space of weights on oriented loops of $G$ compatible with loop multiplication.
The space of oriented loops is generated by all counterclockwise loops around each face $W_i$ ($i\in\{1,\dots,n\}$) and two extra loops $Z_1,Z_2$ whose homology classes on the torus $\T$ generate $H_1(\T,\Z)\cong \Z\times\Z$.
The condition $\prod_i W_i = 1$ is the only nontrivial relation among these generators.
So the phase space is generated by the weight assignment $w_i$ ($i\in\{1,\dots,n\}$) together with $z_1,z_2$ on all the loops $W_i$ and $Z_1,Z_2$.
The only condition among these weights is $\prod_i w_i = 1$.

The phase space is equipped with a Poisson bracket defined from the intersection pairing on the twisted ribbon graph obtained from $G$.
A Y-seed \cite{FZ4} of rank $n+2$ indexed by the loop generators is assigned to the weighted graph $G$ where an entry of the exchange matrix is the intersection pairing of the generators and y-variables are their weights.

Hamiltonians are defined on the phase space as Laurent polynomials on the variables $\{w_1,\dots,w_n,z_1,z_2\}.$
For $(a,b)\in\Z\times\Z$, a Hamiltonian is written as the partition function over weighted perfect matchings on $G$ where the exponent of $z_1$ and $z_2$ in their weights is $a$ and $b$, respectively.
The Hamiltonians vanish for all but finitely many $(a,b).$
There is a move called urban renewal on the weighted graph $G$, which acts on its corresponding Y-seed as a Y-seed mutation.
This transformation is a change of coordinates on the phase space, and the Hamiltonians are invariant under the transformation.
By thinking of this change of coordinates as a map on the phase space, it becomes an evolution of a discrete dynamical system.
The evolution can also be written as a Y-seed mutation.
Various discrete dynamical systems has been studied in this framwork \cite{EFS12}.

The first goal of this paper is to rethink the discrete dimer integrable system of \cite{GK13} in cluster variable setting and extend it to bipartite torus graphs not necessary obtained from integral polygons.
We study, in Sections~\ref{sec:weighted_graph} and \ref{sec:hamiltonian}, a system associated with a general bipartite torus graph not necessary be obtained from an integral polygon.
Recall that the loop weights in \cite{GK13} are the y-variables in the associated Y-seed.
In our study, we instead associate weights that act as cluster variables in the associated cluster seed.
We show that the Hamiltonians are invariant under the evolution, see Theorem~\ref{thm:hamiltonian_conserved}.


Q-systems first appeared in an analysis of Bethe ansatz of generalized Heisenberg spin chains.
They were first introduced as sets of recurrence relations on commuting variables for the classical algebras \cite{KR87} and later generalized for exceptional algebras \cite{HKOTY99}, twisted quantum affine algebras \cite{HKOTT02} and double affine algebras \cite{Her10}.
See \cite{KNS11} for a review on the subject.

They can also be normalized and then realized as mutations on cluster variables \cite{RK08,DFK09,Wil15}.
Explicit conserved quantities for Q-systems of type $A$ have been studied in \cite{DFK10} as partition functions of hard particles on a graph and in \cite{GP16} as partition functions of weighted domino tilings on a cylinder.
In \cite{GP16} the Q-system of type $A_r$ is identified with the T-system of type $A_r\otimes \hat{A}_1$.

For simply-laced finite type and twisted affine type, conserved quantities arise by identifying the systems with the dynamics of factorization mappings on quotients of double Bruhat cells \cite{Wil15}.

The second goal of the paper is to use perfect matchings on graphs to compute conserved quantities of Q-systems associated with a finite Dynkin diagram of type $A$ and $B$ in Section~\ref{sec:A} and \ref{sec:B}, respectively.
The conserved quantities of type $A$ coincide with the partition functions obtained in \cite{DFK10} and \cite{GP16}.

The paper is organized as follows.
In Section~\ref{sec:Q_clus}, we review some definitions and results on Q-systems and cluster algebras.
In Section~\ref{sec:weighted_graph}, we introduce weighted bipartite torus graphs, weight of perfect matchings and weight of loops on the graph.
A move on weighted graphs is defined.
In Section~\ref{sec:hamiltonian}, we define Hamiltonians and show that they are invariant under the move.
In Section~\ref{sec:A} (resp. Section~\ref{sec:B}), a graph for $A_r$ (resp. $B_r$) Q-system is constructed.
The Hamiltonians are shown to be conserved quantities of the system.
They can also be written as partition functions of hard particles on a certain graph.
A nondegenerate Poisson bracket is constructed.
The conserved quantities Poisson-commute in type $A$ and are conjectured to Poisson-commute in type $B$.

Throughout the paper we denote 
$[x]_+ := \begin{cases} x, & x\geq 0, \\ 0, & x<0 \end{cases}.$
For $m,n\in \N$ where $m\leq n$, let $[m,n] := \{m,m+1,\dots,n\}.$


\subsection*{Acknowledgements}
The author would like to thank his advisors R. Kedem and P. Di Francesco for their helpful advice and comments. This work was partially supported in part by Gertrude and Morris Fine foundation, NSF grants DMS-1404988, DMS-1301636, DMS-1643027 and the Institut Henri Poincar\'e for hospitality.

\section{Q-systems and cluster algebras}\label{sec:Q_clus}

In this section we review the definition of Q-systems for simple Lie algebras and an interpretation as cluster mutations.


\subsection{Q-systems}

We use a normalized version of Q-systems studied in \cite{RK08, DFK09}, and we will use it as the definition of the \defemph{Q-systems}.

Let $\g$ be a simple Lie algebra with Cartan matrix $C$.
We denote a simple root $\a$ by its corresponding integer in $[1,r]$.
The Q-system associated with $\g$ is defined to be the following recurrence relation on a set of variables $\{ Q_{\a,k}\mid \a\in [1,r],k\in\Z \}$:
\begin{align}\label{eq:q}
  Q_{\a,k+1} Q_{\a,k-1} = Q_{\a,k}^2 + \prod_{\b\sim\a}\prod_{i=0}^{|C_{\a,\b}|-1} Q_{\b,\left\lfloor\frac{t_\b k+i}{t_\a}\right\rfloor},
\end{align}
where $\lfloor t \rfloor$ denotes the integer part of $t$, and $t_\a$ are the integers which symmetrize the Cartan matrix.
That is, $t_r=2$ for $B_r$, $t_\a = 2\, (\a<r)$ for $C_r$, $t_3=t_4=2$ for $F_4$, $t_2=3$ for $G_2$, and $t_\a=1$ otherwise.

The recursions \eqref{eq:q} for type $A$ and $B$ read:
\begin{align} \label{eq:Ar_qsys}
A_r: \quad Q_{\a,k+1}Q_{\a,k-1} = Q_{\a,k}^2+Q_{\a+1,k}Q_{\a-1,k} \quad (\a=1,\dots,r),
\end{align}
\begin{align} \label{eq:Br_qsys}
B_r:\quad \left\{ \begin{aligned}
Q_{\a,k+1}Q_{\a,k-1}& = Q_{\a,k}^2 + Q_{\a+1,k}Q_{\a-1,k} \quad (\a=1,\dots,r-2),\\
Q_{r-1,k+1}Q_{r-1,k-1} &= Q_{r-1,k}^2 + Q_{r-2,k}Q_{r,2k},\\
Q_{r,k+1}Q_{r,k-1} &= Q_{r,k}^2 + Q_{r-1,\left\lfloor\frac{k}{2}\right\rfloor}Q_{r-1,\left\lfloor\frac{k+1}{2}\right\rfloor},
\end{aligned}\right.
\end{align}
with boundary conditions $Q_{0,k}=Q_{r+1,k}=1$ for $k\in\Z$.

Given a valid set of initial values $\{Q_{\a,0} = q_{\a_,0}, Q_{\a_,1} = q_{\a,1} \mid \a\in [1,r]\}$ for $q_{\a_,0},q_{\a,1}\in\C^*$, we can solve for $Q_{\a,k}$ which satisfies the Q-system for any $\a\in[1,r]$ and $k\in\Z$ in terms of the initial values.
So Q-systems can be interpreted as discrete dynamical systems where the phase space is $\X=(\C^*)^{2r}$ and the (forward) evolution is
\begin{align*} (q_{1,k}&,\dots,q_{r-1,k},q_{r,t_r k},q_{1,k+1},\dots,q_{r-1,k+1},q_{r,t_r k+1}) \\
&\mapsto (q_{1,k+1},\dots,q_{r-1,k+1},q_{r,t_r k+t_r },q_{1,k+2},\dots,q_{r-1,k+2},q_{r,t_r k+t_r+1}) 
\end{align*}
where $(q_{1,0},\dots,q_{r,0},q_{1,1},\dots,q_{r,1})$ is the initial state.

A \defemph{conserved quantity} of a discrete dynamical system is a function $H:\X\rightarrow \C$ on the phase space $\X$ which is invariant under the evolution of the system.
That is
\[ H(x_{1,k},\dots,x_{n,k}) = H(x_{1,k+1},\dots,x_{n,k+1}) \]
for all $(x_{1,k},\dots,x_{n,k}) \in \X$.

The goal of this paper is to compute conserved quantities of the Q-systems of type A and B.


\subsection{Cluster Algebras}
We review some basic definitions in cluster algebras from \cite{FZ, FZ4} and results of \cite{RK08, DFK09} about the formulations of Q-systems as cluster mutations for simple Lie algebras.

\begin{defn}[Cluster seeds and Y-seeds] \label{def:seed}
A \defemph{cluster seed} (resp. \defemph{Y-seed}) of rank $n$ is a tuple $\Sigma = (\mathbf{A},B)$ (resp. $(\mathbf{y},B)$) consisting of:
	\begin{itemize}
	\item An $n\times n$ skew-symmetrizable matrix $B$, called an \defemph{exchange matrix}.
	\item A sequence of variables $\mathbf{A}=(A_i)_{i=1}^n$ (resp. $\mathbf{y}=(y_i)_{i=1}^n$), called \defemph{cluster variables} (resp. \defemph{y-variables}).
	\end{itemize}
\end{defn}
	
When the exchange matrix $B$ is skew-symmetric, the \defemph{quiver associated with $B$}, is defined to be a directed graph without 1-cycles or 2-cycles $\Q$ whose signed adjacency matrix is $B$.
In this case, we can also write a cluster seed (resp. Y-seed) as $(\mathbf{A},\Q)$ (resp.$(\mathbf{y},\Q)$).

The mutation at $k$ on the exchange matrix can be translated to a rule of \defemph{quiver mutation} at vertex $k$ consists of the following steps:
\begin{enumerate}
\item Reverse all the arrows incident to $k$.
\item For each pair of incoming arrow from $i$ to $k$ and outgoing arrow from $k$ to $j$, we add an arrow from $i$ to $j$.
\item Remove all resulting oriented 2-cycles one by one.
\end{enumerate}

\begin{ex} \label{ex:quiver_mutation}
Consider the following quiver with five vertices labeled by $k,k_1,\dots,k_4$.
A quiver mutation at vertex $k$ can described as follows.
\begin{center}
\includegraphics[scale=0.8]{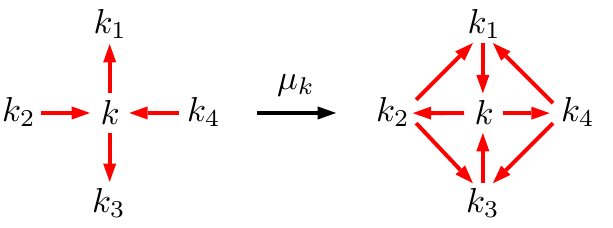}
\end{center}
\end{ex}

In this paper, we will only work on mutations at a vertex having exactly two incoming and two outgoing arrows.
So the change of arrows will be described as in Example~\ref{ex:quiver_mutation}.

\begin{defn}[Mutations and Y-seed mutations] \label{def:mutation}
For any $k\in [1,n]$, the \defemph{mutation} (resp. \defemph{Y-seed mutation}) in direction $k$ sends $(\mathbf{A},B)$ to $(\mu_k(\mathbf{A}),\mu_k(B)) = (\mathbf{A}',B')$ (resp. sends $(\mathbf{y},B)$ to $(\mu_k(\mathbf{y}),\mu_k(B))=(\mathbf{y}',B')$)  where:
	\begin{itemize}
	\item $B'_{ij} = 
		\begin{cases} 
		-B_{ij}, & i=k \text{ or } j=k,\\
		B_{ij}+\frac{1}{2}(|B_{ik}|B_{kj}+B_{ik}|B_{kj}|), & \text{otherwise}.
		\end{cases}$
	\item $A'_i = 
		\begin{cases}
		A_i, & i\neq k,\\
		A_k^{-1}\left( \prod_{B_{jk}>0}A_j^{B_{jk}} + \prod_{B_{jk}<0}A_j^{-B_{jk}} \right), & i=k.
		\end{cases}$
	\item $y'_i =
	  \begin{cases}
	  y_i^{-1}, & i = k,\\
	  y_i y_k ^{[B_{ki}]_+} (1+y_k)^{-B_{ki}}, & i \neq k.
	  \end{cases}$
	\end{itemize}
\end{defn}

\begin{defn} [$\tau-$coordinates \cite{GSV03}] \label{def:can_tran}
Let $(\mathbf{A},B)$ be a cluster seed.
There is a map $\tau$ sending $(\mathbf{A},B)$ to a Y-seed $(\mathbf{y},B)$ where
\[ y_j := \prod_{i = 1}^n A_i^{B_{ij}}\quad\text{for }j\in[1,n]. \]
\end{defn}

The map $\tau$ commutes with the mutations \cite[Proposition 3.9]{FZ4}:
\[ \tau(\mu_k(\Sigma)) = \mu_k(\tau(\Sigma)) \]
where the mutation on the left of the equation is a cluster mutation, while the mutation on the right is a Y-seed mutation.

\begin{thm} [{\cite[Theorem 3.1]{DFK09}}] \label{thm:qsys_cluster}
Let $C$ be the Cartan matrix of an underlying simple Lie algebra. The Q-system relation \eqref{eq:q} can be realized as cluster mutations. There is a sequence of mutations such that every Q-system variable appears as a cluster variable.
\end{thm}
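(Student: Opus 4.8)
The plan is to exhibit, for each simple Lie algebra $\g$ with Cartan matrix $C$, an explicit initial cluster seed $(\mathbf{A},B)$ together with a distinguished sequence of mutations, and then to prove two things: (i) one mutation of the sequence reproduces exactly one instance of the recursion \eqref{eq:q}, and (ii) the sequence is \emph{mutation periodic}, i.e.\ after performing it once the exchange matrix returns to itself up to a relabeling of the vertices by a shift of the time index. The theorem then follows by iterating the sequence forwards and backwards. For type $A_r$ the seed has rank $2r$: index the vertices by pairs $(\a,\epsilon)$ with $\a\in[1,r]$ and $\epsilon\in\{0,1\}$, set the cluster variable at $(\a,\epsilon)$ equal to the initial value $Q_{\a,\epsilon}$, and take $B$ to be the signed adjacency matrix of the quiver that carries a double arrow inside each ``column'' $\{(\a,0),(\a,1)\}$ and single arrows between adjacent columns $\a,\a\pm1$ oriented so that the two parities alternate. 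For non-simply-laced $\g$, where some $t_\a>1$, one first replaces the single column over node $\a$ by $t_\a$ columns, so as to absorb the faster ``clock'' on that node; the resulting $B$ is skew-symmetrizable -- indeed it can be taken skew-symmetric -- so it defines a cluster algebra in the sense of Definition~\ref{def:seed}.

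The first step is local. Fix a vertex $v=(\a,\epsilon)$ whose current cluster variable is $Q_{\a,k-1}$ and whose partner in the same column holds $Q_{\a,k}$. By construction the arrows entering $v$ with positive sign come, with multiplicity two, from the vertex carrying $Q_{\a,k}$, while the arrows leaving $v$ (the negative ones) go, with multiplicities dictated by $|C_{\a,\b}|$ and the floor shifts appearing in \eqref{eq:q}, to the vertices carrying the relevant $Q_{\b,\lfloor(t_\b k+i)/t_\a\rfloor}$ for $\b\sim\a$. The mutation rule of Definition~\ref{def:mutation} then yields
\[
 A'_v = A_v^{-1}\!\left(\prod_{B_{jv}>0}A_j^{B_{jv}}+\prod_{B_{jv}<0}A_j^{-B_{jv}}\right)
 = Q_{\a,k-1}^{-1}\Bigl(Q_{\a,k}^{2}+\prod_{\b\sim\a}\prod_{i=0}^{|C_{\a,\b}|-1}Q_{\b,\lfloor(t_\b k+i)/t_\a\rfloor}\Bigr),
\]
which is $Q_{\a,k+1}$ by \eqref{eq:q}. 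Performing this at each column once, in the order prescribed by the parity decomposition, advances $k$ by one on the whole slice.

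For step (ii) I would track the three moves of quiver mutation (reverse the arrows at $v$, add the composite arrows through $v$, cancel $2$-cycles) through the entire sequence and check that the quiver is reproduced verbatim after the relabeling that shifts the time index by one; this is the analogue of Fordy--Marsh periodicity and it is exactly what makes the iteration well defined. Because each individual mutation is an involution, running the sequence in reverse produces $Q_{\a,k}$ for $k<0$ as well. Hence every $Q_{\a,k}$, $\a\in[1,r]$, $k\in\Z$, occurs as a cluster variable in some seed of the mutation class, which proves the theorem.

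The main obstacle is the non-simply-laced case. Because of the floor functions $\lfloor(t_\b k+i)/t_\a\rfloor$, the coupling between a fast node $\a$ and its neighbors is not homogeneous in $k$, so the $t_\a$ auxiliary columns introduced over that node must be wired into the quiver with precisely the right orientations and arrow multiplicities for the local identity of step (i) and the periodicity of step (ii) to hold simultaneously, and one must re-check skew-symmetrizability at every intermediate seed. In type $A$ everything is homogeneous in $k$ and the whole verification reduces to a short computation; this is the setting we shall mostly need in the constructions of Sections~\ref{sec:A} and \ref{sec:B}.
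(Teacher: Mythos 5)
Note first that the paper does not prove this statement at all: it is imported verbatim from \cite[Theorem 3.1]{DFK09}, and what the paper supplies afterwards is only the explicit data realizing it for types $A$ and $B$ --- a seed of rank $2r$ with exchange matrix $B=\begin{bsmallmatrix} C-C^T & C^T\\ -C & 0\end{bsmallmatrix}$, a mutation sequence $\mu$ and a relabeling $\sigma$ as in \eqref{eq:ArQsys_sq_mutation} and \eqref{eq:BrQsys_sq_mutation}. Your plan (exhibit a seed, check that a single mutation reproduces one instance of \eqref{eq:q}, check that the quiver is reproduced after the full slice up to a time-shift relabeling, then iterate forwards and backwards) is the right shape of argument, and your rank-$2r$ seed for type $A_r$ agrees with the one recorded here.

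As written, though, there are genuine gaps. The periodicity claim (your step (ii)) is the technical heart of the argument and you only say you ``would track'' the quiver through the sequence; nothing is verified, and the local computation in step (i) at the second and later mutations of a slice already presupposes knowledge of the intermediate quivers, so (i) cannot be settled independently of (ii). Moreover ``the order prescribed by the parity decomposition'' is not meaningful beyond the simply-laced case: in type $B_r$ the sequence is $\mu_{2r}\bigl(\prod_{i=1}^{r-1}\mu_i\bigr)\mu_r$, so the short-root node is mutated out of step with the parity pattern and the order matters. More seriously, your device for non-simply-laced algebras --- replacing the column over a node $\alpha$ by $t_\alpha$ columns --- is not the construction of \cite{DFK09} or of this paper: both keep the seed at rank $2r$, letting the two slots $r$ and $2r$ carry $Q_{r,2k}$ and $Q_{r,2k+1}$ and mutating/relabeling them on a faster clock, and you give no wiring, no arrow multiplicities, and no verification that your enlarged seed reproduces the floor-function couplings in \eqref{eq:q} or is mutation periodic; you yourself flag this as ``the main obstacle'' and leave it unresolved, yet the theorem is asserted for every simple Lie algebra, so this case cannot be deferred. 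As it stands the proposal is an outline whose hardest steps are missing; in the context of this paper the correct move is either to cite \cite{DFK09} (as the paper does) or to carry out the local identity and periodicity check for the explicit seeds and sequences \eqref{eq:ArQsys_sq_mutation} and \eqref{eq:BrQsys_sq_mutation}.
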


The sequence of mutations in the theorem is explicitly described in terms of the root system of the underling Lie algebra.
We translate it into a sequence of mutation together with relabeling of indices as follows.

Let $C$ be a Cartan matrix of rank $r$, we let $\Sigma_k := (\mathbf{A}_k , B)$ be cluster seeds of rank $2r$ where
\begin{align} B = \begin{bsmallmatrix}C-C^T & C^T \\ -C & 0\end{bsmallmatrix} .\end{align}
The cluster tuple $\mathbf{A}_k$ consists of Q-system variables.
There exists a sequence of mutations $\mu$ and a permutation $\sigma \in \mathfrak{S}_{2r}$ connecting the seeds as the following.
\begin{align} \label{eq:sq_mutation}
\cdots\stackrel{\sigma\mu}{\longrightarrow} \Sigma_0 \stackrel{\sigma\mu}{\longrightarrow} \Sigma_1 \stackrel{\sigma\mu}{\longrightarrow} \Sigma_2 \stackrel{\sigma\mu}{\longrightarrow}\cdots
\end{align}

The cluster tuple $\mathbf{A}_k$, the permutation $\sigma$ and the sequence of mutations $\mu$ are defined according to the type of the Cartan matrix $C$.
The following are their definitions for type $A$ and $B$.
\begin{itemize}
\item For type $A_r$ we have  $\mu :=  \prod_{i=1}^r \mu_i$,
\begin{align} \label{eq:ArQsys_sq_mutation}
A_{i,k} := \begin{cases} Q_{i,k}, & i\in[1,r],\\ Q_{i-r,k+1}, & i\in[r+1,2r],\end{cases} \quad \sigma(i) := i+r \bmod 2r.
\end{align}
\item For type $B_r$ we have $\mu := \mu_{2r}(\prod_{i=1}^{r-1} \mu_i) \mu_r,$
\begin{align} \label{eq:BrQsys_sq_mutation}
\quad A_{i,k} := \begin{cases} Q_{i,k}, & i\in[1,r-1],\\ Q_{r,2k}, & i = r, \\ Q_{i-r,k+1}, & i\in[r+1,2r-1], \\ Q_{r,2k+1}, & i=2r,\end{cases} \quad \sigma(i): = \begin{cases} i, & i = r\text{ or }2r, \\ i+r \bmod 2r, & \text{otherwise}. \end{cases}
\end{align}
\end{itemize}
The quivers associated to the matrices $B$ for type $A$ and $B$ are shown in Figure~\ref{fig:quiver_ArBr}.
We also note that the mutations $\mu_i$ in the product $\prod_i \mu_i$ in equations \eqref{eq:ArQsys_sq_mutation} and \eqref{eq:BrQsys_sq_mutation} commute, so the product makes sense.

\begin{figure}
\includegraphics[scale=0.7]{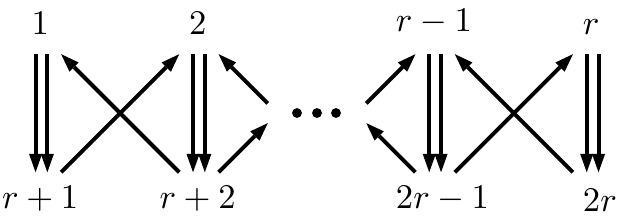}\hspace{30pt}\includegraphics[scale=0.7]{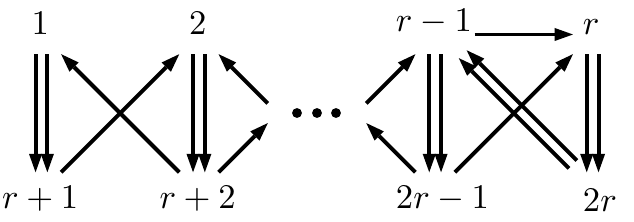}
\caption{The quivers for $A_r$ Q-system (left) and $B_r$ Q-system (right).}
\label{fig:quiver_ArBr}
\end{figure}


\section{Weighted bipartite torus graphs}
\label{sec:weighted_graph}

We think of a torus $\T = \R^2 / \Z^2$ as a rectangle with opposite edges identified.
A \defemph{torus graph} is a graph embedded on the torus with no crossing edges.
We do not require that every face is contractible.
A \defemph{weighted graph} $(\G,\mathbf{A})$ is a pair of a graph $\G$ with $n$ faces and a collection $\mathbf{A}=(A_i)_{i=1}^n$ of variables or nonzero complex numbers called \defemph{weights}.
A \defemph{bipartite graph} is a graph whose vertices can be colored into two colors (black and white) such that every edge connects two vertices of different colors.

Throughout the paper, we let $\G$ be a bipartite torus graph with $n$ faces.
We label the faces by the numbers $1$ to $n$, so $F(\G) = [1,n]$.


\subsection{Quivers associated with graphs and mutations}
\label{subsec:quiver_from_graph}

For a bipartite torus graph $\G$, we let \defemph{$\Q_\G$} be the \defemph{quiver associated with $\G$} defined as follows. 
The nodes of $\Q_\G$ are indexed by the faces of $\G$. 
There will be an arrow between node $i$ and node $j$ for each edge adjacent to face $i$ and face $j$.
The arrow is oriented in the way that the black vertex of $\G$ is on the right of the arrow, see the following figure.
\begin{center}
\includegraphics[scale=1]{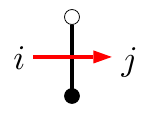}
\end{center}
Lastly, any 2-cycles are removed one by one until the directed graph has no 2-cycles.
We also let \defemph{$B_\G$} denote the signed adjacency matrix of $\Q_\G$.
Note that from the construction the resulting directed graph can possibly contain 1-cycles.

To a weighted bipartite torus graph $(\G,\mathbf{A})$ whose $\Q_\G$ has no 1-cycles, we can associate a cluster seed $(\mathbf{A},\Q_\G)$ of rank $n$, see Definition~\ref{def:seed}.

We then define two moves on weighted bipartite graphs.

\begin{defn} \label{def:urban}
An \defemph{urban renewal} \cite{Ciu98}  (a.k.a. spider move \cite{GK13}, square move \cite{Postnikov06}) at a quadrilateral face $k$ whose four sides are distinct sends $(\G,\mathbf{A})$ to $(\G',\mathbf{A}')$ as follows.
\begin{itemize}
	\item The graph $\G'$ is obtained from $\G$ by replacing the subgraph of $\G$ containing four edges around the face $k$ with a graph described in the following picture.
	The labels are face indices.
	The four outer vertices connect to the rest of the graph.
	\begin{center}
	\includegraphics[scale=0.8]{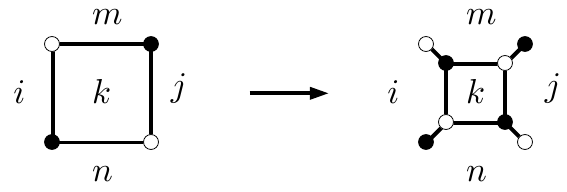}
	\end{center}
	\item The weight $\mathbf{A}' =(A'_i) = \mu_k(\mathbf{A})$ are transformed according to the cluster transformation in direction $k$ (Definition~\ref{def:mutation}).
	That is, 
	\begin{align} \label{eq:urban_on_weight}
	A'_\ell = \begin{cases}(A_i A_j + A_m A_n)/A_k, & \ell=k,\\ A_\ell, &\ell\neq k. \end{cases}
	\end{align}
\end{itemize}
\end{defn}

\begin{defn} \label{def:shrink}
A \defemph{shrinking of a 2-valent vertex} sends $(\G,\mathbf{A})$ to $(\G',\mathbf{A})$ where $\G'$ is obtained from $\G$ by removing a 2-valent vertex and identifies its two adjacent vertices, while the weights of the graph stay unchanged.
It can be visualized in the following picture.
(We also have another version of the move when the colors of the vertices are switched.)
\begin{center}
\includegraphics[scale=1]{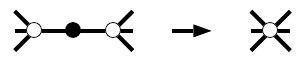}
\end{center}
\end{defn}

\begin{defn} [Mutations of weighted bipartite graphs] \label{def:graph_mutation}
Let $k$ be a quadrilateral face of a weighted bipartite graph $(\G,\mathbf{A})$.
A \defemph{mutation} at face $k$, $\mu_k$, is a combination of an urban renewal at face $k$ and shrinking of all 2-valent vertices.
Two weighted graphs are \defemph{mutation equivalent} if one is obtained from another by a sequence of mutations.
\end{defn}

\begin{thm} \label{thm:mutation_mutation}
Let $(\G,\mathbf{A})$ be a weighted bipartite graph whose $\Q_\G$ has no 1-cycles, $k$ be a quadrilateral face of $\G$.


If every pair of adjacent faces of $k$ are distinct except possibly pairs of opposite faces, then the mutation $\mu_k$ on $(\G,\mathbf{A})$ is equivalent to a cluster mutation $\mu_k$ on a seed $(\mathbf{A}, \Q_\G)$.
In particular, $\Q_{\mu_k(\G)} = \mu_k(\Q_{\G})$ and the weight on $\G$ are transformed according to the cluster transformation with respect to the quiver $\Q_\G$.
\end{thm}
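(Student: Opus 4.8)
The plan is to reduce the whole statement to a purely local computation in a disc around the face $k$, since both the graph mutation $\mu_k$ of Definition~\ref{def:graph_mutation} (urban renewal followed by shrinking of the $2$-valent vertices it creates) and the cluster mutation $\mu_k$ on $(\mathbf{A},\Q_\G)$ only alter data attached to $k$ and to the faces adjacent to $k$. First I would record the local picture of $\G$ near $k$: a quadrilateral face bounded by four edges $\epsilon_1,\dots,\epsilon_4$ in cyclic order whose endpoints alternate in colour, with $f_1,f_2,f_3,f_4$ the faces on the far sides of $\epsilon_1,\dots,\epsilon_4$. Using the orientation rule ``black vertex on the right'' one checks that, after fixing an orientation convention, the two opposite edges carrying a white-then-black traversal (say $\epsilon_1,\epsilon_3$) contribute the two arrows pointing into $k$ and $\epsilon_2,\epsilon_4$ the two pointing out; thus in $\Q_\G$ the in-neighbours of $k$ form the opposite pair $\{f_1,f_3\}$ and the out-neighbours the opposite pair $\{f_2,f_4\}$. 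The hypothesis — adjacent faces of $k$ distinct except possibly opposite ones — is exactly what guarantees that no face is simultaneously an in- and an out-neighbour of $k$ (which would produce a $2$-cycle at $k$ and destroy the ``two in, two out'' shape of Example~\ref{ex:quiver_mutation}), while still permitting the harmless coincidences $f_1=f_3$ and/or $f_2=f_4$; the hypothesis that $\Q_\G$ has no $1$-cycles makes $(\mathbf{A},\Q_\G)$ a genuine cluster seed and also forces $f_i\ne k$.

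Next I would verify the two halves of the conclusion. For the weights there is nothing to do beyond matching conventions: Definition~\ref{def:urban} transforms $\mathbf{A}$ by $A'_k=(A_iA_j+A_mA_n)/A_k$ with $\{i,j\},\{m,n\}$ the two opposite-face pairs, and by the previous paragraph these are precisely the in- and out-neighbour sets of $k$ in $\Q_\G$, so $A'_k=A_k\inv\bigl(\prod_{B_{jk}>0}A_j^{B_{jk}}+\prod_{B_{jk}<0}A_j^{-B_{jk}}\bigr)$ as in Definition~\ref{def:mutation} (a coincidence $f_1=f_3$ producing the exponent $B_{f_1,k}=2$, consistently with $A_iA_j=A_{f_1}^2$). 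For the quiver I would track the effect of urban renewal plus shrinking against the three steps of quiver mutation: (1) the boundary vertices of the new quadrilateral face $k$ have their colours flipped, which reverses each of the four arrows incident to $k$; (2) the new edges introduced by the move make each former in-neighbour $f_a$ adjacent to each former out-neighbour $f_b$, adding one arrow $f_a\to f_b$ for each such pair, which matches the term $\tfrac12(|B_{ik}|B_{kj}+B_{ik}|B_{kj}|)$; and (3) the $2$-valent vertices created by the urban renewal, once shrunk via Definition~\ref{def:shrink}, carry out the remaining simplification, cancelling the $2$-cycles that arise in step (2). Since every edge and face outside the union of $k$ with its adjacent faces is untouched, this gives $\Q_{\mu_k(\G)}=\mu_k(\Q_\G)$, and together with the weight computation it proves the theorem. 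Along the way one checks that $\mu_k(\G)$ is again a bipartite torus graph: urban renewal preserves the $2$-colouring by construction, and shrinking a $2$-valent vertex identifies two vertices of the same colour, so bipartiteness and the torus embedding persist.

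The main obstacle is the bookkeeping in steps (2) and (3) together with the degenerate configurations. Because the boundary vertices of $k$ need not be trivalent — this is the general urban renewal, not only the spider move of \cite{GK13} — one must verify carefully that the move reroutes the extra edges at those vertices correctly, creates no unintended $2$-valent vertices, and that the subsequent shrinking reproduces exactly the entries $\mu_k(B_\G)_{ij}$ for $i,j\ne k$, including the $2$-cycle cancellations. When $f_1=f_3$ or $f_2=f_4$ the local picture degenerates and these cancellations involve double arrows, so I would organise the verification as a short finite check over the four configurations — generic, $f_1=f_3$ only, $f_2=f_4$ only, both — the generic case being literally Example~\ref{ex:quiver_mutation}. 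Everything else is routine local combinatorics on the torus graph and its dual quiver.
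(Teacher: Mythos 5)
Your proposal is correct and follows essentially the same route as the paper: a local analysis around face $k$ in which the hypothesis on adjacent faces guarantees the two-in/two-out shape at $k$, the urban renewal reproduces the arrow reversal and composition steps of quiver mutation together with the exchange relation $A'_k=(A_{k_1}A_{k_3}+A_{k_2}A_{k_4})/A_k$, and shrinking the $2$-valent vertices realizes the cancellation of $2$-cycles. Your explicit treatment of the degenerate coincidences $f_1=f_3$ or $f_2=f_4$ (double arrows, exponent $B_{f_1,k}=2$) is a welcome elaboration of what the paper leaves implicit.
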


\begin{proof}
Consider the following picture where $k_i$ are adjacent faces of $k$.
\begin{center}
\includegraphics[scale=.8]{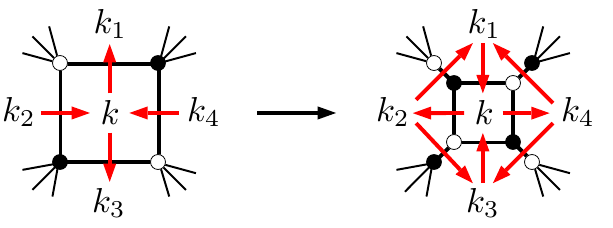}
\end{center}
Since $k_i \neq k_{i+1}$ for all $i$ (reading modulo 4), the vertex $k$ of $\Q_\G$ has exactly two incoming and two outgoing arrows.
(There is no cancellation of arrows incident to $k$.)
The arrows of $\Q_\G$ are then transformed according to the rule of quiver mutations.
Also, the mutation does not introduce any 1-cycles because $k_i$ and $k_{i+1}$ are distinct.
This also shows that the weights on the graph are transformed according the cluster mutation:
\[ A_k' = (A_{k_1}A_{k_3} + A_{k_2}A_{k_4})/ A_k. \]

Shrinking of a 2-valent vertex corresponds to removing a pair of arrows with opposite orientations that may appear between $k_i$ and $k_{i+1}$.
This also follows the rule of quiver mutations.
\begin{center}
\includegraphics[scale=1]{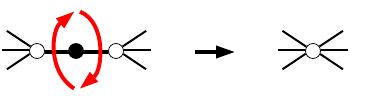}
\end{center}
So we can conclude that $\Q_{\mu_k(\G)} = \mu_k(\Q_{\G}).$
\end{proof}

\begin{ex}
The following is an example of graph mutations.
\begin{center}
\includegraphics[scale=0.7]{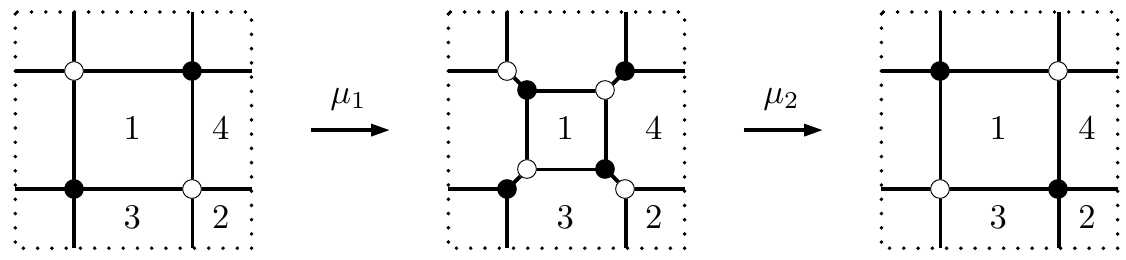}
\end{center}
We consider the first mutation $\mu_1$.
Using the notations in the proof of Theorem~\ref{thm:mutation_mutation}, when $k = 1$, we have $(k_1,k_2,k_3,k_4) = (3,4,3,4)$.
We see that $k_i \neq k_{i+1}$ for all $i\in[1,4]$.
So the mutation $\mu_1$ on the weighted graph is equivalent to the mutation $\mu_1$ on the corresponding cluster seed.
Similarly, we can see that $\mu_2$ satisfies the requirement in Theorem~\ref{thm:mutation_mutation}.

The corresponding mutations of quivers are shown as follows.
\begin{center}
\includegraphics[scale=0.7]{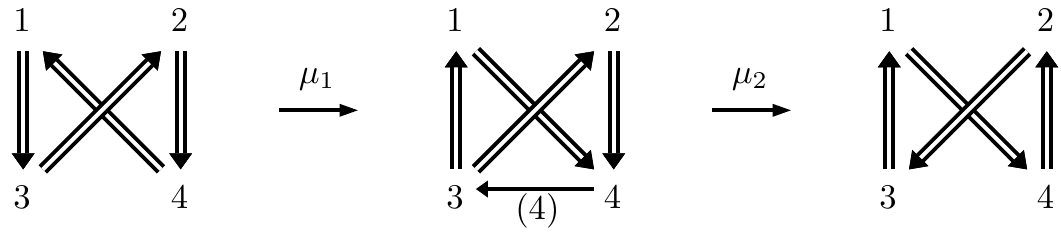}
\end{center}

The weight changes as follows.
\[ (A_1,A_2,A_3,A_4) \stackrel{\mu_1}{\longrightarrow} (A'_1,A_2,A_3,A_4)\stackrel{\mu_2}{\longrightarrow} (A'_1,A'_2,A_3,A_4) \]
where $A'_1 = (A_4^2+A_3^2)/A_1$ and $A'_2 = (A_3^2+A_4^2)/A_2$.

\end{ex}


\subsection{Perfect matchings and oriented loops}

A \defemph{perfect matching} of a graph $\G$ is a subset $M\subseteq E(\G)$ of the set of all edges in which every vertex of $\G$ is incident to exactly one edge in $M$.
\begin{defn} [Weight of perfect matchings] \label{def:weight_matching}
Let $M$ be a perfect matching of a weighted graph $(\G,\mathbf{A})$.
The \defemph{weight} of $M$ is defined by
\begin{align*}
w(M) := \prod_{e\in M}w(e).
\end{align*}
The contribution $w(e)$ is defined by $w(e) := (A_i A_j)^{-1}$ when edge $e$ is adjacent to the faces $i$ and $j$.
\end{defn}

\begin{defn} \label{def:loop_from_matching}
Let $M,M'$ be perfect matchings of $\G$.
Let \defemph{$[M]$} denote the collection of all edges in $M$ oriented from black to white.
Similarly, \defemph{$-[M]$} is the collection of edges in $M$ oriented from white to black.

We then define \defemph{$[M]-[M']$} to be the superimposition of $[M]$ and $-[M']$ with all double edges having opposite orientations removed.
In other word, $[M]-[M']$ is the set of edges $(M\setminus M')\cup (M'\setminus M)$ where an edge is oriented from black to white (resp. white to black) if it is in $M$ (resp. $M'$).
\end{defn}

It will be proved in Proposition~\ref{prop:matching_loop} that $[M]-[M']$ is a loop on $\G$ (possibly contains more than one connected component).

Let $L_1,\dots,L_m$ be loops on $\G$.
The \defemph{product of loops} $\prod_{i=1}^m L_i$ is the superimposition of $L_1,\dots,L_m$ with edges having opposite orientation removed one by one.
It is clear that the homology class of the product of loops is the sum of their homology classes.

\begin{prop} \label{prop:matching_loop}
Let $M$ and $M'$ be perfect matchings of a bipartite torus graph $\G$.
Then $[M]-[M']$ is a product of non-intersecting simple loops on $\G$.
\end{prop}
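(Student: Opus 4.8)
The plan is to analyze the edge set $D := (M\setminus M')\cup(M'\setminus M)$ directly and show it is a disjoint union of simple cycles, using the defining property of perfect matchings together with bipartiteness. First I would observe that for any vertex $v$ of $\G$, exactly one edge of $M$ and exactly one edge of $M'$ is incident to $v$. If those two edges coincide, the edge lies in $M\cap M'$ and contributes nothing to $D$, so $v$ is isolated in $D$. If they differ, then $v$ is incident to exactly two edges of $D$: one from $M\setminus M'$ and one from $M'\setminus M$. Hence every vertex of the subgraph $D$ has degree $0$ or $2$, which already forces $D$ to be a disjoint union of cycles (together with isolated vertices, which we discard).

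Next I would verify that these cycles are \emph{simple} and \emph{non-intersecting}, i.e. vertex-disjoint. Non-intersection is immediate from the degree-$2$ count: a vertex shared by two cycles, or visited twice by one cycle, would have degree at least $4$ in $D$. For the orientation claim, at each vertex $v$ of $D$ the incident $M$-edge is oriented black-to-white and the incident $M'$-edge white-to-black; since $\G$ is bipartite, if $v$ is white both of its $D$-edges point \emph{into} $v$ only if\ldots — more carefully, at a white vertex the $M$-edge (black$\to$white) points in and the $M'$-edge (white$\to$black) points out, and at a black vertex the $M$-edge points out and the $M'$-edge points in. So along each cycle the edges alternate $M,M',M,M',\dots$ and the induced orientations are consistent, making each component a genuine oriented loop on $\G$. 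This also matches Definition~\ref{def:loop_from_matching}: no double edge with opposite orientations survives, because an edge in both $M$ and $M'$ is simply not in $D$.

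Finally I would note that a torus graph is embedded without crossing edges, so ``non-intersecting simple loops'' is meant combinatorially (vertex-disjoint), which is exactly what the degree argument gives; no appeal to planarity or to the genus of the torus is needed. I expect the main obstacle to be purely expository: being careful that the alternation argument is stated correctly at black versus white vertices, and confirming that the length of each cycle is even (forced by bipartiteness) so that the alternating pattern $M,M',M,M',\dots$ closes up consistently around the loop. Once the local degree-$2$ structure is established, everything else is bookkeeping.
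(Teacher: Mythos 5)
Your argument is correct and is essentially the same as the paper's: both proofs count, at each vertex, the single incident $M$-edge and single incident $M'$-edge, note that they either coincide (vertex isolated in the symmetric difference) or give exactly two incident edges with opposite orientations, and conclude that $[M]-[M']$ is a union of non-intersecting oriented simple loops. Your extra remarks on alternation and even cycle length are fine but not needed beyond what the local degree argument already gives.
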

\begin{proof}
Since $M$ and $M'$ are both perfect matchings of $\G$, any vertex $v$ in $\G$ is incident to exactly one edge in $M$ and exactly one edge in $M'$ (possibly distinct edges).
If the two edges are the same, $v$ has no incident edge in $[M]-[M']$.
If the two edges are different, $v$ has exactly two incident edges in $[M]-[M']$ with different orientation. (One is oriented from black to white, while the other is from white to black.)
Hence $[M]-[M']$ is a product of non-intersecting oriented simple loops on $\G$. 
\end{proof}

Since $\G$ is a torus graph, a loop on $\G$ can be embedded on the torus.
Using the identification $H_1(\T,\Z)\cong \Z\times\Z$, we let a horizontal loop going from left to right have homology class $(1,0)$, and let a vertical loop going from bottom to top have homology class $(0,1)$.

\begin{defn} [Homology class of $M$ with respect to $M'$] \label{def:hom_of_M}
Let $M,M'$ be perfect matchings of $\G$.
Let \defemph{$[M]_{M'}$}$\in H_1(\T,\Z)$ denote the homology class of $[M]-[M']$ on the torus.
\end{defn}

\begin{defn} [Weight of paths] \label{def:weight_path}
For an oriented path $\rho$ on a weighted graph $(\G,\mathbf{A})$, we define the \defemph{weight} of $\rho$ to be
\[ w(\rho) := \prod_{e} w(e) \]
where the product runs over all directed edges in $\rho$.
Let the edge $e$ be adjacent to faces $i$ and $j$.
The contribution from $e$ is $w(e):=(A_iA_j)^{-1}$ when $e$ is oriented from black to white, while $w(e):=A_iA_j$ when $e$ is oriented from white to black.
See the following figures.
\begin{center}
\includegraphics[scale=0.8]{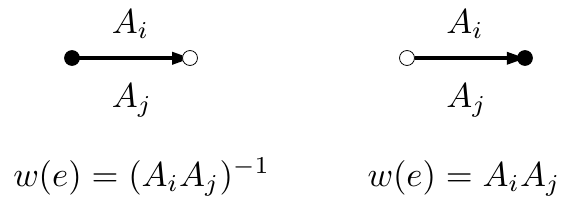}
\end{center}
\end{defn}

Comparing to the weight of perfect matchings, we notice that $$w([M]-[M']) = w(M)/w(M')$$ for any perfect matchings $M,M'$ of $\G$.

\begin{ex}
Let $\G$ be a weighted bipartite torus graph whose weight is shown at each face in the following picture.
The perfect matchings $M$ and $M'$ are depicted below.
\begin{center}
\includegraphics[scale=0.7]{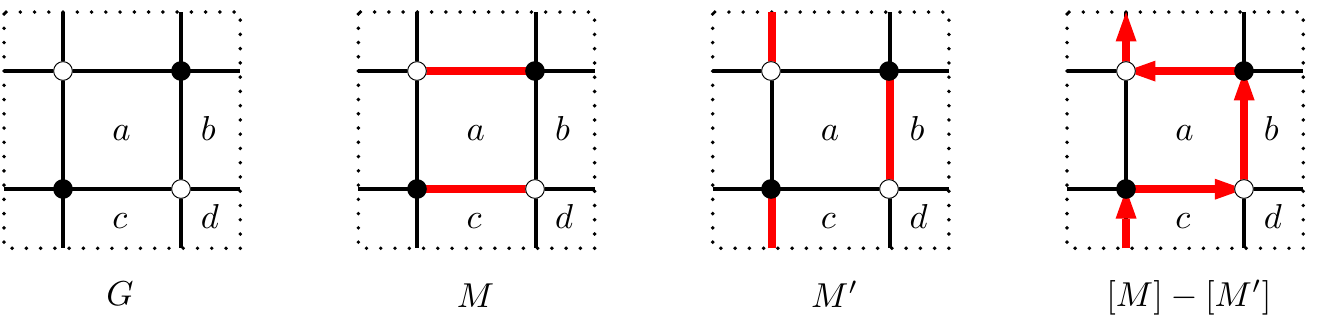}
\end{center}
We have $w(M) = a^{-2}c^{-2}$ and $w(M')= (abcd)^{-1}$.
The loop $[M]-[M]'$ has weight $w([M]-[M']) = bd/ac = w(M)/w(M')$.
Lastly, $[M]_{M'}=(0,1)$.
\end{ex}


\section{Hamiltonians}
\label{sec:hamiltonian}

In this section, we define Hamiltonians on a weighted graph $(\G,\mathbf{A})$ with respect to a perfect matching $M_0$, and show that they are invariant under certain conditions.
They will be used to compute conserved quantities of Q-systems of type $A$ in Section~\ref{sec:A} and type $B$ in Section~\ref{sec:B}.

\subsection{Hamiltonians}

We modify the Hamiltonians defined in \cite{GK13} and use the weight of perfect matchings induced from the weight of $\G$ in Definition~\ref{def:weight_matching}.

\begin{defn} \label{def:hamiltonian}
Let $(\G,\mathbf{A})$ be a weighted torus graph, $M_0$ be a perfect matching of $\G$, $(i,j)\in H_1(\T,\Z)\cong\Z\times\Z$.
The \defemph{Hamiltonian} of $(\G,\mathbf{A})$ with respect to $(i,j)$ and $M_0$ is a Laurent polynomial in $\{A_i\}_{i=1}^n$ defined by
\begin{align} \label{eq:hamiltonian}
H_{(i,j),\G,M_0}(A_1,\dots,A_n) := \sum_{M:[M]_{M_0}=(i,j)} w(M)/w(M_0), 
\end{align}
where the sum runs over all perfect matchings $M$ of $\G$ such that the homology of $[M]-[M_0]$ is $(i,j)$.
The weight $w$ is defined as in Definition~\ref{def:weight_matching}.
We say that $M_0$ is a \defemph{reference perfect matching}.
\end{defn}

\begin{defn} [Induced perfect matching by an urban renewal]
Let $k$ be a quadrilateral face of $G$.
Let $G'$ be a graph obtained from $G$ by an urban renewal at $k$.
Let $M$ be a perfect matching of $G$ containing exactly one side of $k$.
We say that a perfect matching $M'$ of $G'$ is \defemph{induced from $M$ by an urban renewal} if $M'$ coincides with $M$ on all edges of $G$ not related to the urban renewal, and on the subgraph replaced by the urban renewal $M$ and $M'$ are related as in Figure~\ref{fig:induced_urban_shrink}.
\end{defn}

\begin{defn} [Induced perfect matching by shrinking of a 2-valent vertex]
Let $G'$ be a graph obtained from $G$ by shrinking of a 2-valent vertex.
We say that a perfect matching $M'$ of $G'$ is \defemph{induced from $M$ by shrinking of a 2-valent vertex} if $M'$ coincides with $M$ on all edges of $G$ not removed by the move.
The perfect matching $M'$ can be described in Figure~\ref{fig:induced_urban_shrink}.
\end{defn}

\begin{defn} [Induced perfect matching by a mutation]
Let $\mu_k(\G)$ is a graph obtained from $\G$ by a mutation at $k$.
The \defemph{induced perfect matching from $M$ by a mutation at $k$} is the perfect matching $\mu_k(M)$ induced from $M$ by an urban renewal at $k$ and then by shrinking of all 2-valent vertices.
\end{defn}

\begin{figure}
\includegraphics[scale=0.65]{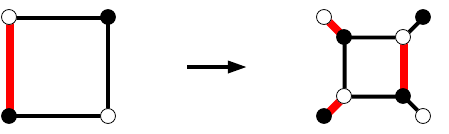} \hspace{40pt} \includegraphics[scale=.8]{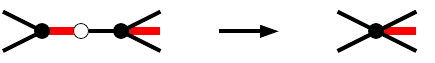}
\caption{The induced reference perfect matching from an urban renewal (left) and from shrinking of a 2-valent vertex (right).}
\label{fig:induced_urban_shrink}
\end{figure}

\begin{thm} \label{thm:hamiltonian_conserved}
Let $(\G,\mathbf{A})$ be a weighted bipartite torus graph with $n$ faces, $(\mu_k(G),\mathbf{A}')$ be obtained from $(\G,\mathbf{A})$ by a mutation at a contractible quadrilateral face $k$ of $\G$.
Let $M_0$ be a perfect matching of $\G$ containing exactly one side of $k$, 
$\mu_k(M_0)$ be induced from $M_0$ by the mutation. 
Then the Hamiltonians are invariant under the mutation:
\begin{align*}
H_{(i,j),\G,M_0}(A_1,\dots,A_n) = H_{(i,j),\mu_k(G),\mu_k(M_0)}(A_1',\dots,A_n ').
\end{align*}
\end{thm}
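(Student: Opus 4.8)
The plan is to establish a weight-preserving bijection between the relevant sets of perfect matchings on the two graphs, from which the equality of Hamiltonians follows term by term. Since a mutation $\mu_k$ is an urban renewal at $k$ followed by shrinking of all resulting 2-valent vertices, I would handle the two constituent moves separately and then compose. For the shrinking move, the bijection is essentially trivial: a 2-valent vertex lies on exactly two edges, exactly one of which is in any perfect matching, and contracting the vertex identifies these two edges with a single edge of $\G'$; this gives a bijection $M \leftrightarrow M'$ that preserves the multiset of faces adjacent to matched edges (the two edges at the 2-valent vertex are adjacent to the same pair of faces as the edge they become), hence preserves $w$ up to the same constant factor for all matchings, and preserves homology classes since the underlying topological curves $[M]-[M_0]$ and $[M']-[\mu_k(M_0)]$ are freely homotopic on $\T$. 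So it suffices to treat the urban renewal.

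First I would set up the urban renewal bijection. A perfect matching $M$ of $\G$ containing exactly one side of the quadrilateral $k$ restricts, near $k$, to one of four configurations (one for each side). Following the local picture in Figure~\ref{fig:induced_urban_shrink}, each such configuration corresponds to one or more configurations of the six new edges in $\G'$ that complete $M$ (restricted to the complement of the $k$-region) to a perfect matching of $\G'$; matchings of $\G$ that contain zero or two opposite sides of $k$ must also be accounted for, as these are exactly the ones producing the ``$A_iA_j + A_mA_n$'' term. Concretely, I would verify that summing $w(M')/w(\mu_k(M_0))$ over all $M'$ of $\G'$ that agree with a fixed external configuration reproduces $w(M)/w(M_0)$ (or the appropriate sum over the $M$ with that external configuration), using the weight rule of Definition~\ref{def:weight_matching} together with the mutation formula $A_k' = (A_iA_j+A_mA_n)/A_k$ from \eqref{eq:urban_on_weight}. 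This is the standard urban-renewal/matching identity; the bookkeeping is that the four outer edges of the old square contribute $A_k$-factors that are traded, via the square move, for the new inner edges' $A_k'$-factors.

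The key compatibility I must not lose track of is the homology class. Here the choice of reference matching $M_0$ and its induced image $\mu_k(M_0)$ does the work: because $M_0$ contains exactly one side of $k$, its local configuration is one of the four ``single-side'' types, and the induced $\mu_k(M_0)$ is prescribed by Figure~\ref{fig:induced_urban_shrink}. I would argue that for any $M$, the cycle $[M]-[M_0]$ and the cycle $[\mu_k(M)]-[\mu_k(M_0)]$ (or, for the matchings $M'$ fibering over a given $M$, the cycles $[M']-[\mu_k(M_0)]$) differ only inside the contractible disk around face $k$ — the mutation is supported there — and hence represent the same class in $H_1(\T,\Z)$. This uses crucially that $k$ is contractible, so that modifications confined to its neighborhood cannot change homology; it also uses that the superimposition difference is insensitive to the local re-routing because both $M$-part and $M_0$-part are modified consistently. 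Once this is checked, the bijection restricts, for each $(i,j)$, to a weight-preserving bijection between $\{M : [M]_{M_0} = (i,j)\}$ and $\{M' : [M']_{\mu_k(M_0)} = (i,j)\}$, and comparing \eqref{eq:hamiltonian} on both sides finishes the proof.

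\textbf{Main obstacle.} I expect the homology-invariance step to be the delicate one: the matchings $M$ of $\G$ do not biject cleanly with matchings $M'$ of $\G'$ (one $M$ can correspond to several $M'$ and vice versa, depending on the local type), so ``the cycle changes only inside the disk'' has to be stated at the level of the weighted sums, and one must check that every $M'$ appearing with nonzero weight over a given external data has its homology class pinned down by that external data alone. Handling the degenerate local configurations — where $M$ uses two opposite sides of $k$, or where after shrinking some of the six new edges collapse — without double-counting or sign errors in the weights is where the real care is needed; the rest is the routine urban-renewal computation.
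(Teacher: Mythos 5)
Your proposal follows essentially the same route as the paper's proof: reduce to the two elementary moves, handle shrinking of a 2-valent vertex by the obvious weight- and homology-preserving bijection, and handle the urban renewal by grouping matchings according to how many sides of face $k$ they use and matching the weighted sums via the exchange relation $A_k A_k' = A_iA_j + A_mA_n$, with the one-to-two and two-to-one local correspondences you flag as the main obstacle being exactly what the paper writes out explicitly. (A minor slip: the urban renewal replaces the four edges around $k$ by eight local edges, not six, but this does not affect the structure of the argument.)
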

\begin{proof}
It suffices to show that the Hamiltonians are invariant under an urban renewal and shrinking of a 2-valent vertex.

Consider shrinking of a 2-valent vertex.
Let $\G'$ be obtained from $\G$ by shrinking a 2-valent vertex $v$.
Let assume first that the 2-valent vertex is white. 
The other case can be treated in a similar manner.
Let $v$ be adjacent to faces $i,j$ and edges $e_1,e_2$.
Consider a perfect matching $M$ of $G$.
Since $M$ is a perfect matching, exactly one of $e_1, e_2$ must be in $M$.
Assume without loss of generality that $e_1\in M$.
From the following picture, there is a unique perfect matching $M'$ of $G'$ which is identical to $M$ except on $e_1$ and $e_2$.
Similarly, any perfect matching $M'$ of $G'$ has a unique perfect matching of $G$ such that they agree on $E(G')$.
Hence this map is a bijection between the perfect matchings of $G$ and of $G'$.
\begin{center}\includegraphics[scale=0.7]{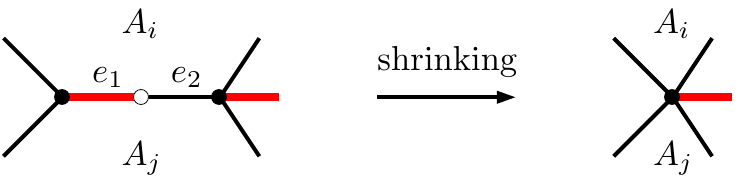}\end{center}

We also have 
\begin{align*}
w(M) = w(e_i) w(M') = (A_i A_j)\inv w(M').
\end{align*}
Similarly, for the reference perfect matching $M_0$ of $\G$, we have
\begin{align*}
w(M_0) = (A_i A_j)\inv w(M_0')
\end{align*}
regardless of whether $M_0$ contains $e_1$ or $e_2$.
Hence we have 
\begin{align*}
w(M)/w(M_0) = w(M')/w(M_0').
\end{align*}
Since the move does not change the homology class of a perfect matching  i.e. $[M]_{M_0} = [M']_{M_0'}$, from \eqref{eq:hamiltonian} we have that the Hamiltonians are invariant under the move:
\[ H_{(i,j),G,M_0}(A_1,\dots,A_n) = H_{(i,j),G',M_0'}(A_1,\dots,A_n). \]

For an urban renewal, let $(G',\mathbf{A}')$ be obtained from $(\G,\mathbf{A})$ by an urban renewal at the face $k$.
Let $M_0$ (resp. $M_0'$) be the reference perfect matching of $\G$ (resp. $G'$).
There are 4 involved edges before the move and 8 involved edges after the move.
Let $x\in\{A_i\}_{i=1}^n$ be the weight at the face $k$ of $\G$, $a,b,c,d\in\{A_i\}_{i=1}^n$ be weights at the four adjacent faces, and $x'\in \{A'_i\}_{i=1}^n$ be a weight at the face $k$ of $\G'$.
We then have
\[ x x' = ac+bd. \]
 
Without loss of generality, we assume that the edge adjacent to the faces whose weights are $x$ and $c$ is in $M_0$.
The following picture show $M_0$ and $M_0'$ on subgraph involved in the urban renewal.
\begin{align}\label{eq:toric_mutation}
\raisebox{-0.45\height}{\includegraphics[scale=0.8]{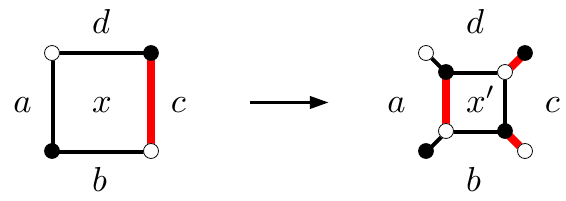}}.
\end{align}

Now we write $H_{(i,j),\G,M_0} = H_0 + H_1 + H_2$ where $H_0$ consists of all the contributions from perfect matchings containing no edges from the four sides of the face $k$, $H_1$ consists of the contributions from matchings containing one such edge, and $H_2$ consists of the contributions from matchings containing two such edges.
Similarly, we write $ H_{(i,j),G',M_0'} = H'_0 + H'_1 + H'_2$. We claim that $H_0 = H'_2$, $H_1 = H'_1$ and $H_2 = H'_0$.

In order to show $H_1 = H'_1$, we define a bijection between the matchings contributing terms to $H_1$ and the matchings contributing terms to $H'_1$.
The bijection maps a matching $M$ in $H_1$ to another matching $M'$ in $H'_1$ which differs from $M$ only on the edges involving in the urban renewal.
The bijection can be described as follows.
\begin{center}\includegraphics[scale=0.65]{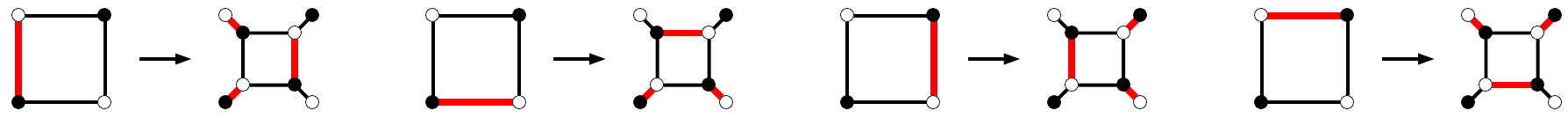}\end{center}

Since $M$ and $M'$ differ only on edges involved in the urban renewal, their contribution of the edges other than such edges near the face $k$ to $w(M)/w(M_0)$ and $w(M')/w(M_0')$ are the same.
Now we consider the contributions from the edges involved in the urban renewal.
Recalling the weights in \eqref{eq:toric_mutation}, in the first case, the contribution of such edges to $w(M)/w(M_0)$ is ${xc}/{xa}=c/a$, and the contribution to $w(M')/w(M_0')$ is $(x'abc^2d)/(x'a^2bcd) = c/a$.
Similarly, the contributions in the second case (resp. the third and the fourth case) from before and after the move are the same and equal to $c/b$ (resp. $1$ and $c/d$).
Hence $H_1 = H'_1$.

In order to show $H_0 = H'_2$, we define a one-to-two map from the matchings contributing terms to $H_0$ to the matchings contributing terms to $H'_2$.
The map can be described as the following.
\begin{center}\includegraphics[scale=0.65]{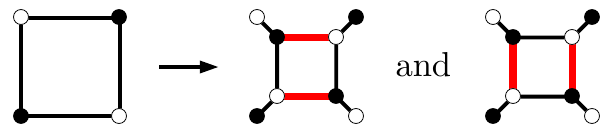}\end{center}
Let $M$ be a matching of $\G$ contributing to a term in $H_0$, $M'$ and $M''$ be the corresponding matchings of $\G'$ from the one-to-two map.
The contribution from the four edges around the face $k$ of $\G$ to $w(M)/w(M_0)$ is $xc$.
The contribution to $w(M')/w(M_0)$ (resp. $w(M'')/w(M_0)$) from the eight edges of $\G'$ resulting from the move is $x'abc^2d/(x')^2bd = ac^2/x'$ (resp. $x'abc^2d/(x')^2ac = bcd/x'$).
From \eqref{eq:toric_mutation}, $xc = ac^2/x'+bcd/x'$.
Hence $H_0 = H'_2$.

In order to show $H_2 = H'_0$, we define a two-to-one map from the matchings contributing terms to $H_2$ to the matchings contributing terms to $H'_0$ as the following.
\begin{center}\includegraphics[scale=0.65]{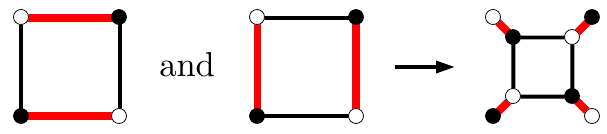}\end{center}
The contribution from the four edges around the face $k$ of $\G$ is
\[ \frac{xc}{x^2bd} + \frac{xc}{x^2ac} = \frac{c}{abcd}\frac{ac+bd}{x},\]
and the contribution from the eight edges of $\G'$ is
\[ \frac{x'abc^2d}{a^2b^2c^2d^2} = \frac{c}{abcd}x'. \]
From \eqref{eq:toric_mutation}, they are equal.
Hence $H_2 = H'_0$.
This conclude that $H_{(i,j),\G,M_0} = H_{(i,j),\G',M_0'}$.
\end{proof}

\section{\texorpdfstring{$A_r$ Q-systems}{Q-systems of type A}}
\label{sec:A}

In this section, we apply Theorem~\ref{thm:hamiltonian_conserved} to construct conserved quantities for $A_r$ Q-systems, and show that they coincide with the quantities shown in \cite{DFK10}.
We also give a Poisson structure to the phase space and show that the conserved quantities mutually Poisson-commute.


\subsection{\texorpdfstring{$A_r$ Q-systems and weighted graph mutations}{Q-systems of type A and weighted graph mutations}}

Consider the following quiver of an $A_r$ Q-system.
See Theorem~\ref{thm:qsys_cluster} for the detail.
\begin{center}
\includegraphics[scale=0.7]{quiver_Ar}
\end{center}

We recall that Theorem~\ref{thm:hamiltonian_conserved} requires every mutation to happen at a quadrilateral face.
This means every quiver mutation in the sequence $\mu$ in  \eqref{eq:ArQsys_sq_mutation} has to be at a vertex with exactly two incoming and two outgoing arrows.
In order to archive this, we add another vertex labeled by $0$ which will not be mutated, called \defemph{frozen vertex}, and assign a value of $1$ as its cluster variable.
According to the formula of a cluster mutation in Definition~\ref{def:mutation}, adding a frozen vertex with value $1$ does not effect the system.
The following is the resulting quiver $\Q$.
The vertex labeled by $0$ on the left and on the right are identified.
So it has four incident arrows.
\begin{center}
\includegraphics[scale=0.7]{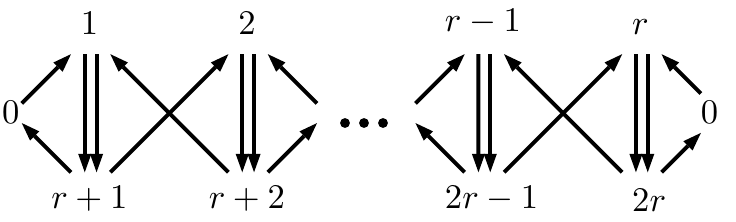}
\end{center}

The bipartite torus graph $\G$ associated with $\Q$ is depicted below.
Since every vertex of $\Q$ is of degree 4, every face of $\G$ has 4 sides.
We note that the face $0$ is not contractible.
It is indeed homotopy equivalent to a cylinder.
\begin{center}
\includegraphics[width=0.8\textwidth]{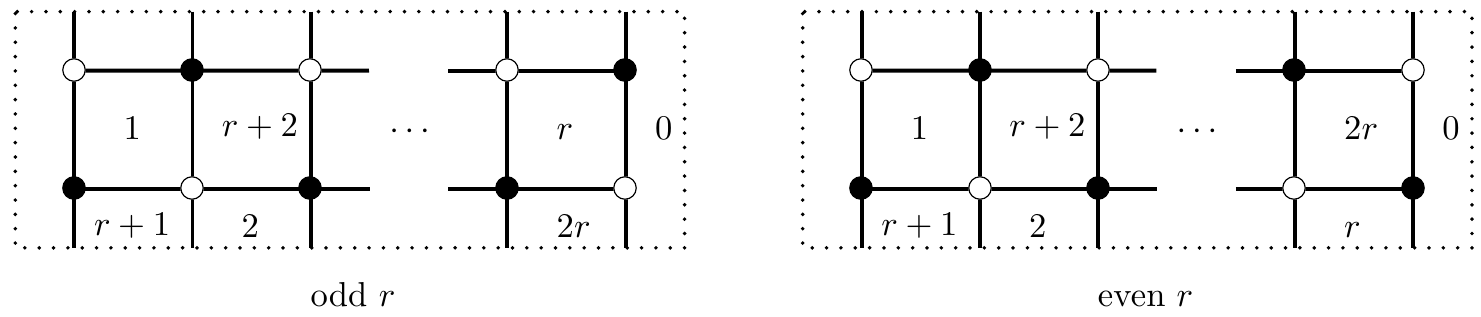}
\end{center}

Let $M_0$ be the perfect matching of $\G$ containing all vertical edges whose top vertex is black.
It can be depicted as the following.
\begin{center}
\includegraphics[scale=0.7]{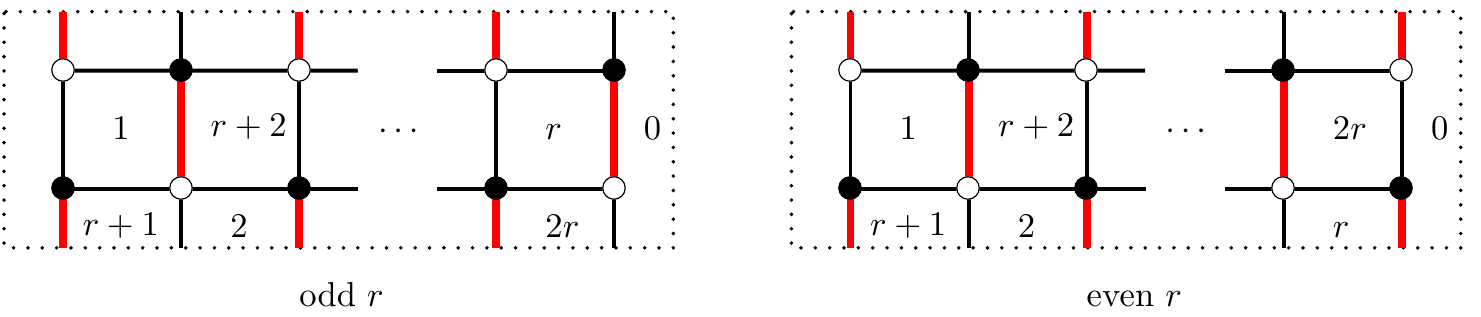}
\end{center}
We also let the weight at face $i$ be $A_i$.

\begin{thm} \label{thm:Ar_conserve}
Let $(\G,(A_{i}))$ be a weighted bipartite torus graph defined above.
Then the Hamiltonians $H_{(i,j),\G,M_0}(A_1,\dots,A_{2r})$ are conserved quantities of the $A_r$ Q-system dynamic $Q_{\a,k} \mapsto Q_{\a,k+1}$.
\end{thm}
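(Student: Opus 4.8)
The plan is to combine Theorem~\ref{thm:qsys_cluster} (realizing the $A_r$ Q-system as a sequence of cluster mutations), Theorem~\ref{thm:mutation_mutation} (identifying weighted-graph mutations with cluster mutations), and Theorem~\ref{thm:hamiltonian_conserved} (invariance of the Hamiltonians under a graph mutation). The key point is that the Q-system evolution $Q_{\a,k}\mapsto Q_{\a,k+1}$ is encoded, after adding the frozen vertex $0$, by the composite move $\sigma\mu = \sigma\prod_{i=1}^r\mu_i$ on the seed $(\mathbf{A}_k,\Q)$, where $\Q$ is the frozen-augmented $A_r$ quiver and $\mathbf{A}_k=(Q_{1,k},\dots,Q_{r,k},Q_{1,k+1},\dots,Q_{r,k+1},1)$. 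So it suffices to show each $\mu_i$ ($i\in[1,r]$) in this product is a legitimate weighted-graph mutation in the sense of Theorem~\ref{thm:mutation_mutation}, that the relabeling $\sigma$ is a symmetry of the weighted graph $(\G,M_0)$, and then invoke Theorem~\ref{thm:hamiltonian_conserved} repeatedly.

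First I would verify the hypotheses of Theorem~\ref{thm:mutation_mutation} for each face $i\in[1,r]$ of $\G$: each such face is a contractible quadrilateral, and reading around it the four adjacent faces alternate between two values (the opposite faces may coincide, which is explicitly allowed), with no two \emph{consecutive} adjacent faces equal; also $\Q_\G=\Q$ has no $1$-cycles. Since the $\mu_i$ for $i\in[1,r]$ involve pairwise non-adjacent faces (the odd-indexed vertices of $\Q$ form an independent set, up to the frozen vertex), they commute both as quiver mutations and as graph mutations, so the order is immaterial and each intermediate graph still satisfies the hypotheses. By Theorem~\ref{thm:mutation_mutation}, $\Q_{\mu_i(\G)}=\mu_i(\Q_\G)$ and the weights transform by the cluster rule \eqref{eq:urban_on_weight}, which by Theorem~\ref{thm:qsys_cluster} and \eqref{eq:ArQsys_sq_mutation} is exactly the Q-system recursion \eqref{eq:Ar_qsys}. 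Next I would check that the reference matching $M_0$ (all vertical edges with black top vertex) meets each face $i$ in exactly one side, so Theorem~\ref{thm:hamiltonian_conserved} applies and yields $H_{(i,j),\G,M_0}(\mathbf{A}_k)=H_{(i,j),\mu(\G),\mu(M_0)}(\mu(\mathbf{A}_k))$, where $\mu=\prod_{i=1}^r\mu_i$.

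The remaining step is to identify $\mu(\G)$ with $\G$ itself up to the relabeling $\sigma$. Concretely, after performing all $r$ urban renewals and shrinking the $2$-valent vertices, the resulting weighted bipartite torus graph is isomorphic to $(\G,M_0)$ via the graph automorphism realizing $\sigma(i)=i+r\bmod 2r$ (a horizontal shift by one column, fixing the frozen face $0$), and under this isomorphism the induced reference matching $\mu(M_0)$ maps to $M_0$. This is the geometric heart of the argument and I expect it to be the main obstacle: one must track carefully how the eight new edges produced by each urban renewal, after collapsing the newly created $2$-valent vertices, reassemble into a shifted copy of the original brick-wall pattern, and confirm that the distinguished matching transports correctly (this is precisely the content of Figure~\ref{fig:induced_urban_shrink} applied globally). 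Granting this, the homology labels $(i,j)$ are preserved by the isomorphism, so $H_{(i,j),\mu(\G),\mu(M_0)}\circ\sigma = H_{(i,j),\G,M_0}$, and combining with the invariance above gives
\begin{align*}
H_{(i,j),\G,M_0}(Q_{1,k},\dots,Q_{r,k+1},1) = H_{(i,j),\G,M_0}(Q_{1,k+1},\dots,Q_{r,k+2},1),
\end{align*}
i.e. the Hamiltonians are conserved under $Q_{\a,k}\mapsto Q_{\a,k+1}$. Since the frozen weight is identically $1$, we may drop it and regard $H_{(i,j),\G,M_0}$ as a function of $(A_1,\dots,A_{2r})=(Q_{1,k},\dots,Q_{r,k},Q_{1,k+1},\dots,Q_{r,k+1})$, which is the stated phase space $\X=(\C^*)^{2r}$.
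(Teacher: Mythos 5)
Your proposal is correct and follows essentially the same route as the paper: verify the hypotheses of Theorem~\ref{thm:mutation_mutation} and Theorem~\ref{thm:hamiltonian_conserved} at each of the faces $1,\dots,r$, apply them along $\mu=\prod_{i=1}^r\mu_i$, and then identify $\sigma\mu(\G)$ with $\G$ and $\sigma\mu(M_0)$ with $M_0$ so that the weights are simply shifted $k\to k+1$. The one step you defer ("granting this") — that after the $r$ mutations and the relabeling $\sigma$ the graph and the induced reference matching return to themselves via a translation of the torus — is exactly what the paper establishes by tracking the intermediate graphs and matchings in explicit figures (and it is a vertical, not horizontal, translation, though this does not affect the homology bookkeeping).
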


\begin{proof}
We want to show
\begin{align*}
H_{(i,j),\G,M_0}&(Q_{1,k},\dots,Q_{r,k},Q_{1,k+1},\dots,Q_{r,k+1}) \\
& = H_{(i,j),\G,M_0}(Q_{1,k+1},\dots,Q_{r,k+1},Q_{1,k+2},\dots,Q_{r,k+2})
\end{align*}
for $k\in \Z$.

Consider $(\G,(A_{i,k}))$ where $A_{i,k}$ are defined in equation~\eqref{eq:ArQsys_sq_mutation}.
By Theorem~\ref{thm:qsys_cluster}, there is a sequence of mutation $\mu =  \mu_r\cdots\mu_1$ and a relabeling $\sigma: i \mapsto i+r \bmod 2r$ such that $\sigma\mu$ sends $\Q$ to $\Q$, and the Q-system variables are shifted by $n\rightarrow n+1$.
We first claim that the conditions in Theorem~\ref{thm:mutation_mutation} and Theorem~\ref{thm:hamiltonian_conserved} hold along the sequence of mutations $\mu$:
\begin{itemize}
\item Each mutation happens at a contractible quadrilateral face.
\item The mutating face contains exactly one edges in the induced reference perfect matching from $M_0$.
\item Two adjacent faces of the mutating face are distinct, except possibly when they are opposite faces.
\end{itemize} 

Consider the first mutation $\mu_1$.
It is clear that face $1$ is a contractible quadrilateral face and contains exactly one edge of $M_0$.
Although its adjacent faces are not distinct, the non-distinct faces are opposite.
So the conditions hold.
The following graph on the left is the resulting graph after an urban renewal at face $1$.
The graph on the right is the result after shrinking all 2-valent vertices, hence it is $\mu_1(\G)$.
The induced reference perfect matching from $M_0$ are shown on the graphs.
The weight is now $(A_{1,k+2},A_{2,k},\dots,A_{2r,k})$.
\begin{center}
\includegraphics[scale=0.7]{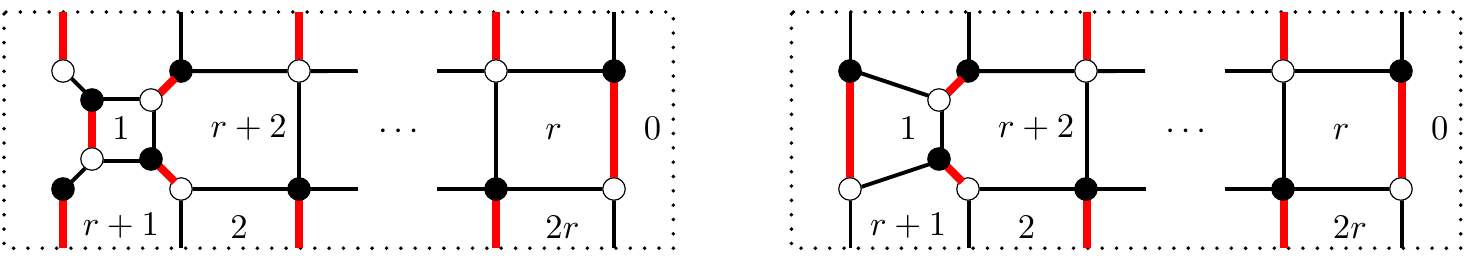}
\end{center}

We continue to the mutation $\mu_2$.
Again, we see that all the conditions are satisfied.
The following graphs are the result after an urban renewal at face $2$ and shrinking all 2-valent vertices.
The graph $\mu_2(\mu_1(G))$ is shown on the right with the weight $$(A_{1,k+2},A_{2,k+2},A_{3,k},\dots,A_{2r,k}).$$
\begin{center}
\includegraphics[scale=0.7]{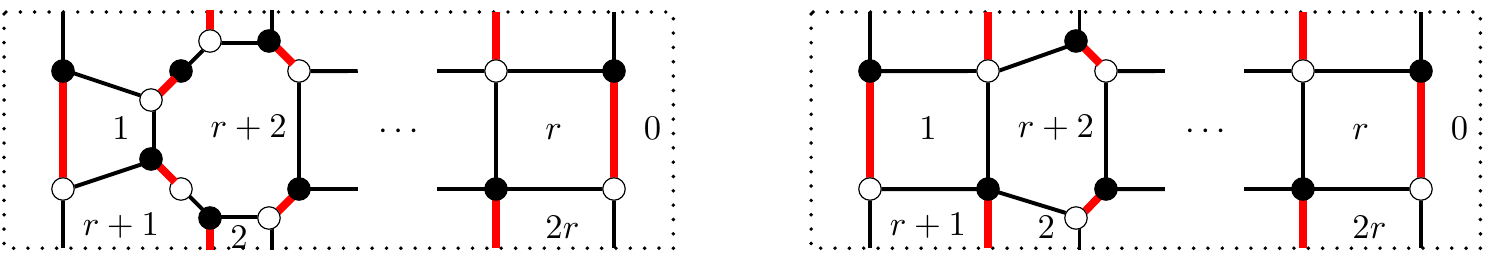}
\end{center}

Now it is easy to see that all the conditions hold at every mutation.
After applying every mutation along the sequence $\mu_1,\mu_2,\dots,\mu_r$, we have the following graph $\mu(\G)$ together with the induced reference perfect matching $\mu(M_0)$ with weight $$(A_{1,k+2},\dots,A_{r,k+2},A_{r+1,k},\dots,A_{2r,k}) = (Q_{1,k+2},\dots,Q_{r,k+2},Q_{1,k+1},\dots,Q_{r,k+1}).$$
\begin{center}
\includegraphics[scale=0.7]{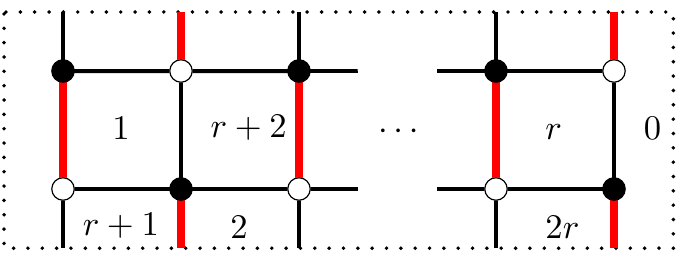}
\end{center} 

We then continue on to the relabeling $\sigma: i \mapsto i+r \bmod 2r.$
This gives the following graph on the left. 
By vertical translation, we obtain the graph on the right.
The weight after relabeling is $(Q_{1,k+1},\dots,Q_{r,k+1},Q_{1,k+2},\dots,Q_{r,k+2})$.
\begin{center}
\includegraphics[scale=0.7]{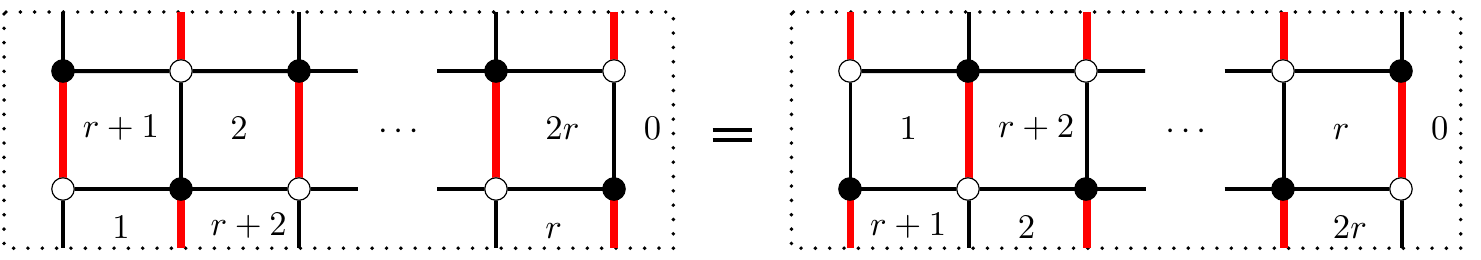}
\end{center}

We obtain back the original graph $\G$ and reference perfect matching $M_0$, while the weight is shifted from $k$ to $k+1$.
Since $,\sigma\mu(\G) = \G$ and $\sigma\mu(M_0) = M_0$, we have \[H_{(i,j),\sigma\mu(\G),\sigma\mu(M_0)} = H_{(i,j),\G,M_0}.\]
By Theorem~\ref{thm:mutation_mutation} and Theorem~\ref{thm:hamiltonian_conserved}, we get
\begin{align*}  
H_{(i,j),\G,M_0}&(Q_{1,k},\dots,Q_{r,k},Q_{1,k+1},\dots,Q_{r,k+1}) \\
&= H_{(i,j),\G,M_0}(Q_{1,k+1},\dots,Q_{r,k+1},Q_{1,k+2},\dots,Q_{r,k+2}).
\end{align*}
Hence $H_{(i,j),\G,M_0}$ are conserved quantities of the $A_r$ Q-system.
\end{proof}

\subsection{Partition function of hard particles}

In \cite{DFK10}, conserved quantities for $A_r$ Q-system are shown to be partition functions of hard particles on a certain weighted graph.
We will show that they coincide with the Hamiltonians $H_{(i,j),\G,M_0}$ computed in the previous section.

From Proposition~\ref{prop:matching_loop}, $[M]-[M_0]$ is always a product of non-intersecting simple loops of $\G$.
Given the reference perfect matching $M_0$ defined in the previous section, we then try to find such simple loops $\Gamma_i$ on $\G$.
We modify the construction in \cite{EFS12} and define $\Gamma_i$ as the following.

\begin{defn} \label{def:Ar_gamma}
For $i=1,2,\dots,2r+1$, we define $\Gamma_i$ to be the following loops.
\begin{center}
\includegraphics[width=0.8\textwidth]{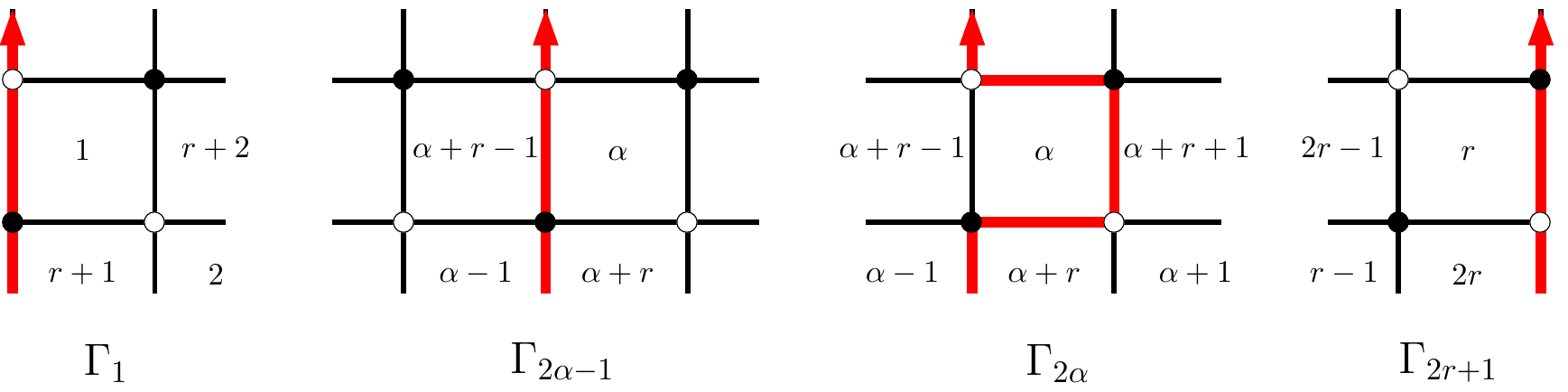}
\end{center}
\end{defn}

Let $\gamma_i := w(\Gamma_i)$ be the weight of $\Gamma_i$ on $\G$, see Definition~\ref{def:weight_path}.
We note that when $A_i = A_{i,k}$ defined in equation \eqref{eq:ArQsys_sq_mutation}, we get the same expressions as in \cite{DFK10}:
\[ \gamma_{2\alpha-1} =  \frac{Q_{\alpha-1,k}Q_{\alpha,k+1}}{Q_{\alpha,k}Q_{\alpha-1,k+1}}, \quad  \gamma_{2\alpha} = \frac{Q_{\alpha-1,k}Q_{\alpha+1,k+1}}{Q_{\alpha,k}Q_{\alpha,k+1}},  \]
where we assume that $Q_{0,k}:=1$ and $Q_{r+1,k}:=1$ for all $k\in\Z$.


\begin{thm} \label{thm:Ar_matching_gamma}
Let $M$ be a perfect matching of $\G$. 
Then $[M]-[M_0]$ is a nonintersecting collection of $\Gamma_i$'s.
Furthermore, every nonintersecting collection of $\Gamma_i$'s is $[M]-[M_0]$ for a unique perfect matching $M$ of $\G$.
\end{thm}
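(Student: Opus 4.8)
The plan is to establish the statement as a bijection between perfect matchings of $\G$ containing no two adjacent vertical edges — equivalently, all perfect matchings of $\G$ — and nonintersecting collections of the loops $\Gamma_i$, by a purely local analysis. First I would recall from Proposition~\ref{prop:matching_loop} that for any perfect matching $M$, the superimposition $[M]-[M_0]$ is automatically a product of nonintersecting simple loops on $\G$; so the only thing to prove is that these simple loops are precisely the $\Gamma_i$ of Definition~\ref{def:Ar_gamma}, and that the correspondence $M \mapsto [M]-[M_0]$ is a bijection onto nonintersecting subcollections. The key observation is that $[M]-[M_0]$ records exactly the set of vertices where $M$ differs from $M_0$: since $M_0$ is the matching using all vertical edges with black top vertex, knowing which edges of $\G$ lie in $M\setminus M_0$ and $M_0\setminus M$ determines $M$ completely, and conversely a nonintersecting collection of loops in $[M]-[M_0]$ determines $M$ as ``$M_0$ with the loop edges toggled.'' So injectivity is immediate once we know the loop data determines $M$.

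The heart of the argument is the local classification. I would look at a single column (or unit cell) of the graph $\G$ as drawn in Section~\ref{sec:A}, list the few ways a perfect matching can behave there given that $M_0$ occupies the vertical edges, and check that every connected component of $[M]-[M_0]$ passing through that region agrees locally with exactly one of the pictures defining $\Gamma_1,\dots,\Gamma_{2r+1}$. Concretely, each $\Gamma_i$ is a simple loop that, away from a localized ``detour'' around faces near index $i$, runs along the vertical edges of $M_0$; the detour is forced by the bipartite structure (each black and white vertex must stay matched) and there are exactly $2r+1$ possible detours, matching the $2r+1$ loops in the definition. Since distinct $\Gamma_i$ are nonintersecting exactly when their detours do not overlap, and since $M_0$ restricted to the complement of the detours is the ``only'' way to complete the matching there, a nonintersecting collection of $\Gamma_i$'s can be toggled against $M_0$ to produce a well-defined edge set, which one checks is again a perfect matching (every vertex remains incident to exactly one edge because each $\Gamma_i$ flips an even local configuration). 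This gives the inverse map and hence surjectivity.

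I expect the main obstacle to be the bookkeeping in the local classification: one must verify carefully that these specific $2r+1$ loop pictures exhaust all connected components that can arise in $[M]-[M_0]$ for \emph{any} perfect matching $M$ — in particular that no ``longer'' or ``composite'' simple loop can occur — and that the boundary/wraparound behavior at the non-contractible face $0$ (which is homotopy equivalent to a cylinder) is handled correctly, since $\Gamma_{2r+1}$ is presumably the loop responsible for the nontrivial homology class in the vertical direction. The cleanest way to organize this is to argue column by column: in each column, $M$ differs from $M_0$ either not at all or along a single ``zig-zag'' segment, and the global loop is the concatenation of these segments; matching segments end-to-end forces the shape to be one of the $\Gamma_i$. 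Once the local dictionary is pinned down, gluing the local bijections into a global one is routine, and the uniqueness of $M$ for a given collection follows from the toggling description together with the fact that $M_0$ is itself a perfect matching.
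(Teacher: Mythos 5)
Your proposal follows essentially the same route as the paper's proof: a local classification of the possible configurations of $[M]-[M_0]$ at the black and white vertices (which forces every connected component to be one of the loops $\Gamma_1,\dots,\Gamma_{2r+1}$), combined with the symmetric-difference ``toggle'' construction $M=(E\setminus M_0)\cup(M_0\setminus E)$ for the inverse, which works precisely because along each $\Gamma_i$ one of the two edges at every visited vertex lies in $M_0$. One minor correction to your mental picture that does not affect the argument: all of the $\Gamma_i$ (not only $\Gamma_{2r+1}$) are homologically nontrivial vertical loops, which is why a nonintersecting collection of $k$ of them has homology class $(0,k)$.
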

\begin{proof}
Consider all possible local pictures at a white vertex $v$ of $\G$.
Since $M$ (resp. $M_0$) is a perfect matching of $\G$, there is exactly one edge in $M$ (resp. $M_0$) which touches the vertex $v$.
If the two edges (one from $M$ and one from $M_0$) are the same, no loop passes through $v$.
If they are different, we have the following possibilities:
\begin{center}
\includegraphics[scale=0.6]{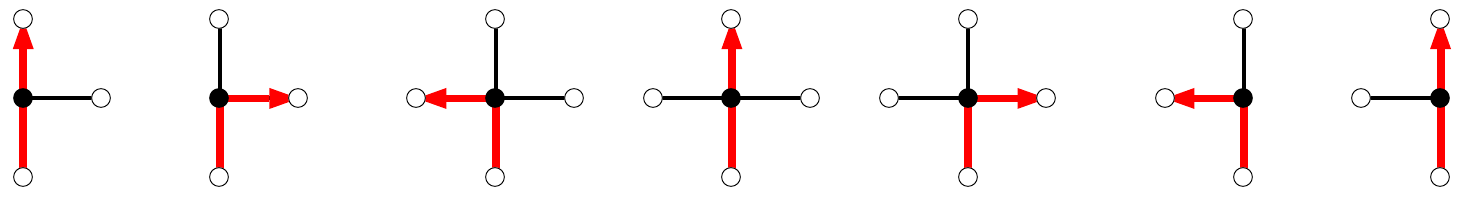}
\end{center}
Similarly, we have the following possibilities for a white vertex.
\begin{center}
\includegraphics[scale=0.6]{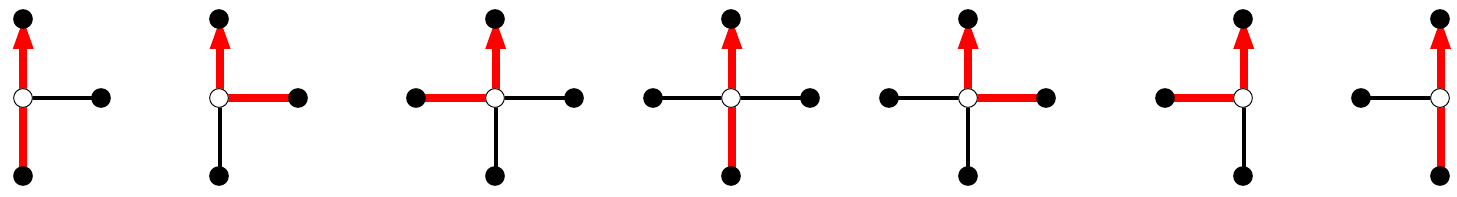}
\end{center}
From all the possibilities at black and white vertices, $[M]-[M_0]$ consists of nonintersecting loops $\Gamma_i$'s.

On the other hand, let $E$ be the edge set of a nonintersecting collection of $\Gamma_i$'s.
Notice from the pictures in Definition~\ref{def:Ar_gamma} that if there is a loop passes through a vertex $v$, one of the two incident edges belongs to $M_0$.
Then 
\[ M = (E\setminus M_0) \cup (M_0\setminus E) \]
is the unique perfect matching such that $[M]-[M_0] = \prod_{i\in I}\Gamma_i.$
\end{proof}

\begin{defn} [{\cite[Section 3.3]{DFK10}}]
Let $G_r$ be the following graph with $2r+1$ vertices indexed by the index set $[1,2r+1]$.
\begin{center}
\includegraphics[scale=0.7]{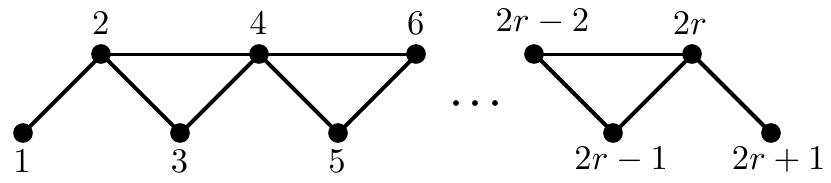}
\end{center}
\end{defn}

It is easy to see that $\Gamma_i$ intersects $\Gamma_j$ if and only if the vertices $i$ and $j$ are connected in $G_r$.
So for any subset $I\subseteq [1,2r+1]$, the loops in $\{ \Gamma_i \mid i\in I\}$ are pairwise disjoint if and only if $I$ is a subset of pairwise nonadjacent vertices of $G_r$, a.k.a. a \defemph{hard particle configuration} on $G_r$.
Hence, nonintersecting collections of $\Gamma_i$'s are parametrized by subsets of pairwise nonadjacent vertices of $G_r$.
Also, nonintersecting collections of $\Gamma_i$'s of size $n$ are in bijection with $n$-subsets of pairwise nonadjacent vertices of $G_r$.

By Theorem~\ref{thm:Ar_matching_gamma}, the possible homology classes of $[M]-[M_0]$ are $(0,k)$ for $k\in[0,r+1].$
Let 
\[ H_k := H_{(0,k),\G,M_0}. \]
It is easy to see that $H_0 = H_{r+1} = 1$.
By an interpretation of $[M]-[M_0]$ as a hard particle configuration on $G_r$, we get the following theorem and conclude that the Hamiltonians coincide with the conserved quantities computed in \cite{DFK10}.

\begin{thm} \label{thm:hamiltonian_gamma_Ar}
Let $k\in[1,r]$. Then
\[ H_{k}(A_1,\dots,A_{2r}) = \sum_{|I|=k}\prod_{i\in I} \gamma_i \]
where the sum runs over all $k$-subsets $I$ of pairwise nonadjacent vertices of $G_r$.
\end{thm}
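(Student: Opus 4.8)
The plan is to combine the two combinatorial facts already established — Theorem~\ref{thm:Ar_matching_gamma}, which identifies perfect matchings $M$ of $\G$ with nonintersecting collections of the loops $\Gamma_i$, and the observation that such nonintersecting collections are exactly the hard particle configurations on $G_r$ — with a bookkeeping of weights and homology classes. First I would recall that by definition $H_k = H_{(0,k),\G,M_0} = \sum_{M:[M]_{M_0}=(0,k)} w(M)/w(M_0)$, and that $w(M)/w(M_0) = w([M]-[M_0])$ by the identity noted just after Definition~\ref{def:weight_path}. So the entire statement reduces to understanding $w([M]-[M_0])$ in terms of the $\gamma_i$.

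The key step is the multiplicativity of the loop weight: if $[M]-[M_0] = \prod_{i\in I}\Gamma_i$ for a hard particle configuration $I$ (which is exactly the content of Theorem~\ref{thm:Ar_matching_gamma}), then $w([M]-[M_0]) = \prod_{i\in I} w(\Gamma_i) = \prod_{i\in I}\gamma_i$. This follows because the $\Gamma_i$ in the collection are pairwise non-intersecting (edge-disjoint), so the product of loops is just their disjoint union with no cancellation of edges, and $w$ of a disjoint union of paths is the product of the weights by Definition~\ref{def:weight_path}. I would state this explicitly as the one genuine verification. Next I would match up homology classes: each $\Gamma_i$ as drawn in Definition~\ref{def:Ar_gamma} is a vertical loop of homology class $(0,1)$, so a collection of $k$ of them has class $(0,k)$; hence $[M]_{M_0} = (0,k)$ if and only if the corresponding hard particle configuration $I$ has $|I| = k$. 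Combining, the matchings contributing to $H_k$ are precisely indexed by $k$-element hard particle configurations $I$ on $G_r$, and each contributes $\prod_{i\in I}\gamma_i$, giving $H_k = \sum_{|I|=k}\prod_{i\in I}\gamma_i$.

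The main obstacle — really the only thing requiring care rather than citation — is confirming that every $\Gamma_i$ indeed carries homology class $(0,1)$ and that no $\Gamma_i$ is null-homotopic or horizontal, since the formula's index $k$ must coincide with the cardinality $|I|$; this is a direct inspection of the pictures in Definition~\ref{def:Ar_gamma}, using the convention fixed before Definition~\ref{def:hom_of_M} that a vertical loop going bottom to top has class $(0,1)$. I would also note in passing that one should check there is no subtlety when $I=\emptyset$ (giving $M=M_0$, class $(0,0)$, weight $1$, consistent with $H_0=1$) or when $I$ is a maximal configuration of size $r+1$ (giving $H_{r+1}=1$), matching the remark that $H_0=H_{r+1}=1$; these boundary cases fall outside the range $k\in[1,r]$ but reassure that the bijection is the right one. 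With these checks in place the theorem follows immediately, so the proof will be short.
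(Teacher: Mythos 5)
Your proposal is correct and takes essentially the same route as the paper, which derives the theorem directly from Theorem~\ref{thm:Ar_matching_gamma} together with the parametrization of nonintersecting collections of $\Gamma_i$'s by hard particle configurations on $G_r$. The weight multiplicativity over disjoint loops and the check that each $\Gamma_i$ has homology class $(0,1)$, which you spell out, are exactly the bookkeeping the paper leaves implicit.
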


\begin{remark}
In \cite{GP16} the Q-system of type $A_r$ is identified with the T-system of type $A_r\otimes \hat{A}_1$.
Our Hamiltonians coincide with the ``Goncharov-Kenyon Hamiltonians" in \cite[Section 3.2]{GP16}, which defined in terms of domino tilings on a cylinder.
Our choice of a reference perfect matching corresponds to the minimal-height domino tiling.
Our homology class $(0,k)$ corresponds to $k$ ``hula hoops" (noncontractible cycles) that appear in a ``double dimer model" between a domino tiling and the minimal domino tiling.
\end{remark}


\subsection{Poisson-commutation}
\label{sec:poisson_A}

Let $C$ be the Cartan matrix of type $A_r$.
The signed adjacency matrix of the quiver of $A_r$ Q-system is
\begin{align*}
B = \begin{bsmallmatrix}C-C^T & C^T \\ -C & 0\end{bsmallmatrix} = \begin{bsmallmatrix}0& C \\ -C & 0\end{bsmallmatrix}.
\end{align*}
Recall that the phase space $\X$ has coordinates $(A_1,\dots,A_{2r})$.
We then define a Poisson bracket on the algebra $\mathcal{O}(\X)$ of functions on $\X$ by
\[ \{ A_i , A_j \} = \Omega_{ij} A_i A_j  \quad (i,j\in [1,2r])\]
where the coefficient matrix $\Omega$ is defined by
\begin{align*}
\Omega = (B^T)^{-1} = -B^{-1} = \begin{bsmallmatrix} 0 & -C \\ C & 0 \end{bsmallmatrix}^{-1} = \begin{bsmallmatrix} 0 & C^{-1} \\ -C^{-1} & 0 \end{bsmallmatrix}.
\end{align*}

Using this Poisson bracket, the bracket $\{ \gamma_i, \gamma_j\}$ is in a nice form.

\begin{prop} \label{prop:bracket_gamma}
Let $i \sim j$ denote vertices $i$ and $j$ are connected in $G_r$.
Then 
\[ \{\gamma_i,\gamma_j\} = \epsilon(\Gamma_i,\Gamma_j)\gamma_i \gamma_j\] where
\begin{align*}
\epsilon(\Gamma_i,\Gamma_j) = 
	\begin{cases}
	1, & \text{if }i<j \text{ and }i \sim j,\\
	-1, & \text{if }i>j \text{ and }i \sim j. \\
	0, & \text{otherwise.}\\
	\end{cases}
\end{align*}
\end{prop}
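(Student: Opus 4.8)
The plan is to compute the Poisson bracket $\{\gamma_i,\gamma_j\}$ directly from the log-linear structure of both the weights $\gamma_i = w(\Gamma_i)$ and the Poisson bracket $\{A_i,A_j\} = \Omega_{ij}A_iA_j$. Since each $\gamma_i$ is a Laurent monomial in the face weights $A_1,\dots,A_{2r}$, write $\gamma_i = \prod_\ell A_\ell^{v^{(i)}_\ell}$ for an integer exponent vector $v^{(i)} \in \Z^{2r}$ read off from Definition~\ref{def:Ar_gamma} (each loop $\Gamma_i$ contributes $(A_mA_n)^{\pm 1}$ for the faces $m,n$ flanking each of its edges, with sign according to the orientation as in Definition~\ref{def:weight_path}). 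Then the log-linearity of the bracket gives $\{\gamma_i,\gamma_j\} = \left(\sum_{\ell,m} v^{(i)}_\ell \Omega_{\ell m} v^{(j)}_m\right)\gamma_i\gamma_j = \bigl((v^{(i)})^T \Omega\, v^{(j)}\bigr)\gamma_i\gamma_j$, so $\epsilon(\Gamma_i,\Gamma_j) = (v^{(i)})^T\Omega\, v^{(j)}$, and everything reduces to evaluating this bilinear pairing.

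The key computational step is to identify $(v^{(i)})^T\Omega\, v^{(j)}$ with the geometric intersection data of the loops. First I would read off the exponent vectors: using the explicit formulas $\gamma_{2\a-1} = Q_{\a-1,k}Q_{\a,k+1}/(Q_{\a,k}Q_{\a-1,k+1})$ and $\gamma_{2\a} = Q_{\a-1,k}Q_{\a+1,k+1}/(Q_{\a,k}Q_{\a,k+1})$ together with the identification $A_{i,k}$ of equation~\eqref{eq:ArQsys_sq_mutation}, one gets clean vectors supported on a few consecutive coordinates in each block of the $\begin{bsmallmatrix}0 & C^{-1}\\ -C^{-1} & 0\end{bsmallmatrix}$ structure of $\Omega$. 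Because $\Omega$ is block-anti-diagonal, the pairing $(v^{(i)})^T\Omega\, v^{(j)}$ only couples the "$Q_{\cdot,k}$-part" of one vector with the "$Q_{\cdot,k+1}$-part" of the other, and $C^{-1}$ for type $A_r$ has the well-known explicit entries $(C^{-1})_{ab} = \min(a,b) - ab/(r+1)$. I would then case-split on the four combinations $(2\a-1,2\b-1)$, $(2\a-1,2\b)$, $(2\a,2\b-1)$, $(2\a,2\b)$ and show the pairing is $0$ unless the indices are adjacent in $G_r$ — i.e., unless $|i-j|=1$, or $\{i,j\}$ is one of the "rungs" $\{2\a-1,2\a\}$ type edges dictated by the ladder $G_r$ — in which case it is $\pm 1$ with the sign being $\sgn(j-i)$.

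An alternative, cleaner route that I would pursue in parallel is the homological one: the antisymmetric form $\epsilon$ should coincide (up to the sign conventions fixed by the labeling) with the intersection pairing of the cycles $\Gamma_i,\Gamma_j$ on the torus, analogously to the Goncharov–Kenyon setup where $\{w_L,w_{L'}\} = \langle L,L'\rangle w_L w_{L'}$ for the intersection form $\langle\cdot,\cdot\rangle$ on the twisted ribbon graph. In fact since $\Omega = -B^{-1}$ and $B$ is (essentially) the intersection matrix on face-loops, the pairing $(v^{(i)})^T\Omega v^{(j)}$ literally computes the intersection number of $\Gamma_i$ and $\Gamma_j$ once one expresses $\Gamma_i$ in the basis of face-loops; loops $\Gamma_i,\Gamma_j$ that are geometrically disjoint on $\G$ (equivalently $i\not\sim j$ in $G_r$) pair to zero, and adjacent ones cross exactly once, contributing $\pm 1$ according to orientation. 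I would present the direct monomial computation as the main argument and mention the intersection-pairing interpretation as the conceptual reason.

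The main obstacle I expect is bookkeeping of signs: getting the orientation conventions of Definition~\ref{def:weight_path} to align consistently with the chosen labeling $i<j$ in the statement, so that the answer comes out exactly $+1$ for $i<j$ rather than $-1$. This is where a careful, fixed choice of the local pictures (say, fixing that $\Gamma_1$ runs in a definite direction and each subsequent $\Gamma_i$ inherits its orientation from the figure) is essential; once one boundary case $(i,j)=(1,2)$ is pinned down by hand, the rest follow by the translational symmetry of the ladder. The only other mild subtlety is handling the boundary loops $\Gamma_1$ and $\Gamma_{2r+1}$, where the conventions $Q_{0,k}=Q_{r+1,k}=1$ make some coordinates of the exponent vectors vanish; I would check these two endpoints separately to confirm they still obey the stated adjacency-and-sign rule.
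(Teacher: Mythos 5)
Your proposal is correct, and the computation it outlines does go through, but it takes a genuinely different route from the paper. The paper first proves a general compatibility statement (Proposition~\ref{prop:A_poisson_pairing}): for \emph{any} two loops on the graph, the bracket of their weights equals the intersection pairing of Definition~\ref{def:int_pairing} times the product of the weights; this is verified only on generators (the face loops $Y_j$ together with $\Gamma_1$) via the identities $B B^{-1} B = B$ and $\Omega B = -I$, and then Proposition~\ref{prop:bracket_gamma} follows by simply reading the intersection numbers $\epsilon(\Gamma_i,\Gamma_j)$ off the pictures of the loops. Your route instead computes $\{\gamma_i,\gamma_j\}$ head-on: each $\gamma_i$ is a Laurent monomial with exponent vector built from $D_\alpha := e_{\alpha-1}-e_\alpha$ (with $e_0=e_{r+1}=0$), the block form of $\Omega$ couples only the $k$-block with the $(k{+}1)$-block, and with $(C^{-1})_{ab}=\min(a,b)-ab/(r+1)$ one finds $D_\alpha^T C^{-1} D_\beta = \delta_{\alpha\beta}-\tfrac{1}{r+1}$, from which the four parity cases give exactly $0$ or $\pm 1$ with sign $\sgn(j-i)$, boundary loops included. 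What the paper's approach buys is reusability and conceptual transparency: Proposition~\ref{prop:A_poisson_pairing} is invoked again (e.g., in the second proof of Theorem~\ref{thm:hamiltonian_commute_A} for loops of the form $[M]-[M_0]$), and no inverse Cartan matrix ever appears; what your approach buys is a self-contained linear-algebra verification that avoids setting up the twisted-ribbon-graph pairing and the claim that $Y_1,\dots,Y_{2r},\Gamma_1$ generate all loops. One small caution: the nonzero pairings occur not only for $|i-j|=1$ but also for the even--even pairs $(2\alpha,2\alpha+2)$, which are indeed edges of $G_r$; your phrase ``$|i-j|=1$ or the rungs'' under-describes this, so in writing up the case analysis make sure the pattern you obtain is checked against the actual adjacency of $G_r$, which it does match.
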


In order to prove Proposition~\ref{prop:bracket_gamma}, we introduce an intersection pairing on a twisted ribbon graph from $\G$.
We refer to \cite{GK13} for more details.
The pairing is defined for oriented loops on $\G$.
It is skew-symmetric and can be described combinatorially as follows.

\begin{defn}
\label{def:int_pairing}
For a pair of oriented loops $W$ and $W'$ on $\G$, the \defemph{intersection pairing} $\epsilon(W,W')$ is the sum of all the following contributions over the shared vertices of $W$ and $W'$. 
\begin{center}\includegraphics[scale = 1]{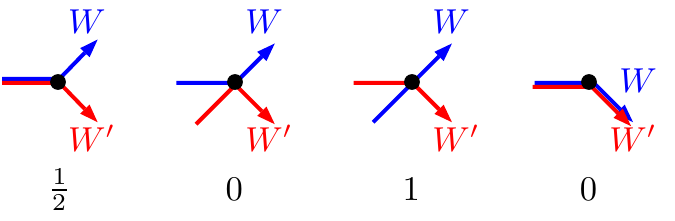}\end{center}
The sign of the contribution is switched (between plus and minus) each time the vertex coloring is switched or the orientation of a path is reversed.
\end{defn}

\begin{prop}
\label{prop:A_poisson_pairing}
For any loops $W$ and $W'$ on $\G$ with weight $w$ and $w'$, respectively.
We have
\begin{align} \label{eq:Poisson_pairing}
\{ w, w' \} = \epsilon(W,W')w w'. 
\end{align}
\end{prop}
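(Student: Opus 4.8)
\textbf{Proof proposal for Proposition~\ref{prop:A_poisson_pairing}.}

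The plan is to reduce the statement to a bilinear identity in the exponent lattice and then verify it by a local, vertex-by-vertex computation. First I would fix the homomorphism that records, for an oriented loop $W$ on $\G$, its ``face-winding vector'' $v(W)\in\Z^{2r}$ (together with whatever the analogue of the extra homology generators is here), so that $w(W) = \prod_i A_i^{v(W)_i}$ up to a sign/monomial that does not affect the bracket; concretely $v(W)_i$ counts, with sign according to orientation, how the loop $W$ passes the face $i$, which from Definition~\ref{def:weight_path} is exactly the exponent of $A_i$ in $w(W)$. Since the Poisson bracket is defined by $\{A_i,A_j\} = \Omega_{ij}A_iA_j$ and is a biderivation, for any two monomials one has $\{w,w'\} = \bigl(\sum_{i,j}\Omega_{ij}v(W)_i v(W')_j\bigr) w w' = \bigl(v(W)^T\Omega\, v(W')\bigr)ww'$. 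So the proposition is equivalent to the purely combinatorial identity
\begin{align}\label{eq:reduction}
v(W)^T \Omega\, v(W') = \epsilon(W,W').
\end{align}

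The second step is to prove \eqref{eq:reduction}. Both sides are bilinear and skew-symmetric in $(W,W')$: the left side because $\Omega = -\Omega^T$, the right side by construction in Definition~\ref{def:int_pairing}. Moreover both sides are additive under the product of loops (the left because $v$ is a homomorphism on the loop group; the right because the intersection pairing is a sum of local vertex contributions and the face-winding decomposition is local). Hence it suffices to check \eqref{eq:reduction} on a spanning set of the loop lattice. Here I would exploit the specific graph $\G$ built for the $A_r$ Q-system: the loops $\Gamma_i$ of Definition~\ref{def:Ar_gamma} together with the two noncontractible cycles generating $H_1(\T,\Z)$ span the relevant lattice, and for $\Gamma_i$ the vector $v(\Gamma_i)$ is read off directly from the picture. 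Then \eqref{eq:reduction} becomes the assertion that $v(\Gamma_i)^T\Omega\,v(\Gamma_j)$ equals the value $\epsilon(\Gamma_i,\Gamma_j)$ recorded in Proposition~\ref{prop:bracket_gamma}, namely $+1$ if $i<j$ and $i\sim j$ in $G_r$, $-1$ if $i>j$ and $i\sim j$, and $0$ otherwise. Since $\Omega = \begin{bsmallmatrix}0 & C^{-1}\\ -C^{-1} & 0\end{bsmallmatrix}$ with $C$ the Cartan matrix of $A_r$, this is a finite linear-algebra check once the vectors $v(\Gamma_i)$ are written out; the zero cases follow because $\Gamma_i$ and $\Gamma_j$ are then disjoint, so they contribute nothing to $\epsilon$ and (one checks) their winding vectors pair to zero under $\Omega$.

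Alternatively — and this is probably the cleaner route — I would avoid computing $\Omega$ entry-wise by recalling the relation $\Omega = (B^T)^{-1}$ and the definition of $\tau$-coordinates (Definition~\ref{def:can_tran}): the $y$-variables $y_j = \prod_i A_i^{B_{ij}}$ are dual to the $A_i$ under the form $\Omega$, and in the Goncharov--Kenyon picture the $y$-variable at face $j$ is the weight of the counterclockwise loop $W_j$ around face $j$. Then $v(W_j)^T \Omega\, v(W')$ picks out a single coordinate of $v(W')$, and matching it against the local vertex contributions in Definition~\ref{def:int_pairing} is immediate. I would then invoke bilinearity/additivity to extend from the generators $W_j$ (plus the homology generators) to all loops, which finishes the proof.

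The main obstacle is bookkeeping rather than conceptual: one must make the local conventions in Definition~\ref{def:int_pairing} (the sign flips under recoloring a vertex or reversing an orientation) match exactly the signs in the exponents of $w(W)$ coming from Definition~\ref{def:weight_path} (where a black-to-white edge contributes $(A_iA_j)^{-1}$ and a white-to-black edge contributes $A_iA_j$), and to verify that at a shared vertex the four local configurations drawn in Definition~\ref{def:int_pairing} reproduce the entries of $\Omega$ coming from the quiver $\Q_\G$. Getting these signs consistent — in particular checking that the skew-symmetrization built into $\Omega = (B^T)^{-1}$ is the one that appears — is the only delicate point; everything else is the routine reduction to a bilinear form via the derivation property of the bracket.
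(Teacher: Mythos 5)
Your preferred route is essentially the paper's own proof: reduce the identity, via multiplicativity of loop weights and bilinearity of both sides, to a generating set of loops consisting of the face loops $Y_j$ plus one extra loop ($\Gamma_1$ in the paper), then compute $\{\log y_i,\log y_j\}=B_{ij}$ and the extra pairing directly from $y_j=\prod_i A_i^{B_{ij}}$ and $\Omega=-B^{-1}$ and match against $\epsilon(Y_i,Y_j)=B_{ij}$, exactly as the paper does. The only small inaccuracy is in your first (non-preferred) route: on this particular graph $\G$ every loop has vertical homology class, so the lattice is spanned by the face loops together with a single loop such as $\Gamma_1$ rather than by the $\Gamma_i$ plus ``two noncontractible cycles,'' but this does not affect the argument you actually advocate.
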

\begin{proof}
For $j\in[1,2r]$ we let $Y_j$ be the counterclockwise loop around the face $j$ of $G$.
Since $Y_1,\dots,Y_{2r}$ together with the oriented loop $\Gamma_1$ (Definition~\ref{def:Ar_gamma}) generate all the oriented loops on $\G$, we only need to show \eqref{eq:Poisson_pairing} on such generators.
It is easy to see that
\[ \epsilon(Y_i,Y_j) = B_{ij}, \quad \epsilon(\Gamma_1, Y_j) = \delta_{j,1} - \delta_{j,r+1}. \]
Let $y_j$ be the weight of $Y_j$. We need to show that
\begin{align*}
\{ y_i, y_j \} = B_{ij} y_i y_j, \quad \{ \gamma_1 , y_j \} = (\delta_{j,1} - \delta_{j,r+1}) \gamma_1 y_j.
\end{align*}
This is equivalent to
\begin{align}
\{ \log(y_i), \log(y_j) \} &= B_{ij}, \label{eq:bracket_yy} \\
\{ \log(\gamma_1), \log(y_j) \} &= \delta_{j,1} - \delta_{j,r+1}. \label{eq:bracket_gy}
\end{align}

From the graph $\G$ we have 
\[ y_j = \prod_{i=1}^{2r} A_i^{B_{ij}},\quad \gamma_1 = \frac{A_{r+1}}{A_{1}}. \]
We also note that $\{ \log(A_i), \log(A_j) \} = \Omega_{ij}$ and $\Omega = -B\inv$. 
To show \eqref{eq:bracket_yy}, we consider
\begin{align*} \{\log( y_i), \log( y_j) \}
&= \Big\{ \sum_k B_{ki} \log(A_k) , \sum_\ell B_{\ell j}\log(A_\ell) \Big\} \\
&= \sum_{k,\ell} B_{ki}B_{\ell j} \{ \log( A_k), \log( A_\ell) \} \\
&= \sum_{k,\ell} B_{ki}B_{\ell j} \Omega_{k\ell} = \sum_{k,\ell} (-B_{ik}) (- (B\inv)_{k \ell} ) (B_{\ell j}) \\
&= (B B\inv B)_{ij} = B_{ij}.
\end{align*}
To show \eqref{eq:bracket_gy}, we consider
\begin{align*} \{ \log(\gamma_1), \log(y_j) \} 
&= \big\{ \log(A_{r+1})-\log(A_1), \sum_i B_{ij} \log(A_i) \big\} \\
&= \sum_i  \Big( \{ \log(A_{r+1}),\log(A_i) \} - \{ \log(A_1), \log(A_i) \} \Big)B_{ij} \\
&= \sum_i  (\Omega_{r+1,i} - \Omega_{1,i})B_{ij} = (\Omega B)_{r+1,j} - (\Omega B)_{1,j} \\
&= -I_{r+1,j} + I_{1,j} = -\delta_{r+1,j} + \delta_{1,j}.
\end{align*}
This proved \eqref{eq:bracket_yy} and \eqref{eq:bracket_gy}.
Hence we proved the proposition.
\end{proof}

\begin{proof}[Proof of Proposition~\ref{prop:bracket_gamma}]
It is easy to see that
\[ \epsilon(\Gamma_i,\Gamma_j) = 
	\begin{cases}
	1, & \text{if }i<j \text{ and }i \sim j,\\
	-1, & \text{if }i>j \text{ and }i \sim j. \\
	0, & \text{otherwise.}\\
	\end{cases} \] 
From Proposition~\ref{prop:A_poisson_pairing}, we have $\{\gamma_i,\gamma_j\} = \epsilon(\Gamma_i,\Gamma_j) \gamma_i\gamma_j$.
This finished the proof.
\end{proof}

Recall that a Hamiltonian can be written as
\[ H_{k} = \sum_{|I|=k}\prod_{i\in I} \gamma_i \]
where the sum runs over all $n$-subsets $I\subseteq [1,2r+1]$ of pairwise nonadjacent vertices of $G_r$.
The following lemma gives an involution which will be use to cancel out terms in a computation of $\{ H_i, H_j\}$.

\begin{lemma} \label{lem:odd_length}
Let $i_1<j_1<i_2<j_2<\dots<i_k<j_k<i_{k+1}$ be a connected sequence of vertices of $G_r$ of odd length.
Then 
\[ \{ \gamma_{i_1}\gamma_{i_2}\dots \gamma_{i_{k+1}}, \gamma_{j_1}\gamma_{j_2}\dots \gamma_{j_{k}} \} = 0. \]
\end{lemma}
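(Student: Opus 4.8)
The plan is to expand the Poisson bracket bilinearly and exploit the structure of $\epsilon(\Gamma_i,\Gamma_j)$ from Proposition~\ref{prop:bracket_gamma}. Since $\{\gamma_a,\gamma_b\}=\epsilon(\Gamma_a,\Gamma_b)\gamma_a\gamma_b$ and the bracket is a derivation in each slot, we get
\[
\{\gamma_{i_1}\cdots\gamma_{i_{k+1}},\ \gamma_{j_1}\cdots\gamma_{j_k}\}
= \Big(\sum_{a=1}^{k+1}\sum_{b=1}^{k}\epsilon(\Gamma_{i_a},\Gamma_{j_b})\Big)\,\gamma_{i_1}\cdots\gamma_{i_{k+1}}\,\gamma_{j_1}\cdots\gamma_{j_k}.
\]
So the whole claim reduces to the purely combinatorial identity $\sum_{a,b}\epsilon(\Gamma_{i_a},\Gamma_{j_b})=0$. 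Here the relevant structure is that $i_1<j_1<i_2<j_2<\dots<i_k<j_k<i_{k+1}$ is a \emph{connected} path in $G_r$, meaning consecutive vertices in this ordered list are adjacent in $G_r$, and — crucially — since $G_r$ is a ladder graph, any two vertices that are adjacent are consecutive in this list (a path in $G_r$ has no "chords"). Hence $\epsilon(\Gamma_{i_a},\Gamma_{j_b})$ is nonzero only for the consecutive pairs, and by the sign rule in Proposition~\ref{prop:bracket_gamma} (value $+1$ when the smaller index is first) we have $\epsilon(\Gamma_{i_a},\Gamma_{j_a})=+1$ (since $i_a<j_a$ and they are consecutive, hence adjacent) and $\epsilon(\Gamma_{i_{a+1}},\Gamma_{j_a})=-1$ (since $j_a<i_{a+1}$ and they are consecutive/adjacent).

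The main step is then the telescoping count. For each $a\in[1,k]$ the vertex $j_a$ contributes exactly two nonzero terms to the double sum: $\epsilon(\Gamma_{i_a},\Gamma_{j_a})=+1$ from its predecessor $i_a$, and $\epsilon(\Gamma_{i_{a+1}},\Gamma_{j_a})=-1$ from its successor $i_{a+1}$. Every other pair $(\Gamma_{i_a},\Gamma_{j_b})$ gives $0$ because $i_a$ and $j_b$ are non-consecutive in the path and therefore (by the ladder structure) non-adjacent in $G_r$. Summing over $b=1,\dots,k$ the contributions cancel in pairs: $\sum_{a,b}\epsilon(\Gamma_{i_a},\Gamma_{j_b})=\sum_{a=1}^{k}(+1-1)=0$. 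This yields the lemma.

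The part requiring the most care is the justification that a path $i_1<j_1<\cdots<i_{k+1}$ in $G_r$ has no edges other than the consecutive ones — i.e., that $\Gamma_p$ and $\Gamma_q$ are disjoint whenever $p,q$ are non-consecutive along the path. This is exactly the statement that $G_r$ restricted to the vertex set of this path is just the path itself, with no chords; it follows from the explicit description of $G_r$ (each vertex has degree at most $3$, and more to the point the adjacency pattern of the ladder forces a geodesic-type ordering), combined with the remark preceding the lemma that $\Gamma_i$ meets $\Gamma_j$ iff $i\sim j$ in $G_r$. I would spell this out by noting that in the ladder $G_r$ on $[1,2r+1]$, vertex $i$ is adjacent only to $i\pm1$ and possibly one "rung" neighbor, and a connected increasing-then-interleaved sequence as in the hypothesis necessarily lists neighbors consecutively; alternatively one observes directly from Definition~\ref{def:Ar_gamma} that $\Gamma_p$ and $\Gamma_q$ share a vertex only when $|p-q|$ is small in the prescribed sense. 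Once this "no chords" fact is in hand, the sign bookkeeping above is routine and the lemma follows.
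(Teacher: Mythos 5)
Your proof is correct and follows essentially the same route as the paper: expand the bracket bilinearly to reduce to $\sum_{a,b}\epsilon(\Gamma_{i_a},\Gamma_{j_b})$, note that only consecutive pairs along the chain contribute, each giving $+1$ from $(\Gamma_{i_a},\Gamma_{j_a})$ and $-1$ from $(\Gamma_{i_{a+1}},\Gamma_{j_a})$, so the sum telescopes to $k-k=0$. Your extra care about ``no chords'' makes explicit what the paper leaves implicit, and it is easily justified since edges of $G_r$ only join vertices whose labels differ by at most $2$ while non-consecutive cross pairs in the interleaved ordering differ by at least $3$ (your parenthetical ``degree at most $3$'' is slightly off --- interior even vertices have degree $4$ --- but this does not affect the argument).
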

\begin{proof}
We consider
\begin{align*}
 \{ \log(\gamma_{i_1}\dots \gamma_{i_{k+1}}), \log(\gamma_{j_1}\dots \gamma_{j_{k}})\} 
= \sum_{a=1}^{k+1}\sum_{b=1}^k \{ \log\gamma_{i_a}, \log\gamma_{j_b} \}  = \sum_{a=1}^{k+1}\sum_{b=1}^k \epsilon(\Gamma_{i_a},\Gamma_{j_b}).
\end{align*}
From Proposition \ref{prop:bracket_gamma}, we have
\begin{align*}
\sum_{a=1}^{k+1}\sum_{b=1}^k \epsilon(\Gamma_{i_a},\Gamma_{j_b}) 
= \sum_{a=1}^k  \epsilon(\Gamma_{i_a},\Gamma_{j_a})  + \sum_{b=1}^k  \epsilon(\Gamma_{i_{b+1}},\Gamma_{j_b}) = k - k = 0.
\end{align*}
The $2k$ contributions on the right hand side of the first equality can be view graphically as the following.
\begin{center}
\includegraphics[scale=1]{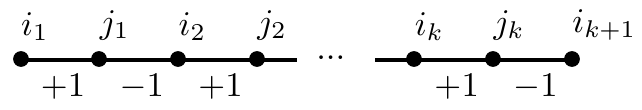}
\end{center} 
Hence $\{ \gamma_{i_1}\gamma_{i_2}\dots \gamma_{i_{k+1}}, \gamma_{j_1}\gamma_{j_2}\dots \gamma_{j_{k}} \} = 0.$
\end{proof}

\begin{thm} \label{thm:hamiltonian_commute_A}
The Hamiltonians Poisson-commute.
\end{thm}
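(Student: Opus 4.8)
The plan is to expand the Poisson bracket $\{H_i, H_j\}$ directly using the monomial expansion $H_k = \sum_{|I|=k}\prod_{i\in I}\gamma_i$ from Theorem~\ref{thm:hamiltonian_gamma_Ar} and the Leibniz rule, reducing everything to the pairwise brackets $\{\gamma_a,\gamma_b\}=\epsilon(\Gamma_a,\Gamma_b)\gamma_a\gamma_b$ computed in Proposition~\ref{prop:bracket_gamma}. Writing $\gamma^I:=\prod_{a\in I}\gamma_a$, the Leibniz rule gives
\[ \{H_i,H_j\} = \sum_{|I|=i,\,|J|=j} \Big(\sum_{a\in I,\,b\in J}\epsilon(\Gamma_a,\Gamma_b)\Big)\gamma^I\gamma^J, \]
so the goal is to show that all these terms cancel in pairs. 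Since $\epsilon(\Gamma_a,\Gamma_b)$ is nonzero only when $a$ and $b$ are adjacent in the ladder graph $G_r$, the coefficient of $\gamma^I\gamma^J$ depends only on the way the two hard-particle configurations $I$ and $J$ interleave along $G_r$: decomposing $I\cup J$ into maximal connected runs (intervals) of $G_r$, only runs that alternate between $I$ and $J$ contribute, and a run contributes $0$ if it has even length and $\pm1$ if it has odd length (exactly as in the count in Lemma~\ref{lem:odd_length}).

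First I would make precise this ``maximal alternating run'' decomposition: given $I$ with $|I|=i$ and $J$ with $|J|=j$ (both subsets of pairwise nonadjacent vertices of $G_r$, so within each run consecutive selected vertices alternate $I,J,I,J,\dots$), partition the support into maximal intervals; the total coefficient $\sum_{a\in I,b\in J}\epsilon(\Gamma_a,\Gamma_b)$ splits as a sum over these runs, and by Lemma~\ref{lem:odd_length} each odd-length alternating run $i_1<j_1<\dots<j_k<i_{k+1}$ (more $I$'s than $J$'s) contributes $0$, and symmetrically each odd run with more $J$'s contributes $0$; an even-length run with equal numbers of $I$ and $J$ vertices contributes $\pm1$ depending on whether it starts with an $I$-vertex or a $J$-vertex. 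Hence the only surviving pairs $(I,J)$ are those all of whose runs have even length.

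Next I would define the cancelling involution on such surviving pairs. For a pair $(I,J)$ all of whose maximal runs have even length $2m_s$, pick a distinguished run — say the leftmost one — and ``shift'' it: within that run, which is an alternating sequence of $2m_s$ consecutive-in-pattern vertices, swap the roles of $I$ and $J$, i.e. replace the run $x_1<x_2<\dots<x_{2m_s}$ (with $x_{\rm odd}\in I, x_{\rm even}\in J$, say) by the same vertex set but with $x_{\rm odd}\in J, x_{\rm even}\in I$. This produces a new pair $(I',J')$ with $|I'|=|I|$, $|J'|=|J|$, the same underlying monomial $\gamma^I\gamma^J=\gamma^{I'}\gamma^{J'}$, the same run decomposition, hence again all runs even, and the involution is its own inverse. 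The key point is that flipping the orientation/parity of one even run flips the sign of that run's coefficient (it toggles between ``starts with $I$'' and ``starts with $J$'', whose contributions are $+1$ and $-1$) while leaving all other runs' contributions unchanged; therefore the coefficients of the two monomials $\gamma^I\gamma^J$ in $\{H_i,H_j\}$ coming from $(I,J)$ and $(I',J')$ are negatives of each other and cancel. Since every surviving pair is matched with a distinct surviving pair of opposite sign, $\{H_i,H_j\}=0$.

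The main obstacle I anticipate is purely bookkeeping: verifying carefully that the ``shift one even run'' operation is a well-defined fixed-point-free involution on the set of surviving pairs and that it exactly negates the coefficient. One must check that $(I',J')$ is still a valid pair of hard-particle configurations (it is, since the vertex set and hence the adjacency structure within the run is unchanged), that no run can have length zero or be left unpaired, and that the per-run sign computation underlying Lemma~\ref{lem:odd_length} genuinely gives $+1$ versus $-1$ for the two parities of even runs — this is the same vertex-contribution picture as in the figure accompanying that lemma, just summed over a run of even rather than odd length. Once that sign lemma for even runs is stated and proved (a one-line extension of Lemma~\ref{lem:odd_length}'s computation, giving $\sum\epsilon = +1$ or $-1$), the cancellation and hence $\{H_i,H_j\}=0$ follow immediately.
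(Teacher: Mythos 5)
Your overall strategy is the same as the paper's (expand $\{H_m,H_n\}$ by Leibniz, reduce to the per-chain contributions via Proposition~\ref{prop:bracket_gamma}, use that odd alternating chains contribute $0$ and even chains $\pm1$, then cancel by an involution swapping the roles of $I$ and $J$), but your involution is defined incorrectly and the cancellation you claim does not hold. The coefficient of $\gamma^I\gamma^J$ is the \emph{sum} of the $\pm1$ contributions over \emph{all} even maximal chains of $I\cup J$; flipping the roles of $I$ and $J$ in only the leftmost even chain negates just that one summand and leaves the others unchanged, so the coefficients of $(I,J)$ and $(I',J')$ are in general not negatives of each other. Concretely, for $r\geq 4$ take $I=\{1,5\}$, $J=\{2,6,9\}$: the maximal chains are $\{1,2\}$, $\{5,6\}$ (both even, each contributing $+1$) and $\{9\}$ (odd, contributing $0$), so the coefficient is $+2$; your flip of the leftmost chain gives $I'=\{2,5\}$, $J'=\{1,6,9\}$ with coefficient $-1+1+0=0$, and the orbit sum is $2\neq 0$. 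The paper avoids this by swapping $I$ and $J$ within \emph{every} even-length maximal chain simultaneously: then each even chain's contribution is negated, hence the entire coefficient changes sign, while the monomial $\gamma^I\gamma^J$ and the cardinalities $|I|,|J|$ are preserved; the fixed points are exactly the pairs all of whose chains are odd, and those have coefficient $0$ by Lemma~\ref{lem:odd_length}. (In the example above, the correct partner of $(\{1,5\},\{2,6,9\})$ is $(\{2,6\},\{1,5,9\})$, whose coefficient is $-2$.)

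A secondary error: you assert that the only surviving pairs are those all of whose chains are even. That is false --- a pair with a mixture of odd and even chains (such as the example above) has odd chains contributing $0$ but even chains contributing $\pm1$, so its coefficient can be nonzero; restricting your involution to all-even pairs therefore leaves uncancelled terms even apart from the sign problem. Both defects are repaired at once by the flip-all-even-chains involution, which is defined on all pairs $(I,J)$, is manifestly an involution, preserves validity of the hard-particle configurations and the monomial, and is sign-reversing on the full coefficient. Your per-chain sign computation (even chain contributes $+1$ or $-1$ according to whether it starts with an $I$- or $J$-vertex) is correct and is exactly what makes the corrected involution work.
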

\begin{proof}
This proof is adapted from the proof of \cite[Theorem 3.7]{GK13}.
Let $m,n\in[1,r]$.
We would like to show that $\{H_m,H_n\}=0$.

Consider an involution $\iota:(I,J)\mapsto(I',J')$ on the set of pairs of index subsets $(I,J)$ where $I,J\subseteq [1,2r+1]$ are subsets of pairwise nonadjacent vertices of $G_r$, $|I|=m$ and $|J|=n$.
For a pair $(I,J)$, we define $(I',J')$ by the following steps.

First, we think of $I$ and $J$ as subsets of $V(G_r)$, and then plot all the elements of $I$ and $J$ on the graph $G_r$.
For each even length maximal chain of vertices, we have an alternating sequence between elements of $I$ and elements of $J$.
Then $I'$ (resp. $J'$) is obtained from $I$ (resp. $J$) by swapping all the elements in every even-length maximal chain.
As a result, $|I'|=|I|$ and $|J'|=|J|$.
So, both $I$ and $I'$ (resp. $J$ and $J'$) contribute terms to $H_m$ (resp. $H_n$).
It is also clear that $\iota$ is an involution.

From Lemma~\ref{lem:odd_length}, the vertices in odd-length maximal chains contribute nothing to the bracket $\{ \prod_{i\in I}\gamma_i,\prod_{j\in J}\gamma_j \}$.
So
\[
\big\{\log( \prod_{i\in I}\gamma_i),\log(\prod_{j\in J}\gamma_j) \big\} 
= \sum_{C} \big\{ \log(\prod_{i\in C\cap I}\gamma_i),\log(\prod_{j\in C\cap J}\gamma_j) \big\} 
\]
where the sum runs over all even-length maximal chain $C\subseteq I\cup J$.
By the construction, $C\cap I' = C\cap J$ and $C\cap J'=C\cap I$.
Hence, 
\[ \big\{ \prod_{i\in I}\gamma_i,\prod_{j\in J}\gamma_j \big\} = -\big\{ \prod_{i\in I'}\gamma_i,\prod_{j\in J'}\gamma_j \big\}. \]
A fixed point of $\iota$ is a pair $(I,J)$ where all maximal chains are odd.
We have that $$\{ \prod_{i\in I}\gamma_i,\prod_{j\in J}\gamma_j \}= 0.$$
Hence, $\{ H_m,H_n\} = 0 $.
\end{proof}

\begin{ex}
Let $r=1$, $I=\{1,4\}$ and $J=\{-1,2,5\}$.
The following picture show three maximal chains of $I\cup J$: $\{ -1\}$, $\{1,2\}$ and $\{4,5\}$.
\begin{center}
\includegraphics[scale=1]{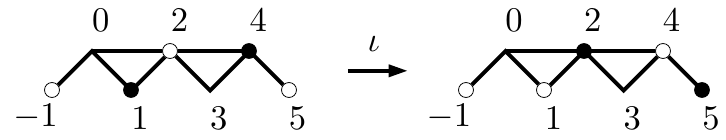}
\end{center}
So $I'=\{2,5\}$ and $J' = \{ -1,1,4\}$.
The black (resp. white) dots are elements of $I$ and $I'$ (resp. $J$ and $J'$).
\end{ex}


\subsection{Another proof of Theorem~\ref{thm:hamiltonian_commute_A}}

The proof is based on the proof of \cite[Theorem 3.7]{GK13}.
We denote by $\overline{\G}$ the following bipartite torus graph.
It differs from $\G$ by two extra edges.
\begin{center}
\includegraphics[scale=.8]{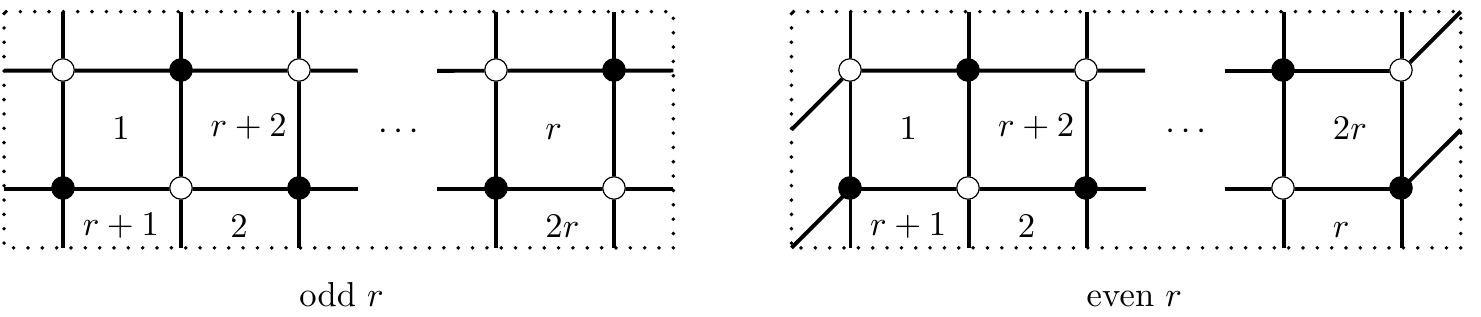}
\end{center}
Using notations in Section~\ref{sec:bipartitegraph}, it can be obtained from an integral convex polygon with edge vectors: 
\begin{itemize}
\item $e_1=(1,\frac{r+1}{2})$, $e_2=(-1,\frac{r+1}{2})$, $e_3=(-1,-\frac{r+1}{2})$, $e_4=(1,-\frac{r+1}{2})$ when $r$ is odd
\item $e_1=(1,\frac{r+2}{2})$, $e_2 = (-1,\frac{r}{2})$, $e_3=(-1,-\frac{r+2}{2})$, $e_4=(1,-\frac{r}{2})$ when $r$ is even.
\end{itemize}
The integral convex polygon can be depicted as the following.
\begin{center}
\includegraphics[scale=.7]{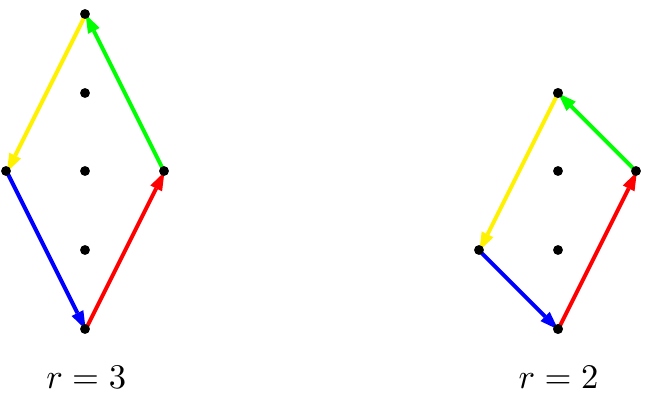}
\end{center}
The following pictures are $\overline{G}$ with oriented loops.
\begin{center}
\includegraphics[scale=.8]{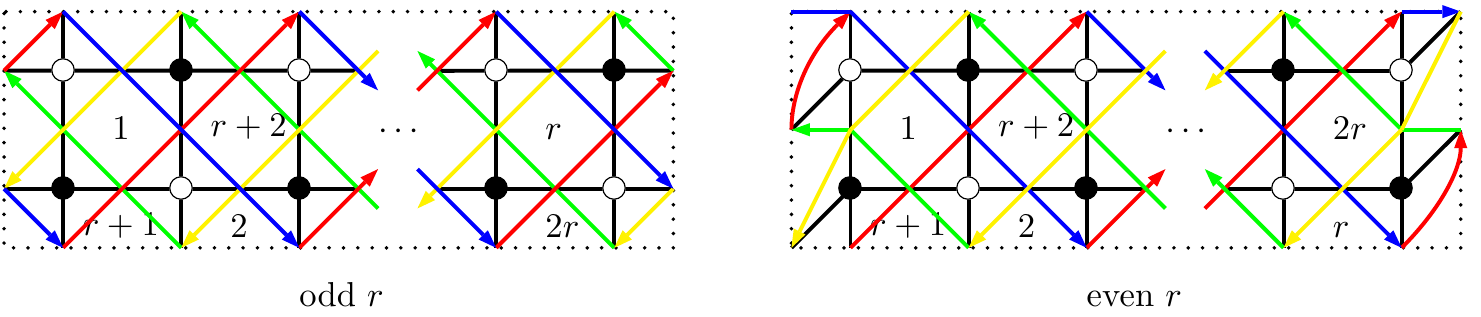}
\end{center}

We define a reference perfect matching from a sequence $e_1,e_2,e_3,e_4$ according to the construction in \ref{sec:hamiltonian_GK}.
It coincides with the perfect matching $M_0$ defined at the beginning of Section~\ref{sec:A}.
We denote by $M_0$ this perfect matching of $\overline{\G}$.

Let $\overline{\G}$ be a bipartite torus graph obtained from an integral polygon with edge vectors $e_1,e_2,\dots,e_n$ in Section~\ref{sec:hamiltonian_GK}.
Let $v$ be a vertex in a loop $\Gamma$.
Let $\varphi_{M_0} : E(\overline{\G})\rightarrow \{0,1\}$ defined by $\varphi_{M_0}(e) = 1$ if and only if $e\in M_0$.
Define $b_v(\Gamma,M_0)$ by
\[ b_v(\Gamma,M_0) := \sum_{e\in R_v} \varphi_{M_0}(e) - \sum_{e\in L_v} \varphi_{M_0}(e) \in\Z \]
where $R_v$ (resp. $L_v$) be the set of all edges incident to $v$ which are on the right (resp. left) of the loop $\Gamma$ (not including edges in $\Gamma$).

The following lemma says that for any homologically nontrivial loop $\Gamma$ on $\overline{\G}$, the number of edges in $M_0$ incident to $\Gamma$ on the left is equal to the number of edges in $M_0$ incident to $\Gamma$ on the right.

\begin{lemma} [{\cite[Lemma 3.9]{GK13}}]
\label{lem:bending}
Let $M_0$ be the reference perfect matching obtained from edge vectors. (See Section~\ref{sec:hamiltonian_GK}.)
For any simple topologically nontrivial loop $\Gamma$ on $\overline{\G}$, 
\[ \sum_{v\in \Gamma} b_v(\Gamma,\varphi_{M_0}) = 0. \]
\end{lemma}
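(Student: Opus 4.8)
The plan is to prove Lemma~\ref{lem:bending} by a deformation/continuity argument on the loop $\Gamma$, reducing the general case to the case of a straight loop. First I would observe that the quantity $\sum_{v\in\Gamma}b_v(\Gamma,M_0)$ is a topological-combinatorial invariant: it depends on $\Gamma$ only through its homotopy class together with the way $\Gamma$ weaves through the vertices of $\overline{\G}$. So I would set up a notion of an elementary move on $\Gamma$ — pushing the loop across a single face of $\overline{\G}$, or across a single vertex — and check that each such move leaves $\sum_{v\in\Gamma}b_v$ unchanged. The key local computation is the following: when $\Gamma$ is pushed across one vertex $v_0$ (so $v_0$ moves from one side of $\Gamma$ to the other, and possibly a neighboring vertex is absorbed into or released from $\Gamma$), the change in $b_{v_0}$ is exactly cancelled by the change in the $b_v$ of the affected neighbors. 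This cancellation is forced by the fact that $M_0$ is a perfect matching: each vertex is incident to exactly one edge of $M_0$, so the bookkeeping of "$M_0$-edges on the left minus $M_0$-edges on the right" is conserved when the boundary is locally deformed, because the single $M_0$-edge at each vertex either stays on the same side or its side-count is transferred to an adjacent vertex.

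Next I would use these invariance-under-deformation statements to reduce to a normal form. Since $\overline{\G}$ is obtained from an integral convex polygon with the explicit edge vectors listed above, every homology class in $H_1(\T,\Z)$ is represented by a family of parallel "straight" loops — for instance, in the picture of $\overline{\G}$ with oriented loops shown above, one sees explicit horizontal and vertical (and diagonal) loops realizing the generators of $H_1$. Any simple topologically nontrivial loop $\Gamma$ can be homotoped, through the elementary moves above, to one of these standard straight loops in the same homology class (using that $\overline{\G}$ triangulates/quadrangulates the torus compatibly with the polygon combinatorics). So it suffices to verify $\sum_{v\in\Gamma}b_v(\Gamma,M_0)=0$ when $\Gamma$ is one of these straight reference loops.

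For a straight loop $\Gamma$, I would compute $\sum_{v\in\Gamma}b_v$ directly from the explicit description of $M_0$. Recall $M_0$ consists of all vertical edges whose top vertex is black. For a straight loop, the vertices along $\Gamma$ come in a periodic pattern, alternating black and white, and the edges of $M_0$ incident to $\Gamma$ occur in a correspondingly periodic pattern on the two sides. The claim is that in one fundamental period, the number of $M_0$-edges hanging off $\Gamma$ on the right equals the number on the left — equivalently, the $M_0$-edges incident to $\Gamma$ but not lying in $\Gamma$ are symmetric about $\Gamma$ per period. This is a finite check depending only on the slope of the loop and on the local structure of $\overline{\G}$ around black versus white vertices; the alternating black/white pattern along $\Gamma$ and the "top vertex is black" rule defining $M_0$ make the left and right contributions match up period by period. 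Summing over periods gives $0$.

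The main obstacle I expect is the bookkeeping in the elementary-move invariance step: one has to enumerate the local configurations of $\Gamma$ passing through a vertex (the cases already drawn in the proof of Theorem~\ref{thm:Ar_matching_gamma} for the relevant local pictures) and verify, in each, that the move preserves $\sum b_v$, being careful about the sign conventions for "left" versus "right" of an oriented loop and about vertices that enter or leave $\Gamma$ during the move. Once that local invariance is nailed down, the reduction to straight loops and the final periodic count are comparatively routine. Alternatively, if the deformation argument becomes cumbersome, one can instead argue cohomologically: $b_v(\Gamma,M_0)$ assembles into a $1$-cochain (or a function on the loop) whose total sum around $\Gamma$ pairs the homology class of $\Gamma$ against the class of $M_0$ relative to a fixed reference, and this pairing vanishes precisely because $M_0$ is built from the edge-vector data of the polygon in the balanced way prescribed in Section~\ref{sec:hamiltonian_GK} — this is the route taken in \cite[Lemma 3.9]{GK13}, which we may simply invoke.
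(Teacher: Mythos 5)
The paper itself gives no argument for this lemma: it is imported verbatim as \cite[Lemma 3.9]{GK13} and used as a black box in the second proof of Theorem~\ref{thm:hamiltonian_commute_A}. So your closing remark --- that one may simply invoke \cite{GK13} --- is exactly what the paper does, and is an acceptable route.

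Your attempted standalone proof, however, has a genuine gap at its central step. You claim that $\sum_{v\in\Gamma} b_v(\Gamma,M_0)$ is unchanged when $\Gamma$ is pushed across a face, and that this cancellation is ``forced by the fact that $M_0$ is a perfect matching.'' That justification cannot be right, because both the invariance and the lemma itself are false for a general perfect matching. Concretely, take the bipartite square lattice on a torus (say a $4\times 2$ grid) and the perfect matching $M$ in which every black vertex is matched to the white vertex immediately to its right. For the upward vertical loop $\Gamma$ through the column $x=0$, every vertex of $\Gamma$ has its matched edge hanging off to the right, so $\sum_v b_v(\Gamma,M)=+2$; for the homotopic loop through the column $x=1$, every matched edge hangs off to the left, giving $-2$. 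Thus pushing the loop across a single column of faces changes the sum: when a vertex whose matched edge points away from the face leaves the loop, its contribution is simply lost, not ``transferred to an adjacent vertex.'' Consequently any correct proof must use the specific structure of the reference matching $M_0$ coming from the edge vectors / circular-order-preserving construction of Section~\ref{sec:hamiltonian_GK} (equivalently, its description via the zig-zag loops), which is precisely the content of the proof in \cite{GK13}; it is not recoverable from the bare fact that each vertex meets exactly one edge of $M_0$. Your reduction to straight loops and the periodic left/right count for the explicit $M_0$ on $\overline{\G}$ could be salvaged as a direct verification, but then the deformation-invariance step would have to be established using that same explicit $M_0$ (and for every homology class realized by a simple loop on $\overline{\G}$, not only vertical ones), so as written the argument does not go through.
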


Let $a,b\in H_1(\T,\Z)$.
We will show that $\{ H_{a,\overline{\G},M_0}, H_{b,\overline{\G},M_0} \} = 0$.
We simplify the notation by letting $H_a := H_{a,\overline{\G},M_0}$.
Let $\M_a$ be the set of all perfect matchings of $\overline{\G}$ whose homology class with respect to $M_0$ is $a$, i.e. $[M]_{M_0} = a \in\Z\times\Z$.
Then $H_a = \sum_{M\in\M_a} w(M)/w(M_0)$.

Let $M_1\in\M_a$ and $M_2\in\M_b$.
By Proposition~\ref{prop:matching_loop}, $[M_1]-[M_2]$ is a collection of non-intersecting simple loops.
There are two types of simple loops: homologically trivial loops and homologically nontrivial loops.
Define an involution $\iota:\M_a\times\M_b \rightarrow \M_a\times\M_b$ by $(M_1,M_2)\mapsto(M_1',M_2')$ where $M_1', M_2'$ are obtained from $M_1, M_2$ by exchanging all edges in each homologically trivial loop of $[M_1]-[M_2]$.
See the following example.
\begin{center}
\includegraphics[width = \textwidth]{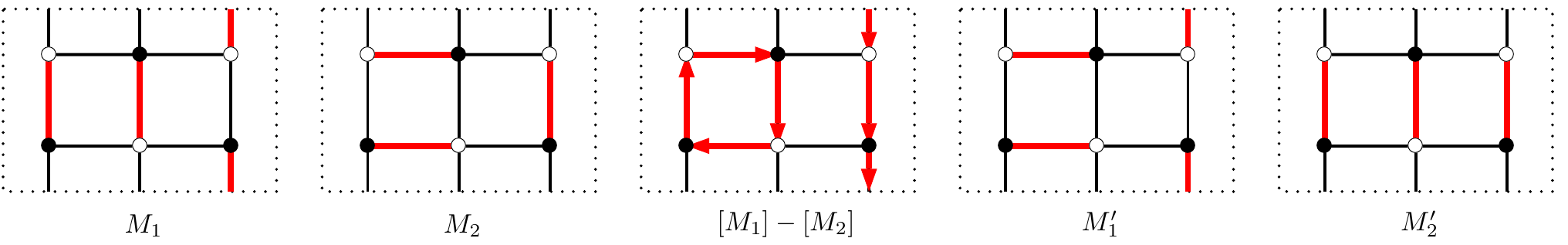}
\end{center}

The involution $\iota$ define an equivalence relation on $\M_a\times\M_b$ by $(M_1,M_2)\sim (M_1',M_2')$ if $\iota((M_1,M_2)) = (M_1',M_2')$.
So each equivalence class has either one or two elements.
Each fixed point of $\iota$ is in its own singleton class.

Consider
\begin{align*}
\{ H_a, H_b \} &= \sum_{(M_1,M_2)\in\M_a\times\M_b} \left\{ \frac{w(M_1)}{w(M_0)} , \frac{w(M_2)}{w(M_0)}  \right\}
\end{align*}
The sum is then divided into two sums: 
\begin{itemize}
\item The first summation runs over all the fixed points of $\iota$:
\begin{align} \label{eq:1sum}
\sum_{\{(M_1,M_2)\}} \left\{ \frac{w(M_1)}{w(M_0)} , \frac{w(M_2)}{w(M_0)} \right\}
\end{align}
\item The second summation runs over all equivalence classes $\left\{(M_1,M_2),(M_1',M_2')\right\}$ of size 2:
\begin{align} \label{eq:2sum} 
\sum_{ \{(M_1,M_2),(M_1',M_2')\} } \left\{ \frac{w(M_1)}{w(M_0)} , \frac{w(M_2)}{w(M_0)} \right\} + \left\{ \frac{w(M_1')}{w(M_0)} , \frac{w(M_2')}{w(M_0)} \right\}
\end{align}
\end{itemize}

To calculate the first sum \eqref{eq:1sum}, let $(M_1,M_2)$ be a fixed point of $\iota$.
By Proposition~\ref{prop:A_poisson_pairing} we have
\[ \left\{ \frac{w(M_1)}{w(M_0)} , \frac{w(M_2)}{w(M_0)} \right\} = \sum_{v} \epsilon_v \left( [M_1]-[M_0],[M_2]-[M_0] \right) \frac{w(M_1)w(M_2)}{w(M_0)^2}
\]
where $\epsilon_v$ is the contribution from vertex $v$ to the intersection pairing.
For each vertex $v$, there are edges $e_0,e_1,e_2$ incident to $v$ and belong to $M_0$, $M_1$, $M_2$, respectively.
If $e_1 = e_2$, then $\epsilon_v \left( [M_1]-[M_0],[M_2]-[M_0] \right) = 0$.
If $e_1\neq e_2$, the vertex $v$ must belong to the loop $[M_1]-[M_2]$, and vice versa.
Since $[M_1]-[M_2]$ is a fixed point of $\iota$, it has no homologically trivial loops.
So every loop is homologically nontrivial.

Consider a homologically nontrivial simple loop $\Gamma$ of the loop $[M_1]-[M_2]$ (a connected component of $[M_1]-[M_2]$).
We have the following four configurations of $e_0,e_1,e_2$ depending whether $e_0$ is on the left/right of $\Gamma$ or whether $v$ is black/white.
Note that $e_1$ (resp. $e_2$) goes from black to white (resp. white to black) in $\Gamma$.
\begin{center}
\includegraphics[scale=.8]{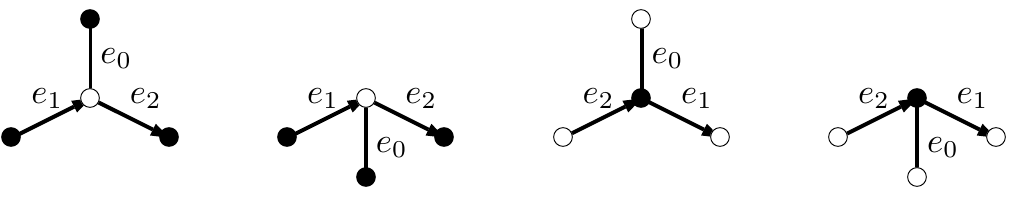}
\end{center}
The local contribution to $\epsilon_v \left( [M_1]-[M_0],[M_2]-[M_0] \right)$ from each configuration is $1/2$, $-1/2$, $1/2$ and $-1/2$, respectively.
We can conclude that the contribution is $1/2$ (resp. $-1/2$) when $e_0$ is on the left (resp. right) of $\Gamma$.
By Lemma~\ref{lem:bending}, we can conclude that $$\epsilon_v \left( [M_1]-[M_0],[M_2]-[M_0] \right) = 0.$$
Hence the first summation vanishes.

For the second sum, since $w(M_1)w(M_2) = w(M_1')w(M_2')$ the summand is equal to 
\begin{align} \label{eq:summand}
\sum_v \left( \epsilon_v \left( [M_1]-[M_0],[M_2]-[M_0] \right) + \epsilon_v \left( [M_1']-[M_0],[M_2']-[M_0] \right) \right) \frac{w(M_1)w(M_2)}{w(M_0)^2}.
\end{align}
For each vertex $v$, there are edges incident to $v$ and belong to $M_1$, $M_2$ and $M_0$.
Similarly to the computation for the first sum, if such edges of $M_1$ and $M_2$ are the same, then the summand vanishes.
If $v$ belongs to a homologically nontrivial loop, by Lemma~\ref{lem:bending}, the summand also vanishes.
If $v$ belongs to a homologically trivial loop, we see that $[M_1]-[M_0]$ (resp. $[M_2]-[M_0]$) is locally the same as $[M_2']-[M_0]$ (resp. $[M_1']-[M_0]$) at $v$.
Hence
\begin{align*} \epsilon_v \left( [M_1']-[M_0],[M_2']-[M_0] \right)
&= \epsilon_v \left( [M_2]-[M_0],[M_1]-[M_0] \right)\\
&= - \epsilon_v \left( [M_1]-[M_0],[M_2]-[M_0] \right).
\end{align*}
Hence the summand \eqref{eq:summand} vanishes.
This concludes that the second summation vanishes.

Thus $\{ H_a, H_b \} = 0$.
This finishes the proof.


\section{\texorpdfstring{$B_r$ Q-systems}{Q-systems of type B}}
\label{sec:B}

We construct a ``double cover" of $B_r$ Q-system quiver and compute Hamiltonians on the weighted bipartite torus graph associated with the quiver.


\subsection{\texorpdfstring{$B_r$ Q-systems and weighted graph mutations}{Q-systems of type B and weighted graph mutations}}

Consider the following quiver of an $B_r$ Q-system.
See Theorem~\ref{thm:qsys_cluster} for the detail.
\begin{center}
\includegraphics[scale=0.7]{quiver_Br}
\end{center}
In the case of $A_r$ Q-system, we added a frozen vertex to the quiver so that it has an associated bipartite torus graph.
Unlike the previous case, the quiver of $B_r$ Q-system does not have an associated bipartite torus graph.
We resolve this issue by considering instead a double-cover of the quiver together with a frozen vertex as in the following picture.
Let $\Q$ denote the resulting quiver.
\begin{center}
\includegraphics[scale=0.7]{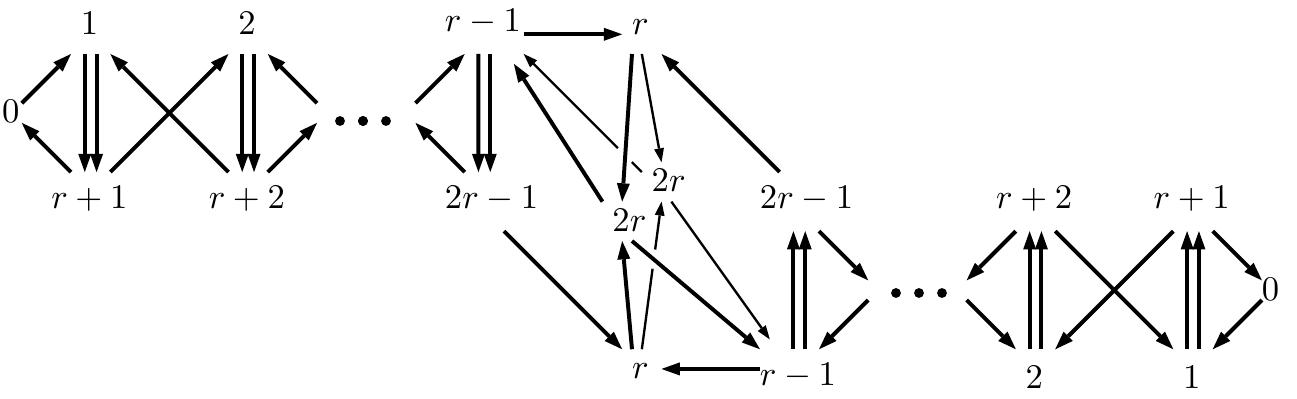}
\end{center}
Notice that $\Q$ and the original quiver are locally the same.
There are two copies for each vertex.
So we can think of this quiver similar to a double-cover of the original quiver with an extra frozen vertex.
The bipartite torus graph $\G$ associated with $\Q$ is depicted below.
\begin{center}
\includegraphics[width=0.6\textwidth]{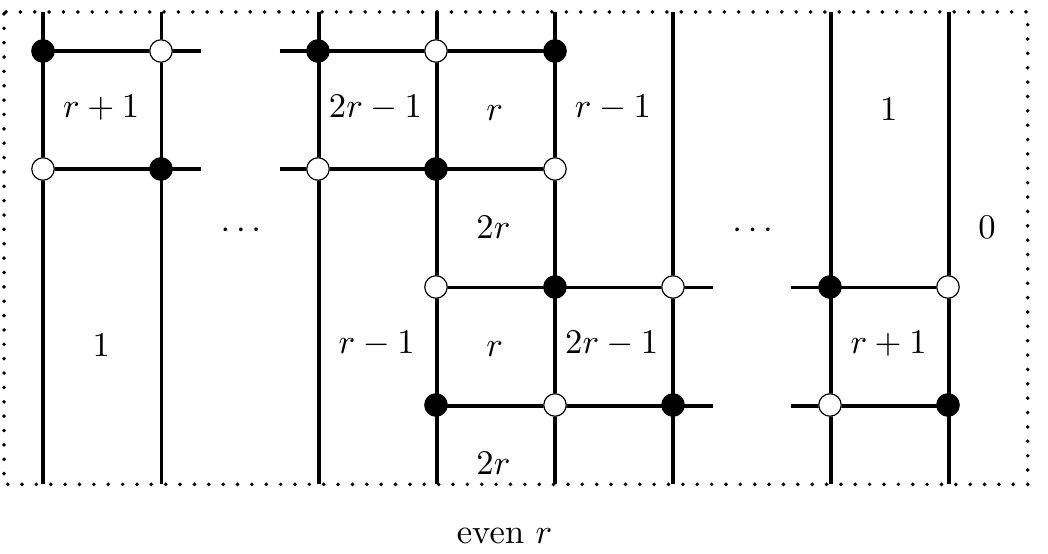}
\end{center}
\begin{center}
\includegraphics[width=0.6\textwidth]{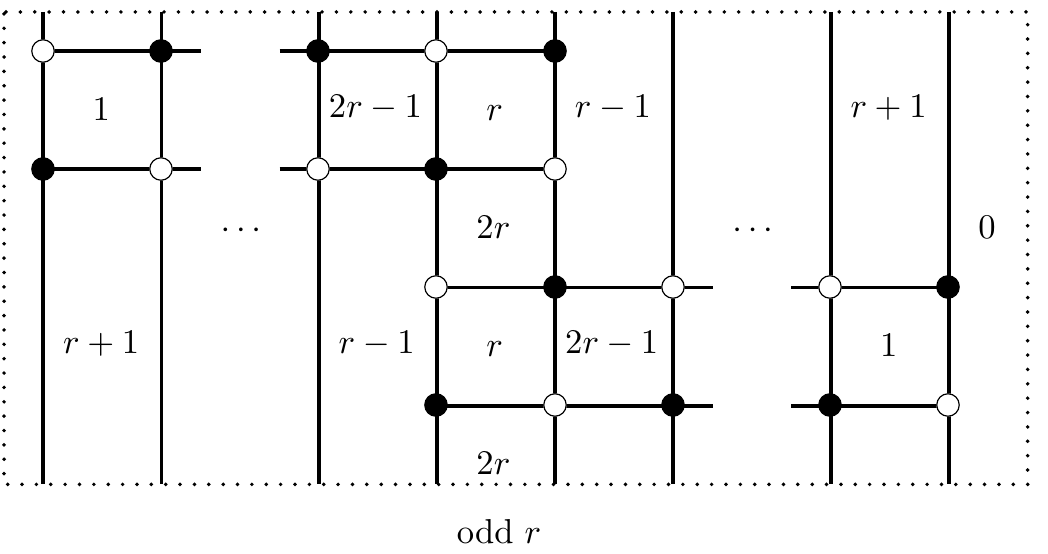}
\end{center}

We then assign Q-system variables as weights of $\G$ according to face labels, see Theorem~\ref{thm:qsys_cluster}.
We abuse the notation and use $\mu$ for a sequence of mutations where the mutation at $i$ actually means the mutations at both faces labeled by $i$.
Then $\sigma\mu$ sends $\Q$ to itself and the Q-variables are shifted by $k\rightarrow k+1$ as well.
To be precise, $Q_{i,k} \mapsto Q_{i,k+1}$ for $i\in[1,r-1]$ while $Q_{r,2k}\mapsto Q_{r,2k+2}$ and $Q_{r,2k+1}\mapsto Q_{r,2k+3}.$

Let $M_0$ be the perfect matching of $\G$ containing all vertical edges whose top vertex is black.
It can be depicted as the following when $r$ is odd and similarly when $r$ is even.
\begin{center}
\includegraphics[width=0.6\textwidth]{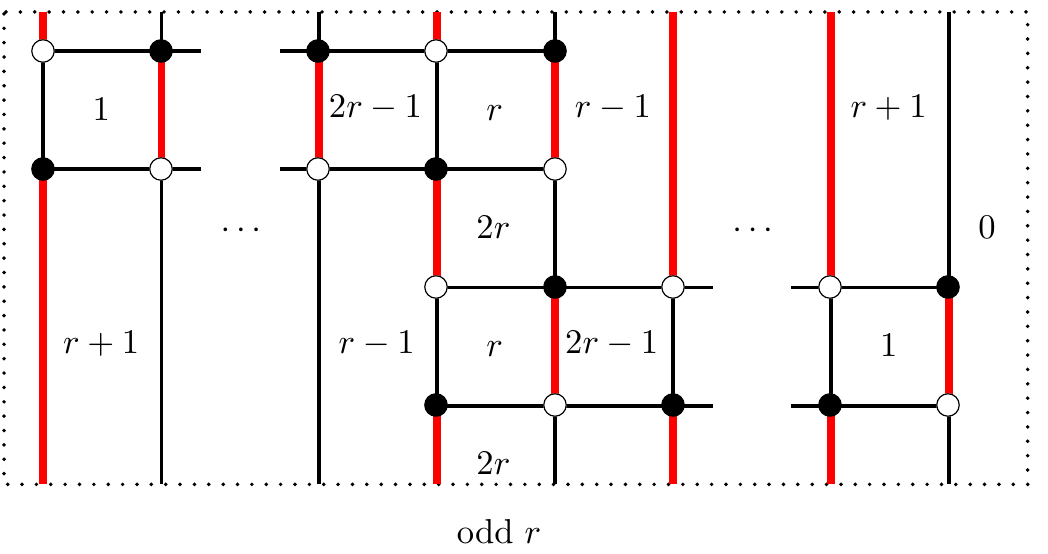}
\end{center}

Similarly to the proof of Theorem~\ref{thm:Ar_conserve}, we can check that all the conditions in Theorem~\ref{thm:mutation_mutation} and Theorem~\ref{thm:hamiltonian_conserved} hold.
Hence we have
\[ H_{(i,j),\G,M_0}(A_{1,k},\dots,A_{2r,k}) = H_{(i,j),\G,M_0}(A_{1,k+1},\dots,A_{2r,k+1}) \]
where
\[ A_{i,k} = \begin{cases} Q_{i,k}, & i\in[1,r-1],\\ Q_{r,2k}, & i = r, \\ Q_{i-r,k+1}, & i\in[r+1,2r-1], \\ Q_{r,2k+1}, & i=2r.\end{cases} \]
Hence the Hamiltonians $H_{(i,j),\G,M_0}$ are conserved quantities of the $B_r$ Q-system.
This proved the following theorem.

\begin{thm}
Let $(\G,(A_{i}))$ be a weighted bipartite torus graph defined above.
Then the Hamiltonians $H_{(i,j),\G,M_0}(A_1,\dots,A_{2r})$ are conserved quantities of the $B_r$ Q-system dynamic $Q_{\a,k} \mapsto Q_{\a, k + t_\a }$ where $t_\a =1 $ for $\a \in [1,r-1]$ and $t_r = 2$.
\end{thm}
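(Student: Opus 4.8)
The plan is to mimic the proof of Theorem~\ref{thm:Ar_conserve} verbatim, transported through the double-cover construction. Work with the weighted graph $(\G,(A_{i,k}))$ where $A_{i,k}$ is as in \eqref{eq:BrQsys_sq_mutation}, together with the frozen face $0$ carrying weight $1$. By Theorem~\ref{thm:qsys_cluster} there is a sequence of quiver mutations $\mu = \mu_{2r}\bigl(\prod_{i=1}^{r-1}\mu_i\bigr)\mu_r$ — with each $\mu_i$ performed simultaneously at the two lifts of face $i$ in the double cover — and a relabeling $\sigma$ as in \eqref{eq:BrQsys_sq_mutation} such that $\sigma\mu$ fixes $\Q$ and shifts the Q-system variables by $Q_{\a,k}\mapsto Q_{\a,k+t_\a}$. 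The task is to realize each elementary step of $\mu$ as a graph mutation (Definition~\ref{def:graph_mutation}) to which both Theorem~\ref{thm:mutation_mutation} and Theorem~\ref{thm:hamiltonian_conserved} apply.

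Concretely, I would check step by step along $\mu$ the three hypotheses needed: (i) each mutation occurs at a contractible quadrilateral face — every face of $\G$ except face $0$ and the noncontractible outer face is a quadrilateral, and the mutated faces always lie among these; (ii) the current reference perfect matching, obtained from $M_0$ by the induced-matching rule of Figure~\ref{fig:induced_urban_shrink}, meets the mutating face in exactly one edge, which is visible on the explicit pictures of $M_0$ for $r$ odd and $r$ even; (iii) the faces adjacent to the mutating face are pairwise distinct except possibly for opposite pairs, so that $\Q_\G$ acquires no $1$-cycles and Theorem~\ref{thm:mutation_mutation} applies. The ordering $\mu_r$, then $\prod_{i=1}^{r-1}\mu_i$, then $\mu_{2r}$, matters: one checks that after $\mu_r$ the faces $1,\dots,r-1$ become mutable quadrilaterals whose mutations commute, and that afterwards face $2r$ likewise satisfies (i)--(iii). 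As in the $A_r$ argument, drawing the intermediate graphs reduces this to a finite local check near the special node $r$ and its two double-cover copies. Granting (i)--(iii), Theorem~\ref{thm:mutation_mutation} yields $\Q_{\mu(\G)}=\mu(\Q_\G)=\Q$ with weights transformed by the corresponding cluster mutations, hence equal to the Q-system values at the shifted times, while Theorem~\ref{thm:hamiltonian_conserved} gives equality of Hamiltonians at each step. Composing and applying the translation $\sigma$, which carries $(\G,M_0)$ back to $(\G,M_0)$, gives
\[ H_{(i,j),\G,M_0}(A_{1,k},\dots,A_{2r,k}) = H_{(i,j),\G,M_0}(A_{1,k+1},\dots,A_{2r,k+1}), \]
which is precisely invariance of $H_{(i,j),\G,M_0}$ under $Q_{\a,k}\mapsto Q_{\a,k+t_\a}$.

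The main obstacle is purely combinatorial bookkeeping around the special node $r$. Because $\G$ is a double cover with a frozen face, the local neighborhoods of the two lifts of face $r$ (and of face $2r$) are exactly where condition (iii) could fail, and where the induced reference matching must be tracked most carefully through the non-commuting stages $\mu_r$ and $\mu_{2r}$. Verifying that opposite-face coincidences are the only coincidences there, for both parities of $r$, is the step that genuinely requires the explicit intermediate pictures rather than a formal argument; once that is done, everything else follows from the two cited theorems exactly as in type $A$.
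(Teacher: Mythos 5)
Your proposal follows essentially the same route as the paper: the paper's proof likewise just asserts that, "similarly to the proof of Theorem~\ref{thm:Ar_conserve}," the hypotheses of Theorem~\ref{thm:mutation_mutation} and Theorem~\ref{thm:hamiltonian_conserved} hold along the sequence $\mu=\mu_{2r}\bigl(\prod_{i=1}^{r-1}\mu_i\bigr)\mu_r$ (applied at both lifts of each face in the double cover), and then concludes from the periodicity $\sigma\mu(\Q)=\Q$ that the Hamiltonians are invariant under the shift $A_{i,k}\mapsto A_{i,k+1}$. Your explicit listing of the three conditions and your flag that the only delicate point is the local check near the lifts of faces $r$ and $2r$ is, if anything, more detailed than the paper's own treatment.
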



\subsection{Partition function of hard particles}

In this section, we write the Hamiltonians as partition functions of hard particles on a weighted graph, analog to what has been done for Q-systems of type A.

From Proposition~\ref{prop:matching_loop}, $[M]-[M_0]$ is a product of non-intersecting simple loops of $\G$.
The following loops are all connected simple loops that can be appeared (given our choice of $M_0$).
This will be proved in Theorem~\ref{thm:Br_matching_gamma}.

We first define $2r$ straight loops on $\G$ and denote them by $\Gamma_{2a-1}$ for $a\in [1,2r]$.
\begin{center}
\includegraphics[scale=0.8]{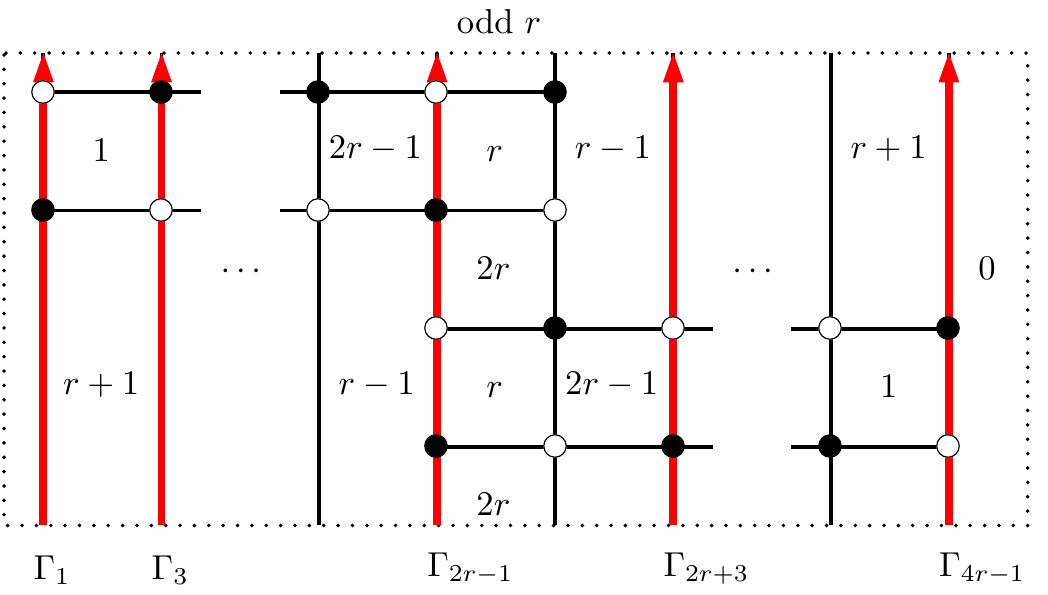}
\end{center}

Next, we define zig-zag loops $\Gamma_{2a}$ for $a\in[1,r-1]\cup [r+1,2r-1]$ as follows.
\begin{center}
\includegraphics[scale=0.8]{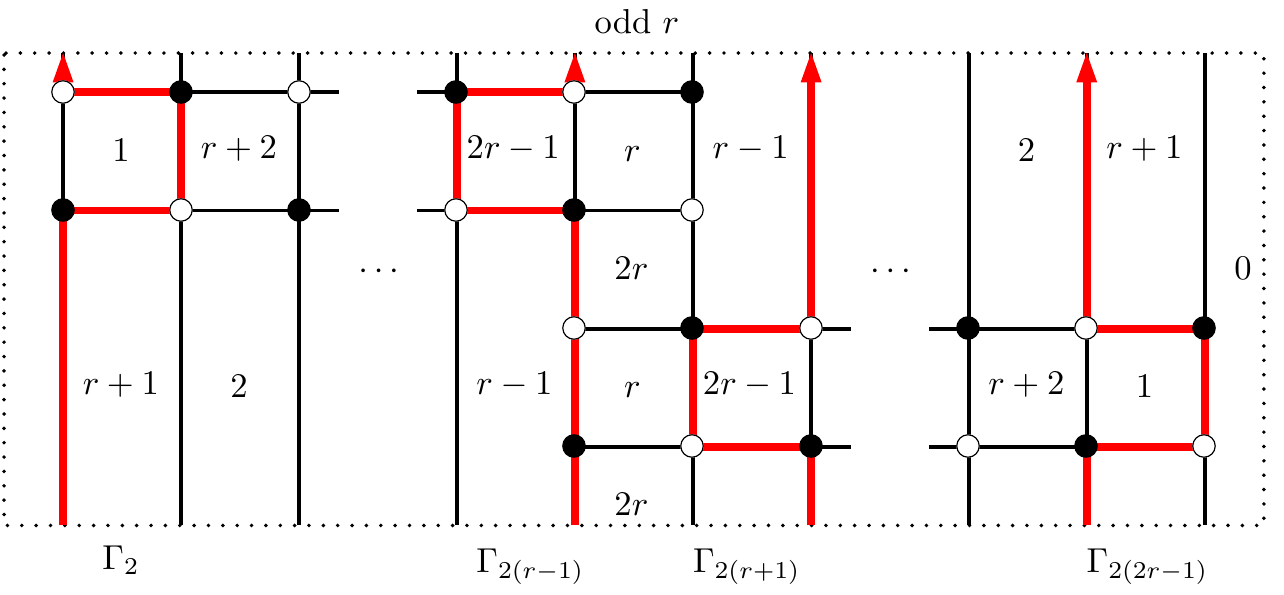}
\end{center}
We notice that when $a\in[1,r-1]$, $\Gamma_{2a}$ always goes counterclockwise around face $a$ and clockwise around face $r+a$.
When $a \in [r+1,2r-1]$, $\Gamma_{2a}$ goes counterclockwise around face $2r-a$ and clockwise around face $3r-a$.

Lastly, we define $\Gamma_{2r,j}$ for $j\in[1,6]$.
They are depicted as follows.
\begin{center}
\includegraphics[scale=.8]{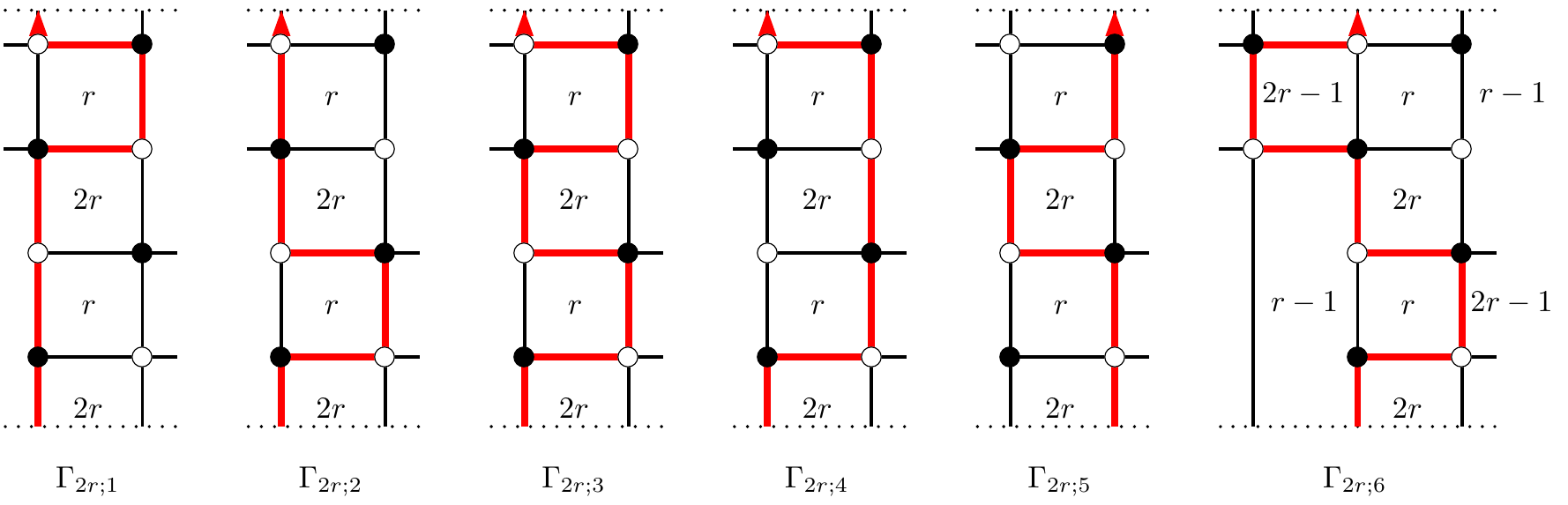}
\end{center} 

We then have the following theorem analog to Theorem~\ref{thm:Ar_matching_gamma}.

\begin{thm} \label{thm:Br_matching_gamma}
Let $M$ be a perfect matching of $\G$. 
Then $[M]-[M_0]$ is a nonintersecting collection of $\Gamma$.
Furthermore, every nonintersecting collection of $\Gamma$ is $[M]-[M_0]$ for a unique perfect matching $M$ of $\G$.
\end{thm}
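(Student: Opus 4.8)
The plan is to mimic the proof of Theorem~\ref{thm:Ar_matching_gamma}, carrying out a purely local analysis at the vertices of $\G$ and then reconstructing a perfect matching from a loop collection. By Proposition~\ref{prop:matching_loop}, $[M]-[M_0]$ is automatically a product of non-intersecting simple oriented loops, so the only content of the first assertion is that every such simple loop is one of the listed $\Gamma$'s. First I would record the basic local dichotomy: at each vertex $v$ of $\G$ both $M$ and $M_0$ contribute exactly one incident edge; if these agree then $v$ lies on no loop, and if they differ then $v$ has exactly two incident edges of $[M]-[M_0]$, the $M$-edge oriented black-to-white and the $M_0$-edge oriented white-to-black.

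Next I would enumerate the finitely many admissible local pictures, black vertex by black vertex and white vertex by white vertex. Away from the two faces $r$ and $2r$ the graph $\G$ is locally identical to the type-$A$ graph, so the enumeration there is exactly the one in the proof of Theorem~\ref{thm:Ar_matching_gamma}, and each local fragment is a piece of a straight loop $\Gamma_{2a-1}$ or of a zig-zag loop $\Gamma_{2a}$. The new work is the case analysis at the vertices bordering faces $r$ and $2r$; I would do this separately for $r$ odd and $r$ even (the two pictures of $\G$) and check that the resulting fragments assemble into precisely the six special loops $\Gamma_{2r,j}$, $j\in[1,6]$, and nothing else. Because each admissible local fragment determines how the loop enters and leaves the vertex without any branching, walking along a connected component of $[M]-[M_0]$ from vertex to vertex forces it to coincide globally with one of the listed $\Gamma$'s; this proves the first assertion.

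For the converse, the key point — to be read straight off the defining pictures of $\Gamma_{2a-1}$, $\Gamma_{2a}$ and $\Gamma_{2r,j}$ — is that every listed loop \emph{alternates with $M_0$}: at each vertex it traverses, exactly one of its two incident edges lies in $M_0$. Granting this, for a nonintersecting collection with edge set $E$ I would set $M := (E\setminus M_0)\cup(M_0\setminus E)$ and verify it is a perfect matching (a vertex off all the loops keeps its unique $M_0$-edge; a vertex on exactly one loop trades its $M_0$-edge for the loop's other incident edge), that $[M]-[M_0]$ equals the given collection with the correct orientations, and that $M$ is uniquely determined by these conditions.

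The step I expect to be the real obstacle is the exhaustive local bookkeeping around the ``folded'' region near faces $r$ and $2r$: one must be sure that the straight, zig-zag and six special loops are genuinely all of the simple loops that can occur, with no branching possibilities overlooked and both parities of $r$ covered. Everything else — the alternation property, the reconstruction of $M$, and the appeal to Proposition~\ref{prop:matching_loop} — is routine and parallels the type-$A$ argument.
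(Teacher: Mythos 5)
Your proposal is correct and follows essentially the same route as the paper, which simply invokes the proof of Theorem~\ref{thm:Ar_matching_gamma}: a local case analysis at black and white vertices showing every component of $[M]-[M_0]$ is one of the listed loops, plus the reconstruction $M=(E\setminus M_0)\cup(M_0\setminus E)$ using the fact that each $\Gamma$ alternates with $M_0$. Your explicit flagging of the extra bookkeeping near faces $r$ and $2r$ (for both parities of $r$) is exactly the part the paper leaves implicit.
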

\begin{proof}
Using exactly the same proof as in Theorem~\ref{thm:Ar_matching_gamma}, we can see that all the loops $\Gamma$ listed above are all possible simple loops appeared in $[M]-[M_0]$.
\end{proof}

Let $G_r$ be the following graph with $4r+4$ vertices indexed by the set $[1,2r-1]\cup[2r+1,4r-1] \cup \{(2r,i)\mid i\in[1,6]\} $ defined as the following
\begin{center}
\includegraphics[scale = .8]{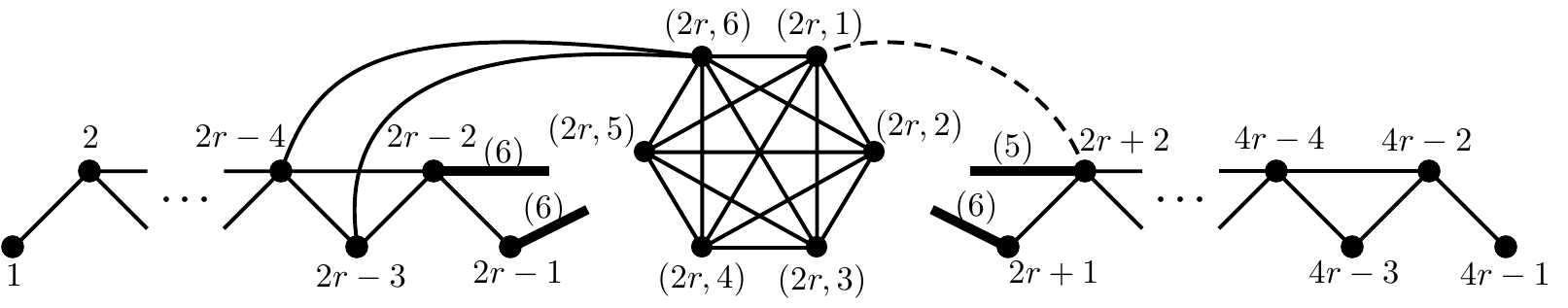}
\end{center}
There is a complete graph $K_6$ as a subgraph of $G_r$ in the middle.
The label (6) on the three thick lines indicates that the vertex connects to all six vertices in $K_6$.
There is a thick line with label (5);
vertex $2r+2$ connects to all vertices in $K_6$ except to the vertex $(2r,1)$.
(This is indicated by a dotted line between $(2r,1)$ and $(2r+2)$.)
The vertex $(2r,6)$ has two extra edges connecting to vertex $2r-4$ and $2r-3$.

The loop $\Gamma_i$ intersects $\Gamma_j$ if and only if the vertices $i$ and $j$ are connected in $G_r$.
For any subset $I\subseteq [1,2r-1]\cup[2r+1,4r-1] \cup \{(2r,i)\mid i\in[1,6]\}$, the loops in $\{ \Gamma_i \mid i\in I\}$ are pairwise disjoint if and only if $I$ is a subset of pairwise nonadjacent vertices of $G_r$.
Also, nonintersecting collections of $\Gamma$ of size $n$ are in bijection with $n$-subsets of pairwise nonadjacent vertices of $G_r$.

By Theorem~\ref{thm:Br_matching_gamma}, the possible homology classes of $[M]-[M_0]$ are $(0,k)$ for $k\in[0,2r]$.
Let $\gamma_i$ be the weight of $\Gamma_i$.
Theorem~\ref{thm:Br_matching_gamma} implies the following theorem.

\begin{thm} \label{thm:hamiltonian_gamma_Br}
Let $k\in[0,2r]$. Then
\[ H_{(0,k),\G,M_0}(A_{1},\dots,A_{2r}) = \sum_{|I|=k}\prod_{i\in I} \gamma_i \]
where the sum runs over all $k$-subsets $I$ of pairwise nonadjacent vertices of $G_r$.
\end{thm}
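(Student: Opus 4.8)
The plan is to deduce Theorem~\ref{thm:hamiltonian_gamma_Br} directly from Theorem~\ref{thm:Br_matching_gamma} together with the combinatorial correspondence already set up in the text, mimicking the proof of Theorem~\ref{thm:hamiltonian_gamma_Ar} in type $A$. Recall that by definition
\[ H_{(0,k),\G,M_0}(A_1,\dots,A_{2r}) = \sum_{M:\,[M]_{M_0}=(0,k)} \frac{w(M)}{w(M_0)}, \]
where the sum runs over all perfect matchings $M$ of $\G$ whose loop $[M]-[M_0]$ has homology class $(0,k)$. So the task is to rewrite each summand $w(M)/w(M_0)$ as a product of loop weights $\prod_{i\in I}\gamma_i$ and to check that the indexing sets $I$ range exactly over the $k$-subsets of pairwise nonadjacent vertices of $G_r$.

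First I would recall from the remark following Definition~\ref{def:weight_path} that $w([M]-[M_0]) = w(M)/w(M_0)$ for any perfect matchings $M,M_0$, and that the weight of a product of loops is the product of their weights (both the weight of paths in Definition~\ref{def:weight_path} and the product-of-loops operation are multiplicative, since cancelling a pair of oppositely oriented edges multiplies the weight by $w(e)\cdot w(e)^{-1}=1$). By Theorem~\ref{thm:Br_matching_gamma}, for a perfect matching $M$ with $[M]_{M_0}=(0,k)$ the loop $[M]-[M_0]$ decomposes as a nonintersecting collection $\{\Gamma_i\mid i\in I\}$ for a unique subset $I$ of the index set $[1,2r-1]\cup[2r+1,4r-1]\cup\{(2r,i)\mid i\in[1,6]\}$, and this assignment $M\mapsto I$ is a bijection onto the set of all such nonintersecting collections. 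Hence $w(M)/w(M_0) = \prod_{i\in I}\gamma_i$.

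Next I would identify which subsets $I$ actually occur. The text already records that $\Gamma_i$ intersects $\Gamma_j$ precisely when $i$ and $j$ are adjacent in $G_r$, so nonintersecting collections correspond bijectively to hard particle configurations on $G_r$, i.e. subsets of pairwise nonadjacent vertices; and the homology class of $[M]-[M_0]$ equals the sum of the homology classes of the constituent loops. Here the key point, which I would verify by inspecting the pictures defining $\Gamma_{2a-1}$, $\Gamma_{2a}$ and $\Gamma_{2r,j}$, is that each of these loops is a vertical (homology class $(0,1)$) loop — the straight loops go bottom-to-top, the zig-zag loops wind once vertically while only bending locally around a pair of opposite faces, and likewise for the six special loops. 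Therefore a nonintersecting collection of size $|I|$ has total homology class $(0,|I|)$, so $[M]_{M_0}=(0,k)$ forces $|I|=k$, and conversely every $k$-subset $I$ of pairwise nonadjacent vertices of $G_r$ arises. Combining these observations with Theorem~\ref{thm:Br_matching_gamma} gives
\[ H_{(0,k),\G,M_0}(A_1,\dots,A_{2r}) = \sum_{|I|=k}\prod_{i\in I}\gamma_i, \]
as claimed.

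The main obstacle I anticipate is purely the bookkeeping of the $B_r$ graph near the ``special'' region: one must confirm that each $\Gamma_{2r,j}$ ($j\in[1,6]$) is indeed homologically $(0,1)$ and that the adjacency structure among these six loops (and their extra adjacencies to $\Gamma_{2r-1}$ via the vertices $2r-4$, $2r-3$ in $G_r$) is faithfully recorded by the graph $G_r$ drawn above, including the $K_6$ and the dotted non-edge between $(2r,1)$ and $2r+2$; this is where a careful case check of local pictures is needed, exactly as in the proof of Theorem~\ref{thm:Ar_matching_gamma} but with more configurations. Once that local verification is in hand, the rest is a formal consequence of Theorem~\ref{thm:Br_matching_gamma} and the multiplicativity of weights.
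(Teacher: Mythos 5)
Your argument is correct and matches the paper's own (very terse) justification: the paper likewise derives this theorem directly from Theorem~\ref{thm:Br_matching_gamma}, the identification of nonintersecting collections of $\Gamma$'s with hard-particle configurations on $G_r$, and the fact that each constituent loop has homology class $(0,1)$ so that $w(M)/w(M_0)=\prod_{i\in I}\gamma_i$ with $|I|=k$. Your added care about verifying the vertical homology of each $\Gamma_{2r,j}$ and the adjacency bookkeeping near the $K_6$ region is exactly the local check the paper leaves implicit.
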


\begin{ex}
When $r=2$, the graph $G_2$ for $B_2$ Q-system has 12 vertices and can be depicted as the following.
\begin{center}
\includegraphics[scale=.8]{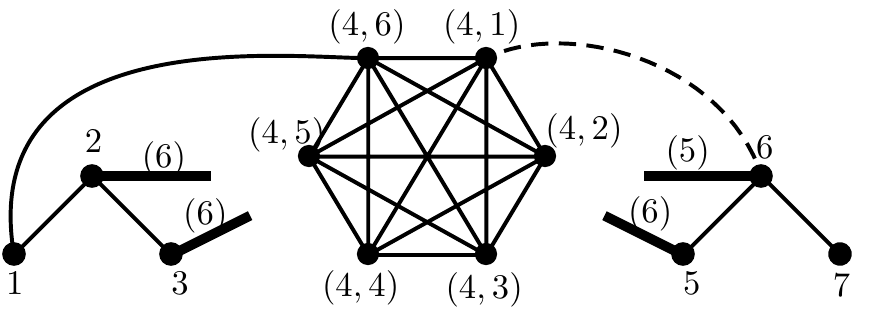}
\end{center}
We have the following weights.
\begin{align*}
\gamma_1 &= \frac{Q_{1,k+1}}{Q_{1,k}}, & 
\gamma_2 &= \frac{Q_{2,2k+1}^2}{Q_{1,k} Q_{1,k+1} Q_{2,2k}}, &
\gamma_3 &= \frac{Q_{1,k} Q_{2,2k+1}^2}{Q_{1,k+1} Q_{2,2k}^2}, \\
\gamma_{4;1} &= \frac{Q_{1,k}^2}{Q_{2,2k}^2}, &
\gamma_{4;2} &= \frac{Q_{1,k}^2}{Q_{2,2k}^2}, &
\gamma_{4;3} &= \frac{Q_{1,k}^3 Q_{1,k+1}}{Q_{2,2k}^2 Q_{2,2k+1}^2}, \\
\gamma_{4;4} &= \frac{Q_{1,k} Q_{1,k+1}}{Q_{2,2k+1}^2}, &
\gamma_{4;5} &= \frac{Q_{1,k} Q_{1,k+1}}{Q_{2,2k+1}^2},  &
\gamma_{4;6} &= \frac{1}{Q_{2,2k}}, \\
\gamma_5 &= \frac{Q_{1,k+1} Q_{2,2k}^2}{Q_{1,k} Q_{2,2k+1}^2}, &
\gamma_6 &= \frac{Q_{2,2k}}{Q_{1,k} Q_{1,k+1}}, &
\gamma_7 &= \frac{Q_{1,k}}{Q_{1,k+1}}.
\end{align*}
For $H_k := H_{(0,k),G,M_0}(A_{1,k},\dots,A_{2r,k})$, we have
\begin{align*}
H_0 &= 1 \\
H_1 &= \frac{Q_{1,k+1}}{Q_{1,k}}+\frac{Q_{2,2k+1}^2}{Q_{1,k} Q_{1,k+1} Q_{2,2k}}+\frac{Q_{1,k} Q_{2,2k+1}^2}{Q_{1,k+1} Q_{2,2k}^2}+\frac{Q_{1,k}^2}{Q_{2,2k}^2}+\frac{Q_{1,k}^2}{Q_{2,2k}^2}+\frac{Q_{1,k}^3 Q_{1,k+1}}{Q_{2,2k}^2 Q_{2,2k+1}^2}+\\
&+\frac{Q_{1,k} Q_{1,k+1}}{Q_{2,2k+1}^2}+\frac{Q_{1,k} Q_{1,k+1}}{Q_{2,2k+1}^2}+\frac{1}{Q_{2,2k}}+\frac{Q_{1,k+1} Q_{2,2k}^2}{Q_{1,k} Q_{2,2k+1}^2}+\frac{Q_{2,2k}}{Q_{1,k} Q_{1,k+1}}+\frac{Q_{1,k}}{Q_{1,k+1}}, \\
H_2 &= \frac{Q_{1,k}^4}{Q_{2,2k}^2 Q_{2,2k+1}^2}+\frac{2 Q_{1,k}^3}{Q_{1,k+1} Q_{2,2k}^2}+\frac{Q_{2,2k+1}^2 Q_{1,k}^2}{Q_{1,k+1}^2 Q_{2,2k}^2}+\frac{2 Q_{1,k}^2}{Q_{2,2k+1}^2}+\frac{Q_{1,k+1}^2 Q_{1,k}^2}{Q_{2,2k}^2 Q_{2,2k+1}^2}+\\
&+\frac{2 Q_{1,k}}{Q_{1,k+1} Q_{2,2k}}+\frac{2 Q_{1,k+1} Q_{1,k}}{Q_{2,2k}^2}+\frac{Q_{2,2k+1}^2}{Q_{1,k}^2 Q_{1,k+1}^2}+\frac{2 Q_{2,2k+1}^2}{Q_{1,k+1}^2 Q_{2,2k}}+\frac{Q_{2,2k+1}^2}{Q_{2,2k}^2}+\\
&+\frac{2 Q_{2,2k}}{Q_{1,k}^2}+\frac{2 Q_{1,k+1}^2}{Q_{2,2k+1}^2}+\frac{Q_{2,2k}^2}{Q_{2,2k+1}^2}+\frac{Q_{1,k+1}^2 Q_{2,2k}^2}{Q_{1,k}^2 Q_{2,2k+1}^2}+2, \\
H_3 &= H_1, \\
H_4 &= 1.
\end{align*}
We notice that $H_0 = H_4$ and $H_1 = H_3$.
In fact, we will show that $H_i = H_{2r-i}$ ($i\in[0,2r]$) for the Hamiltonians of the $B_r$ Q-system.
\end{ex}

\begin{prop}
Let $H_k := H_{(0,k),G,M_0}(A_1,\dots,A_{2r})$ be the Hamiltonians for the $B_r$ Q-system.
Then for $ k\in[0,2r],$
\[ H_k = H_{2r-k}. \]
\end{prop}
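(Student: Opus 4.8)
The plan is to exhibit an involution on perfect matchings of $\G$ that exchanges the homology class $(0,k)$ with $(0,2r-k)$ while preserving the ratio $w(M)/w(M_0)$. Since $H_k = H_{(0,k),\G,M_0} = \sum_{[M]_{M_0}=(0,k)} w(M)/w(M_0)$, such an involution yields $H_k = H_{2r-k}$ term by term.

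The natural candidate is complementation relative to $M_0$ — but more precisely, I would use the fact (from Theorem~\ref{thm:Br_matching_gamma}, analogous to Theorem~\ref{thm:Ar_matching_gamma}) that $[M]-[M_0]$ is always a nonintersecting collection of the distinguished loops $\Gamma$, parametrized by independent sets $I$ in $G_r$, and that the homology class $(0,k)$ records the number of loops, $k = |I|$ (after checking each $\Gamma$ contributes $(0,1)$ to the total homology, which I would verify from the pictures of the straight loops, zig-zag loops, and the $\Gamma_{2r,j}$). So I would look for an involution $\iota$ on independent sets of $G_r$ sending a size-$k$ independent set to a size-$(2r-k)$ independent set, equivalently a size-$k$ nonintersecting collection of $\Gamma$'s to a size-$(2r-k)$ one, such that the product $\prod_{i\in I}\gamma_i$ is invariant. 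By Theorem~\ref{thm:hamiltonian_gamma_Br} this would immediately give $H_k = H_{2r-k}$.

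The cleanest way to realize this, I expect, is at the level of the torus graph rather than the combinatorial graph $G_r$: because the face $0$ of $\G$ is homotopy equivalent to a cylinder, there should be a homeomorphism of $\T$ (a ``vertical flip'' composed with a translation) that preserves $\G$ as a bipartite graph, hence acts on perfect matchings, and that sends a perfect matching whose associated collection of loops winds $k$ times to one winding $2r-k$ times. Concretely: fix a ``maximal'' perfect matching $M_{\max}$ (the one with $[M_{\max}]_{M_0}=(0,2r)$, corresponding to a maximal independent set partitioning $V(G_r)$ into the $2r$ loops); then $M \mapsto$ the matching with loop-set complementary to that of $M$ inside this fixed partition. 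One must check that complementing the loop collection does produce a genuine perfect matching again (it does, by the ``unique $M$ from a nonintersecting $\Gamma$-collection'' half of Theorem~\ref{thm:Br_matching_gamma}) and that it reverses $k \leftrightarrow 2r-k$ and preserves the weight. The weight-preservation should follow because $w(M)/w(M_0) = \prod_{i\in I}\gamma_i$ and $\prod_{\text{all }i}\gamma_i$, the product over a full partition of $V(G_r)$ into the $2r$ loop classes, collapses to $1$ — this is the key computational point, and I would verify it for $B_2$ from the explicit $\gamma_i$ table (noting $\gamma_1\gamma_3\gamma_5\gamma_7 = 1$ and $\gamma_2\gamma_6 = 1$, etc.) and in general by telescoping the face-weight exponents.

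The main obstacle, I expect, is the bookkeeping for the complicated middle region of $G_r$ — the $K_6$ on $\{(2r,i)\}_{i\in[1,6]}$ together with the extra edges to $2r-4, 2r-3$ and the special loops $\Gamma_{2r,j}$. An independent set of $G_r$ can contain at most one of $(2r,1),\dots,(2r,6)$, so ``complementing inside a full partition'' is subtler there than on the ladder rungs, and I need to make sure the target collection is still a valid nonintersecting $\Gamma$-collection of the right size. I would handle this by choosing the reference maximal partition carefully so that it uses exactly one of the $\Gamma_{2r,j}$ and the complement operation stays within that choice, then confirm via Theorem~\ref{thm:Br_matching_gamma} that no adjacency is violated. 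An alternative, if the explicit involution proves messy, is to bypass loops entirely: appeal to the fact that $\G$ has a symmetry of $\T$ (orientation-preserving, swapping the two noncontractible directions' boundary components of face $0$), show it fixes $M_0$ up to the weight-invariant relabeling, and conclude directly that $H_{(i,j)} = H_{(i,2r-j)}$ by functoriality of the Hamiltonian under graph isomorphisms — this is the route I'd actually try first, since Theorem~\ref{thm:hamiltonian_conserved}-style invariance arguments reduce everything to producing one geometric symmetry of the weighted graph.
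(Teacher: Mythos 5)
Your overall strategy (an involution on nonintersecting $\Gamma$-collections, i.e.\ on hard-particle configurations of $G_r$, exchanging sizes $k$ and $2r-k$ while preserving $\prod_{i\in I}\gamma_i$, then invoking Theorem~\ref{thm:hamiltonian_gamma_Br}) is exactly the right frame, and it is the frame the paper uses. But the involution you propose does not work, and the gap is not just ``bookkeeping in the middle region.'' First, complementation inside a fixed maximal loop collection is undefined for most configurations: an independent set of $G_r$ typically contains zig-zag loops $\Gamma_{2a}$ and possibly one of the special loops $\Gamma_{2r,j}$, none of which lie in the fixed family of $2r$ straight loops, so ``the matching with loop-set complementary to that of $M$'' has no meaning there — and this happens along the entire ladder, not only near the $K_6$. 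Second, even on configurations where complementation makes sense, your weight argument proves the wrong identity: if the product of $\gamma_i$ over the full maximal collection is $1$, then the complement has weight $\bigl(\prod_{i\in I}\gamma_i\bigr)^{-1}$, i.e.\ the weight is \emph{inverted}, not preserved, so the terms of $H_k$ are not matched with equal terms of $H_{2r-k}$. (Your sample check is also false: $\gamma_2\gamma_6 = Q_{2,2k+1}^2/(Q_{1,k}Q_{1,k+1})^2 \neq 1$; only the straight-loop pairs satisfy $\gamma_{2b-1}\gamma_{4r-(2b-1)}=1$.) The identities that actually make a size-reversing, weight-preserving involution possible are local exchange relations such as $\gamma_{2a-1}\gamma_{4r-2a}=\gamma_{2a}\gamma_{4r-(2a+1)}$, $\gamma_{4r-(2a-1)}\gamma_{2a}=\gamma_{4r-2a}\gamma_{2a+1}$ and $\gamma_{(2r;1)}\gamma_{2r+2}=\gamma_{(2r;6)}\gamma_{2r+3}$, and the paper's proof is built on them: it folds $G_r$ onto a $2r$-vertex graph $F_r$, encodes a configuration as a black/white dot pattern, cuts $F_r$ into blocks determined by the maximal chains of dots, and applies explicit local involution rules on each block (sending a size-$m$ pattern in a $2n$-vertex block to a size-$(2n-m)$ pattern), each rule being a composition of weight-preserving elementary moves coming from the identities above.

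Your fallback route — a vertical-flip symmetry of the weighted torus graph, which would give the statement ``for free'' via $[\phi(M)]_{M_0}=[\phi(M)]_{\phi(M_0)}+[\phi(M_0)]_{M_0}=(0,2r-k)$ — is only asserted, not constructed. To run it you would have to exhibit an automorphism of $\G$ that preserves vertex colors and sends every face to a face carrying the same $Q$-variable, and then verify $[\phi(M_0)]_{M_0}=(0,2r)$; none of this is checked, and it is far from obvious given that the two faces carrying each $Q$-variable sit in specific positions of the double cover and the six special loops $\Gamma_{2r,j}$ have pairwise different weights. No such symmetry argument appears in the paper, so if you want this route you must actually produce the map; as written, the proposal does not yet prove $H_k=H_{2r-k}$.
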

\begin{proof}
Fix $r \geq 2$.
Let $\pi : [1,2r-1]\cup \{ (2r;j)\}_{j=1}^6 \cup [2r+1,4r-1] \rightarrow [1,2r]$ be a projection defined by
\[ \pi:\begin{cases} i \mapsto i, & i\in[1,2r-1],\\ (2r;j) \mapsto 2r, & j\in[1,6],\\ i\mapsto 4r-i, &i\in[2r+1,4r-1]. \end{cases}
\]
Let $F_r$ be the following graph with $2r$ vertices.
\begin{center}
\includegraphics[scale=0.8]{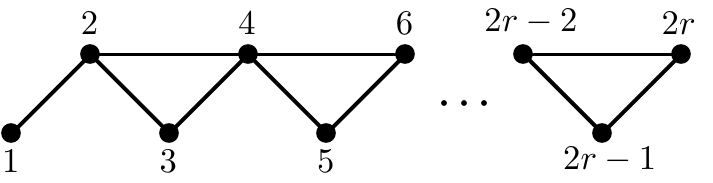}
\end{center}
Conceptually $F_r$ is obtained from $G_r$ by collapsing all six vertices $(2r;1),\dots,(2r;6)$ into one vertex $2r$ and folding the resulting graph by identifying vertices $i \leftrightarrow 4r-i$.
The projection $\pi$ sends a vertex of $G_r$ to a vertex of $F_r$ by the collapsing/folding procedure.

Let $\mathcal{C}$ be the set of all hard particle configurations on $G_r$, and let $\mathcal{C}_k$ be the set of all $k$ hard particle configurations on $G_r$.
We have $\mathcal{C}_k \neq \emptyset$ for $k\in[0,2r]$. (The set $\mathcal{C}_0$ contains exactly one configuration, the empty configuration.)
So
\[ \mathcal{C} = \bigsqcup_{k=0}^{2r} \mathcal{C}_k.\]
We identify a hard particle configuration with a subset of $[1,2r-1]\cup \{ (2r;j)\}_{j=1}^6 \cup [2r+1,4r-1]$.
Let $A\in\mathcal{C}$.
We then associate each element $i\in A$ with a black or white dot on the vertex $\pi(i)$ $F_r$ as follows.
\begin{itemize}
\item An element $i\in [1,2r-1]$ is associated with a black dot on vertex $\pi(i)$.
\item An element $i\in [2r+1,4r-1]$ is associated with a white dot on vertex $\pi(i)$.
\item An element $(2r;j)$, for $j\in[1,5]$, is associated with a black dot together with a label $j$ on vertex $2r$.
\item The element $(2r,6)$ is associated with a white dot together with a label $6$ on vertex $2r$ and a black dot on vertex $2r-2$.
\end{itemize}
See an example in Figure~\ref{fig:involution_folding_ex}.
\begin{figure}
\includegraphics[scale=0.6]{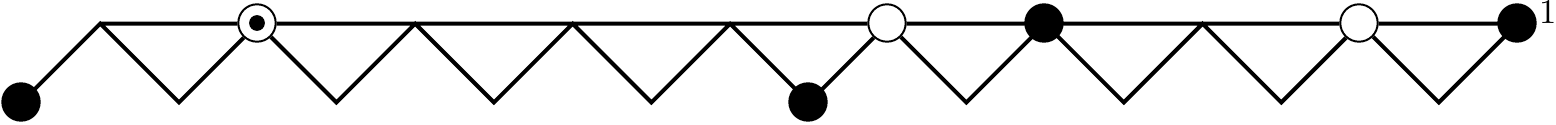}
\caption{An example of dots on $F_{10}$ associated with a configuration $\{1,4,11,14,20,27,33,41\}$ in $\mathcal{C}$.
The dot $\odot$ indicates two dots of different colors on the same vertex.}
\label{fig:involution_folding_ex}
\end{figure}
Notice that we can recover the hard particle configuration on $G_r$ from a configuration of dots on $F_r$.
It is obvious that not every dot configuration on $F_r$ is associated with a configuration in $\mathcal{C}$.

We define an involution $\iota:\mathcal{C}\rightarrow\mathcal{C}$ by the following steps.
\begin{enumerate}
\item Let $A\in\mathcal{C}$. Consider the dot configuration on $F_r$ associated with $A$.
We have a collection of connected chains of dots.
Since $A$ is a hard particle configuration, each chain is a chain of dots of alternate colors or A pair of white and black dots on the same vertex.

For each connected chain $i_1<\dots< i_k$ or a pair of black and white dots $i_1=i_2$ ($k=2$), we let $i$ be the largest odd integer such that $i\leq i_1$, $j$ be the smallest even integer such that $i_k < j$.
When $j>2r$, we set $j=2r$.
We then delete edges $(i-1,i)$, $(i-1,i+1)$, $(j,j+1)$ and $(j,j+2)$ of $F_r$ when possible.
As a result, $F_r$ is decomposed into disconnected blocks (isomorphic to $F_{n}$, $n\leq r$).
Lastly any block of $2n$ vertices containing no dots is decomposed into $n$ blocks of $2$ vertices.
See Figure~\ref{fig:Fr_break} for an example.
\item For each block we define an involution on dot configurations as in Figure~\ref{fig:dot_inv1} and \ref{fig:dot_inv2}.
The blocks in Figure~\ref{fig:dot_inv_fix} are fixed by the involution.
\item The hard particle configuration $\iota(A)$ is the configuration associated with the resulting dot configuration.
See Figure~\ref{fig:config_F_2} for an example.
\end{enumerate}

\begin{figure}
\includegraphics[scale=0.6]{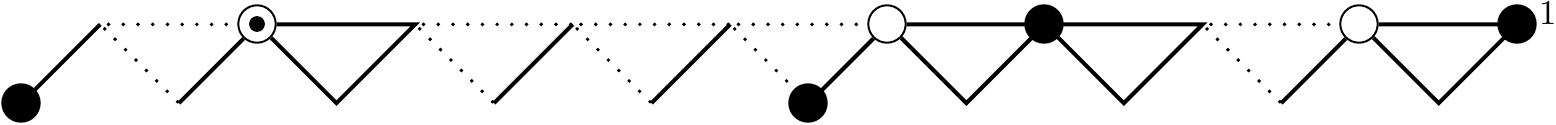}
\caption{A running example from Figure.~\ref{fig:involution_folding_ex} showing $F_{10}$ decomposed into blocks.}
\label{fig:Fr_break}
\end{figure}
\begin{figure}
\includegraphics[scale=0.6]{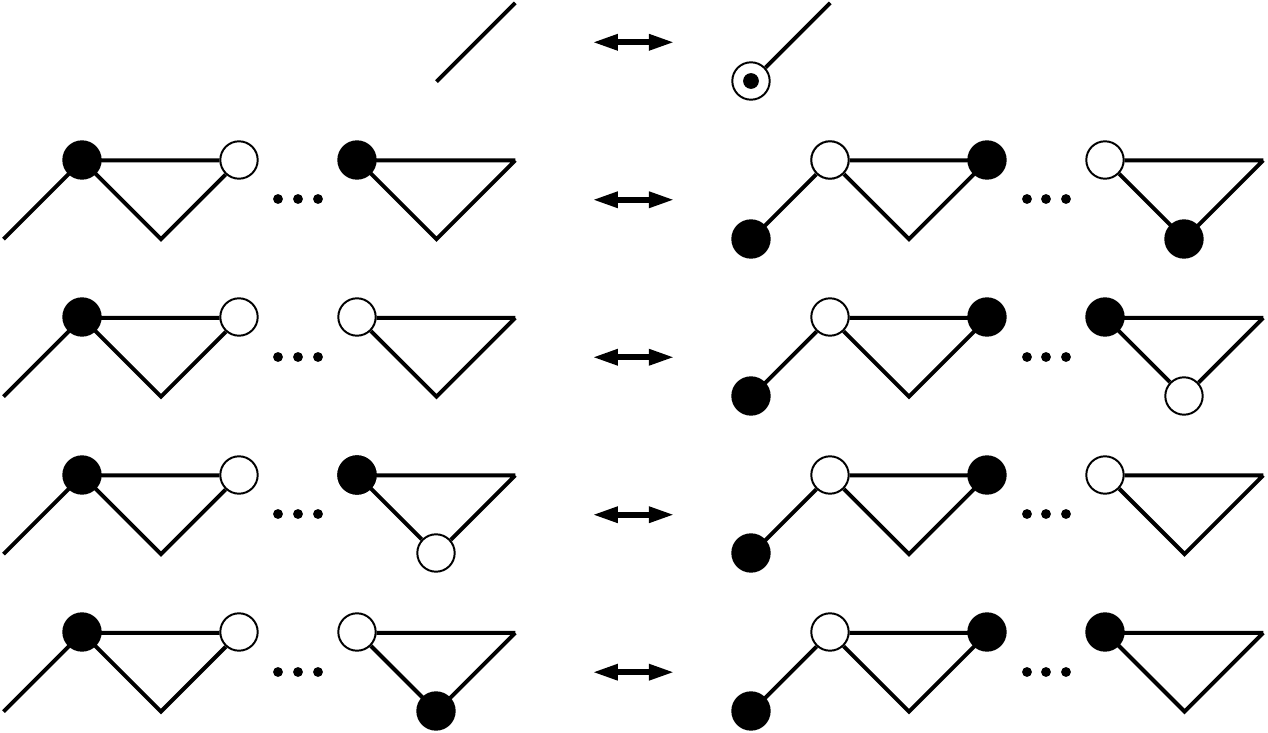}
\caption{An involution on dot configurations when the vertex $2r$ is unoccupied.
We have another version of each picture when all of the colors are switched.}
\label{fig:dot_inv1}
\end{figure}
\begin{figure}
\includegraphics[scale=0.6]{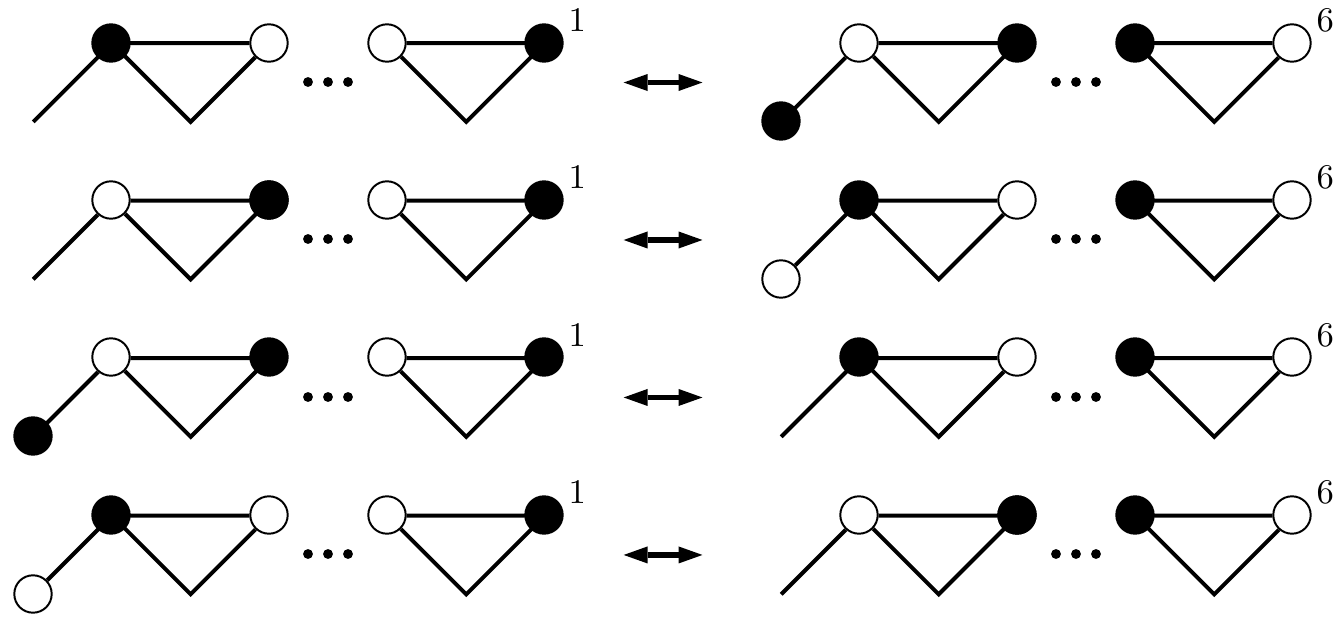}
\caption{An involution on dot configurations when the vertex $2r$ is occupied.}
\label{fig:dot_inv2}
\end{figure}
\begin{figure}
\includegraphics[scale=0.6]{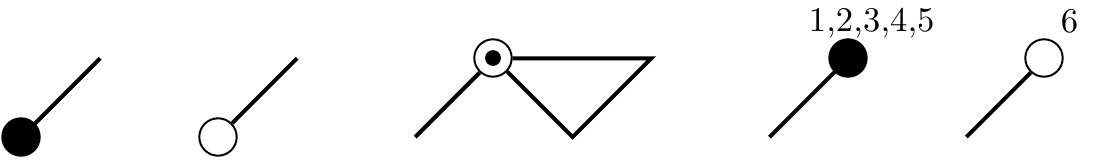}
\caption{The blocks which are fixed by the involution.}
\label{fig:dot_inv_fix}
\end{figure}
\begin{figure}
\includegraphics[scale=0.6]{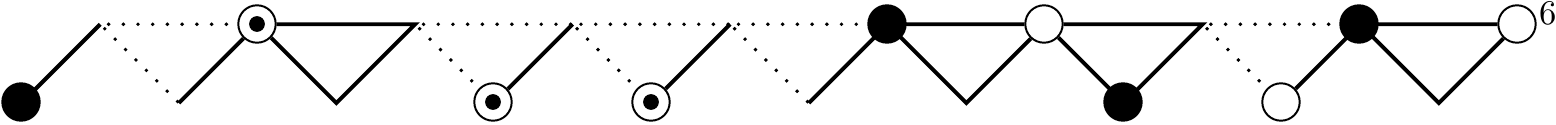}
\caption{The resulting dot configuration after an involution of a configuration $A$ in Figure~\ref{fig:involution_folding_ex}.
So we have $\iota(A)=\{1,4,7,9,12,15,25,28,31,36,38,41\}$.}
\label{fig:config_F_2}
\end{figure}

For a block of $2n$ vertices the involution sends a dot configuration of size $k$ to a configuration of size $2n-k$, where we do not count the black dot at $2r-2$ when the white dot with label $6$ on $2r$ is present.
The sum of all the number of vertices of all blocks is $2r$. 
So the involution is a bijection between $\mathcal{C}_k$ and $\mathcal{C}_{2r-k}$.

It is left to show that if $A\in \mathcal{C}_k$ and $B=\iota(A) \in \mathcal{C}_{2r-k}$ then 
\begin{align}\label{eq:weight_involution}
\prod_{i\in A} w(\gamma_i) = \prod_{j\in B} w(\gamma_j).
\end{align}
By a direct computation, we have
\begin{align*}
\gamma_{4r-(2a-1)}\gamma_{2a} &= \gamma_{4r-2a}\gamma_{2a+1}, &
\gamma_{2a-1}\gamma_{4r-2a} &= \gamma_{2a}\gamma_{4r-(2a+1)}, \\
\gamma_{2b-1}\gamma_{4r-(2b-1)} &= 1 , &
\gamma_{(2r;1)}\gamma_{2r+2} &= \gamma_{(2r+6)}\gamma_{2r+3},
\end{align*}
for $a\in[1,r-1]$ and $b\in[1,r]$.
These equations say that the following moves preserve the weight of the associated hard particle configuration.
\begin{align} \label{eq:ele_move}
\raisebox{-0.4\height}{\includegraphics[scale=0.6]{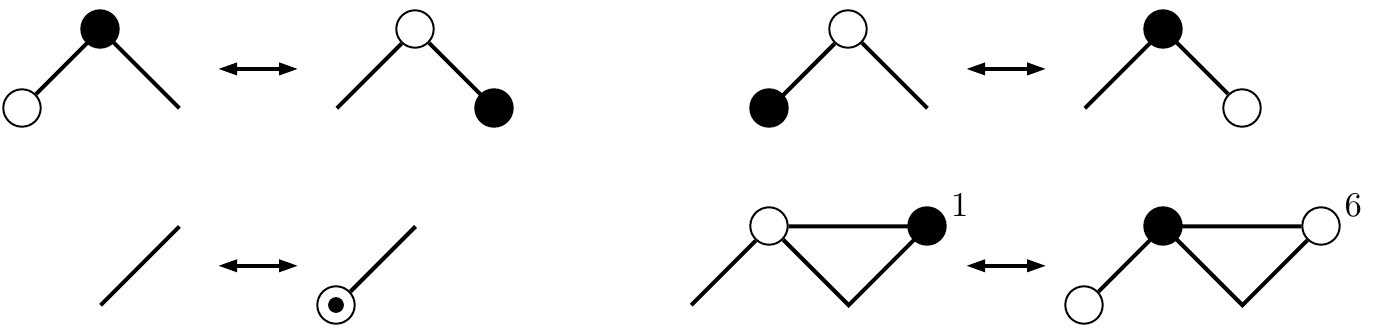}}
\end{align}
Since the involution on each block can be written as a sequence of the moves \eqref{eq:ele_move} (see Figure~\ref{fig:ex_ele_move} for an example), the involution preserves the weight for each block.
Hence the equation \eqref{eq:weight_involution} holds.
\begin{figure}
\includegraphics[scale=0.6]{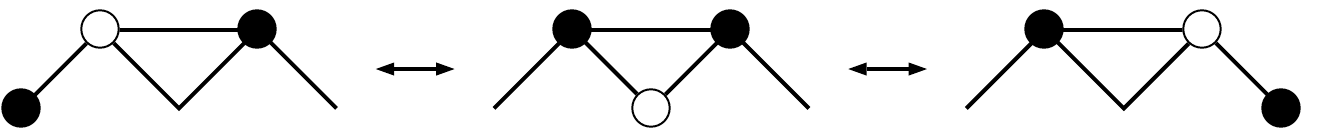}
\caption{The involution on a block can be written as a sequence of moves in \eqref{eq:ele_move}.}
\label{fig:ex_ele_move}
\end{figure}
\end{proof}

\subsection{Poisson bracket} \label{sec:bracket_b}

Similar to type $A$, we let let $C$ be the Cartan matrix of type $B_r$.
The signed adjacency matrix of the quiver of $B_r$ Q-system is
\begin{align*}
B = \begin{bsmallmatrix}C-C^T & C^T \\ -C & 0\end{bsmallmatrix}.
\end{align*}
Let $\X$ be a phase space with coordinates $(A_1,\dots,A_{2r})$.
Define a Poisson bracket on the algebra $\mathcal{O}(\X)$ of functions on $\X$ by
\[ \{ A_i , A_j \} = \Omega_{ij} A_i A_j  \quad (i,j\in [1,2r])\]
where the coefficient matrix $\Omega$ is defined by
\begin{align*}
\Omega = (B^T)^{-1} = -B^{-1}.
\end{align*}

Comparing to Proposition~\ref{prop:A_poisson_pairing}, the Poisson bracket for $B_r$ Q-system cannot be written as the intersection pairing.
This is due to the existence of faces with the same weight, which makes the Poisson bracket not local.
Nevertheless, experimental data still show that the Hamiltonians Poisson-commute.

\begin{conj} \label{conj:hamiltonian_commute_B}
The Hamiltonians of the $B_r$ Q-system Poisson-commute.
\end{conj}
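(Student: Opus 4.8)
\emph{Strategy.} The plan is to adapt the first proof of Theorem~\ref{thm:hamiltonian_commute_A}. That proof rested on two inputs: the formula $\{\gamma_i,\gamma_j\}=\epsilon(\Gamma_i,\Gamma_j)\gamma_i\gamma_j$ (Proposition~\ref{prop:bracket_gamma}, itself a consequence of the intersection–pairing identity of Proposition~\ref{prop:A_poisson_pairing}) and a size‑preserving, sign‑reversing involution on pairs of hard‑particle configurations of $G_r$, together with the vanishing Lemma~\ref{lem:odd_length}. For type $B$ the first input is unavailable: as noted, the identity $\{w(W),w(W')\}=\epsilon(W,W')\,w(W)w(W')$ fails because the double cover $\G$ carries distinct faces with equal weights, so the $\Omega$–bracket is nonlocal. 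I would therefore replace it by a direct computation.

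\emph{Step 1 (brackets of the $\gamma_i$).} Each loop weight is an explicit Laurent monomial $\gamma_i=w(\Gamma_i)=\prod_{\ell=1}^{2r}A_\ell^{m_{i\ell}}$, whose exponent vector $m_i\in\Z^{2r}$ is read off directly from $\Gamma_i$ (as tabulated for $B_2$). Since $\{A_\ell,A_m\}=\Omega_{\ell m}A_\ell A_m$ with $\Omega=-B^{-1}$ and $B=\begin{bsmallmatrix}C-C^{T}&C^{T}\\-C&0\end{bsmallmatrix}$ (invertible, with $\det B=(\det C)^{2}$), one gets
\[
\{\gamma_i,\gamma_j\}=\bigl(m_i^{T}\Omega\,m_j\bigr)\,\gamma_i\gamma_j=:\epsilon_{ij}\,\gamma_i\gamma_j .
\]
The first task is to evaluate $\epsilon_{ij}$ and exhibit it combinatorially on $G_r$. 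I expect $\epsilon_{ij}\in\{0,\pm1\}$ on all edges lying in the two ``ladder'' arms of $G_r$, with sign determined by the order of $i,j$ exactly as for $\epsilon(\Gamma_i,\Gamma_j)$ in Proposition~\ref{prop:bracket_gamma}, while the values on the edges inside the $K_6$ block and on the two extra edges at the vertex $(2r;6)$ must be computed separately; these finitely many exceptional entries are precisely where the nonlocality sits and need not agree with the corresponding intersection numbers.

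\emph{Step 2 (expansion).} By Theorem~\ref{thm:hamiltonian_gamma_Br}, $H_m=\sum_{|I|=m}\gamma_I$ with $\gamma_I:=\prod_{i\in I}\gamma_i$ over hard‑particle configurations $I$, so
\[
\{H_m,H_n\}=\sum_{(I,J)}\Bigl(\textstyle\sum_{i\in I,\,j\in J}\epsilon_{ij}\Bigr)\gamma_I\gamma_J ,
\]
the outer sum over pairs $(I,J)$ of hard‑particle configurations of sizes $m,n$. \emph{Step 3 (the involution).} I would construct an involution $\iota$ on such pairs preserving the monomial $\gamma_I\gamma_J$ and negating the coefficient $\sum_{i\in I,j\in J}\epsilon_{ij}$ on non‑fixed pairs, while forcing that coefficient to vanish on fixed pairs; summing over the $\iota$–classes then yields $\{H_m,H_n\}=0$. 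On the two ladder arms this is the type‑$A$ recipe — decompose $I\cup J$ into maximal chains and swap $I$/$J$ membership along the even‑length ones — with monomial invariance supplied by the identities $\gamma_{4r-(2a-1)}\gamma_{2a}=\gamma_{4r-2a}\gamma_{2a+1}$, $\gamma_{2a-1}\gamma_{4r-2a}=\gamma_{2a}\gamma_{4r-(2a+1)}$, $\gamma_{2b-1}\gamma_{4r-(2b-1)}=1$ already used in the proof that $H_k=H_{2r-k}$ (the moves~\eqref{eq:ele_move}), and with the vanishing on odd chains being a $B_r$ analogue of Lemma~\ref{lem:odd_length}, provable from Step~1. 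The genuinely new work is local to the $K_6$ block and the vertex $(2r;6)$: there a maximal chain is not an interval (in a hard‑particle configuration at most one vertex of $K_6$ is occupied, but one may lie in $I$ and another in $J$), so the local move must be designed — as in Figures~\ref{fig:dot_inv1}–\ref{fig:dot_inv_fix} for the folding involution, but now size‑preserving rather than size‑complementing — so that the exceptional $\epsilon_{ij}$ from the $K_6$–edges cancel between $(I,J)$ and $\iota(I,J)$, and so that a pair all of whose chains are odd has total coefficient $0$.

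\emph{Main obstacle.} The crux is Step~3 at the $K_6$. If Step~1 produces exceptional values $\epsilon_{ij}\notin\{0,\pm1\}$, the naive even‑chain swap need not be sign‑reversing there, and one may be forced to enlarge the $\iota$–classes beyond size two or to realize one step of $\iota$ as a composite of several moves~\eqref{eq:ele_move}; proving that the net $K_6$–contribution cancels would be a new combinatorial lemma playing the role of the bending Lemma~\ref{lem:bending} but with no ribbon‑graph interpretation available. A more structural alternative, worth attempting first, is to realize the $\Omega$–bracket as a genuine intersection pairing on some auxiliary bipartite torus graph — for instance a $\Z/2$–quotient of the extended graph $\overline{\G}$, or $\overline{\G}$ with a modified ribbon structure — in which case the second proof of Theorem~\ref{thm:hamiltonian_commute_A}, via Lemma~\ref{lem:bending}, would apply essentially verbatim; the remark that the $B_r$ Q‑system quiver admits no bipartite torus graph suggests this may require leaving the bipartite or the planar setting.
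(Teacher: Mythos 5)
There is a genuine gap here, and in fact the statement you were asked to prove is left as a conjecture in the paper itself: the author offers no proof, only the observation that the type-$A$ mechanism breaks down (the $\Omega$-bracket on the double-cover graph is no longer given by the intersection pairing, because distinct faces carry equal weights) together with experimental evidence. Your proposal correctly diagnoses this obstruction and lays out a sensible program, but it is a program, not a proof. The two steps that would carry all the mathematical content are left as expectations rather than results: in Step~1 you never actually compute the coefficients $\epsilon_{ij}=m_i^{T}\Omega\,m_j$ for the loops meeting the $K_6$ block and the vertex $(2r;6)$ (you only say you ``expect'' the ladder-arm entries to behave as in Proposition~\ref{prop:bracket_gamma}), and in Step~3 the involution is never constructed — you explicitly concede that the local move at the $K_6$ ``must be designed'' and that its sign-reversing property may fail if the exceptional $\epsilon_{ij}$ are not in $\{0,\pm1\}$. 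Without a $B_r$ analogue of Lemma~\ref{lem:odd_length}, without verified bracket values on the exceptional edges, and without a proof that fixed pairs have vanishing total coefficient, the claimed conclusion $\{H_m,H_n\}=0$ does not follow from anything you have established.

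Your ``structural alternative'' (realizing $\Omega$ as an intersection pairing on an auxiliary graph so that the second, bending-lemma proof of Theorem~\ref{thm:hamiltonian_commute_A} applies) is likewise only a suggestion, and it runs directly into the obstruction you yourself quote: the $B_r$ quiver has no associated bipartite torus graph, which is why the paper passes to a double cover in the first place, and on that cover the weight map identifies faces, destroying locality. So as written the proposal should be presented as a strategy toward Conjecture~\ref{conj:hamiltonian_commute_B}, not as a proof of it; to close the gap you would need, at minimum, the explicit table of $\epsilon_{ij}$ for all pairs involving $\{(2r;j)\}_{j=1}^{6}$ and the adjacent ladder vertices, a cancellation lemma replacing Lemma~\ref{lem:bending} for these terms, and a fully specified involution whose fixed-point and non-fixed-point contributions are both shown to vanish.
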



\section{Dimer integrable systems}\label{sec:gk}

In this section, we compare our constructions and results to \cite{GK13}.
First we summarize the result from \cite{GK13}.



\subsection{Minimal bipartite torus graphs from convex polygons}\label{sec:bipartitegraph}

Let $N$ be a convex polygon in $\R^2$ with corners in $\Z^2$, called \defemph{integral polygon}, considered up to translation by vectors in $\Z^2$.
We pick all the integral vertices on the boundary of $N$ (i.e. every vertex in $\partial N \cap \Z^2$) counterclockwise, and get a sequence of vertices $v_1,v_2,\dots,v_n$ where $n$ is the number of integral vertices on $\partial N$ and the indices are read modulo $n$.
Let vectors $e_i$ be vectors pointing from $v_i$ to $v_{i+1}$.
We get from the construction that each $e_i = (a_i,b_i)$ is a primitive vector, i.e. $a_i,b_i \in\Z$ and $\gcd (a_i,b_i)=1$.
We then get a collection $\{e_i\}$ of integral primitive vectors in $\Z^2$.

Consider the torus $\T=\R^2/\Z^2$.
Each $e_i$ determines a homology class $(a_i,b_i)\in H_1(\T,\Z)=\Z\times \Z$, and there is a unique up to translation geodesic representing this class.
In other words, it is an oriented straight line on $\T$ with slope $b_i/a_i$, i.e. a projection of $e_i$ on the torus $\T $.
Note that the geodesics are indeed oriented loops on $\T$ since their slopes are rational.

We then take a family of distinct oriented loops $\{\a_i\}$ on $\T$ such that the isotopy class of $\a_i$ matches the isotopy class of the geodesic representing $e_i$.
By Theorem~\ref{thm:graph_move_GK} we can choose $\{\a_i\}$ such that the loops are in generic position (no intersection of more than two loops) and satisfy the following conditions \cite[Definition 2.2]{GK13}:
\begin{enumerate}
  \item (admissibility) Going along any loops $\a_i$, the directions of the other loops intersecting it alternate (left-to-right or right-to-left).
  \item \label{cond:min}(minimality) The total number of intersections is minimal.
\end{enumerate}
The collection $\{\a_i\}$ provides a decomposition of $\T$ into a union of polygons whose oriented sides are parts of $\a_i$ and vertices are intersection points of the loops $\a_i$.
Then the first condition is equivalent to
\begin{enumerate}
  \item[(1\ensuremath{^\prime})] (admissibility) The sides of any polygon $P_i$ are either oriented clockwise, counterclockwise or alternate.
\end{enumerate}
The family of oriented loops $\{\a_i\}$ gives rise to an oriented graph on the torus $\T$.
We call an oriented graph on $\T$ satisfying the above conditions \defemph{minimal admissible graph on a torus}.

\begin{ex}
Starting from an integral polygon on the left, we obtained four primitive vectors depicted in the middle picture.
\begin{center}
\includegraphics[scale=0.7]{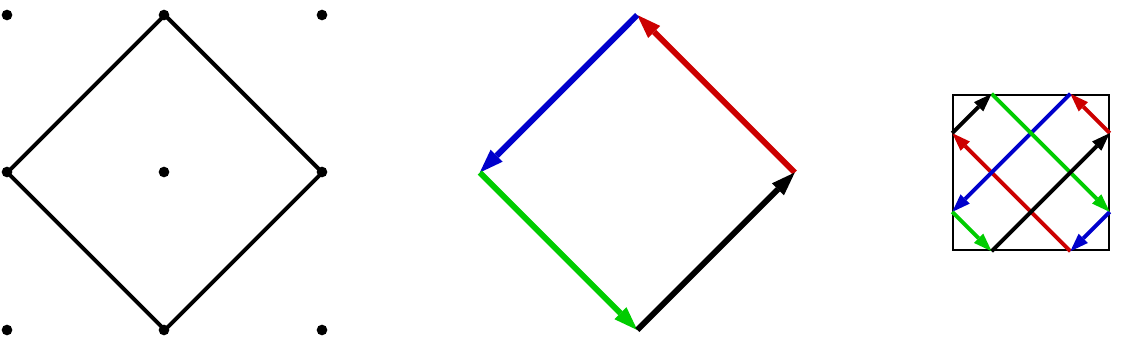}
\end{center}
The vectors are then associated with oriented loops on a torus, which gives a minimal admissible graph shown in the picture on the right.
\end{ex}

Given an admissible minimal torus graph, we construct a bipartite graph $\G$ on $\T$ by constructing vertices from the polygons $P_i$ where the coloring is determined as follows:
\begin{itemize}
  \item Polygons $P_i$ with counterclockwise orientation are associated with black vertices.
  \item Polygons $P_i$ with clockwise orientation are associated with white vertices.
\end{itemize}
From the construction, every intersection is a vertex shared by exactly two well-oriented polygons having opposite orientations.
We then associate each shared vertex with an edge connecting the two vertices of $\G$ associated to the two polygons.

We see that $\G$ is indeed a bipartite graph, and we will call it a \defemph{minimal bipartite torus graph}.
For the rest of the section, unless stated otherwise we assume that $G$ is a minimal bipartite torus graph obtained from an integral polygon $N$.

\begin{ex}
In our running example, there are eight polygons and eight intersection points.
\begin{center}
\includegraphics[scale=0.8]{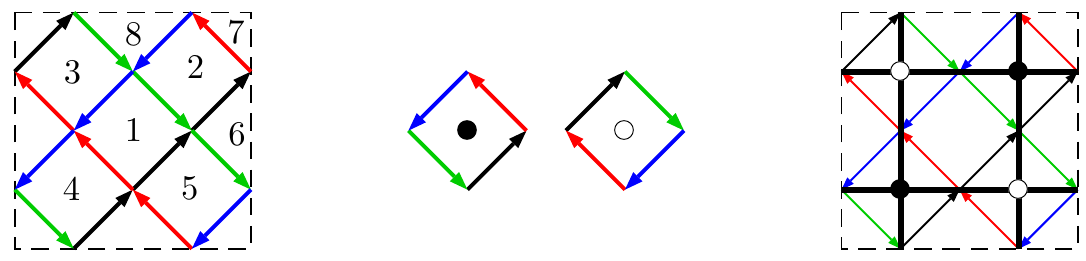}
\end{center}
In the picture, the polygons labeled by $2,3$ (resp. $4,5$) are oriented counterclockwise (resp. clockwise), so they are associated with black (resp. white) vertices of $\G$.
The polygons labeled by $1,6,7,8$ have alternate orientation.
The eight intersection points are associated with eight edges of $\G$.
\end{ex}

From the bipartite graph $G$, we can uniquely (up to translation) recover the starting integral polygon by reversing the process.
The convexity condition on the integral polygon will guarantee the uniqueness of the polygon.
We also note that a 180-degree rotation of $N$ corresponds to reversion of the orientation of all loops in the admissible minimal graph.
This will switch the color of the vertices of $\G$.

%
%
%
%

For an arbitrary integral polygon $N$, a minimal admissible graph on a torus associated with $N$ and a minimal bipartite torus graph associated with $N$ always exist.
Furthermore, we can obtain one minimal bipartite torus graph from another by use of the two elementary moves (definitions~\ref{def:urban} and \ref{def:shrink}), as stated in the following theorem.

\begin{thm}[{\cite[Theorem 2.5]{GK13}}]
\label{thm:graph_move_GK}
For any integral polygon $N$ there exists a minimal admissible graph on a torus associated with $N$.
It produces a minimal bipartite torus graph $\G$ associated with $N$.
Furthermore, any two minimal bipartite graphs on a torus associated with $N$ are related by a sequence of urban renewals and shrinking of 2-valent vertices.
\end{thm}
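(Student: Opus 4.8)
The statement breaks into three assertions---existence of a minimal admissible torus graph attached to $N$, the fact that the coloring recipe turns it into a bipartite graph, and connectivity of all such bipartite graphs under the two elementary moves---and the plan is to treat them in that order.

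For the first two parts the plan is to \emph{straighten}. Represent each primitive vector $e_i=(a_i,b_i)$ by the geodesic loop on $\T=\R^2/\Z^2$ in homology class $e_i$, taking distinct parallel translates when several $e_i$ coincide. Lifted to $\R^2$ these are straight lines, so two of them are either disjoint (parallel case) or meet transversally in exactly $|a_ib_j-a_jb_i|$ points, which equals the absolute value of their algebraic intersection number and is therefore minimal within the isotopy classes; similarly each loop is embedded. A small perturbation into general position, carried out within the isotopy classes, introduces no bigons---the geodesic arrangement has none and the perturbation is $C^0$-small---so minimality is preserved. Admissibility, in the form $(1')$, is where the hypothesis that $N$ is a \emph{closed convex} polygon, i.e.\ $\sum_i e_i=0$, enters: one checks locally, using the balance relation, that each complementary polygon is monochromatically oriented or alternating; alternatively, a non-admissible face in a minimal arrangement would exhibit a source/sink pattern on its boundary enabling an intersection-reducing isotopy, contradicting minimality. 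Once admissibility holds, at each crossing the four incident corners are, locally, one clockwise, one counterclockwise and two alternating, so assigning a black (resp.\ white) vertex to each counterclockwise (resp.\ clockwise) polygon and an edge to each crossing produces a well-defined bipartite graph $\G$ on $\T$, and the first two sentences follow.

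The substantive point is the ``furthermore''. The first step is a dictionary: a minimal bipartite torus graph $\G$ and the minimal admissible graph it comes from determine each other, the loops $\a_i$ being (up to isotopy) the zig-zag paths of $\G$; so it suffices to connect any two minimal admissible graphs carrying the same isotopy data $\{[e_i]\}$ by moves that translate, on the bipartite side, into urban renewals (Definition~\ref{def:urban}) and shrinkings of $2$-valent vertices (Definition~\ref{def:shrink}). The second step reduces this to \emph{local} moves on the arrangement: a minimal admissible arrangement is a reduced wiring diagram, and on a disk two reduced wiring diagrams with the same data are connected by triangle (braid) flips, a classical fact about reduced wiring diagrams; on the torus one cuts along a transversal to pass to a fundamental domain, or argues by induction on the number of crossings, repeatedly locating a triangle formed by three of the loops that bounds a disk containing no other crossing and flipping it, so as to bring any two arrangements to a common normal form. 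The third step is a finite local check: a triangle flip of the arrangement corresponds to an urban renewal of $\G$, possibly composed with shrinkings of the $2$-valent vertices the flip creates, and conversely.

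The main obstacle is exactly the passage from the disk to the torus in the second step: the classical pseudoline-arrangement connectivity results live on a disk, whereas on the torus the loops carry nontrivial homology, and one must show that the global winding data can be left untouched while performing only local flips---equivalently, that the space of minimal admissible torus graphs with fixed zig-zag homology is connected. I expect this to require the careful minimality/innermost-triangle (or sweep) argument rather than any direct computation, and it is the one place where $N$ being convex---hence the $e_i$ genuinely forming a cyclically ordered balanced family---is used in full strength.
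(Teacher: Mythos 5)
The paper itself offers no proof of this statement: it is quoted verbatim, with attribution, as Theorem~2.5 of \cite{GK13} and used as a black box, so there is no internal argument to compare yours against; what follows assesses your sketch on its own terms against the known Goncharov--Kenyon/Thurston strategy, whose architecture you correctly identify.

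There are, however, two genuine gaps. First, in the existence step you claim that generic parallel translates of the geodesics, slightly perturbed, are automatically admissible, and as a fallback that non-admissibility in a minimal arrangement would contradict minimality. Both claims fail: if an edge of $N$ has lattice length greater than one the collection $\{e_i\}$ contains repeated vectors, and for edge data such as $(1,0),(1,0),(0,1),(-1,0),(-1,0),(0,-1)$ a geodesic arrangement in which the two rightward horizontal loops are adjacent (not separated by the leftward ones) already achieves the minimal total crossing number $\sum_{i<j}|e_i\wedge e_j|$ yet violates alternation along the vertical loop, so minimality does not imply admissibility and no small perturbation repairs this; one must choose the translates interleaved according to the cyclic order of the boundary of $N$ (or, as in \cite{GK13}, work through triple point diagrams), and this choice is exactly what your argument omits. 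Second, and more seriously, the ``furthermore'' --- connectivity of all minimal bipartite torus graphs with the same polygon under urban renewals and shrinkings --- is the substantive content of the theorem, and you only gesture at it: the classical flip-connectivity results you invoke live on a disk, and your proposed fixes (cutting along a transversal, or locating an innermost empty flippable triangle) are not carried out and do not obviously work, since cutting the torus destroys the homological constraints on the strands and an empty triangle bounding a disk need not exist a priori. In \cite{GK13} the torus version of Thurston's triple-point-diagram connectivity requires a separate, nontrivial argument, so as written your proposal reduces the hard part of the theorem to an unproven claim rather than proving it.
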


\begin{remark}
Our bipartite torus graphs for the Q-systems of type A and B in sections~\ref{sec:A} and \ref{sec:B} are not obtained from non-degenerate convex polygons.

For $A_r$ Q-system, we reverse the process and construct oriented loops on the torus as in the following picture.
\begin{center}
\includegraphics[scale=0.7]{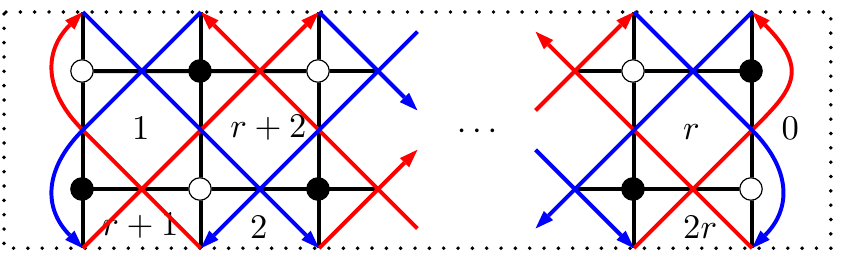}
\end{center}
We get only two oriented loops depicted in blue and red whose homology classes are $(0,-r-1)$ and $(0,r+1)$, respectively.
Notice that they are not primitive, and they form a vertical degenerate bigon with sides of length $r+1$.

For $B_r$ Q-system, we have the following picture.
\begin{center}
\includegraphics[scale=0.7]{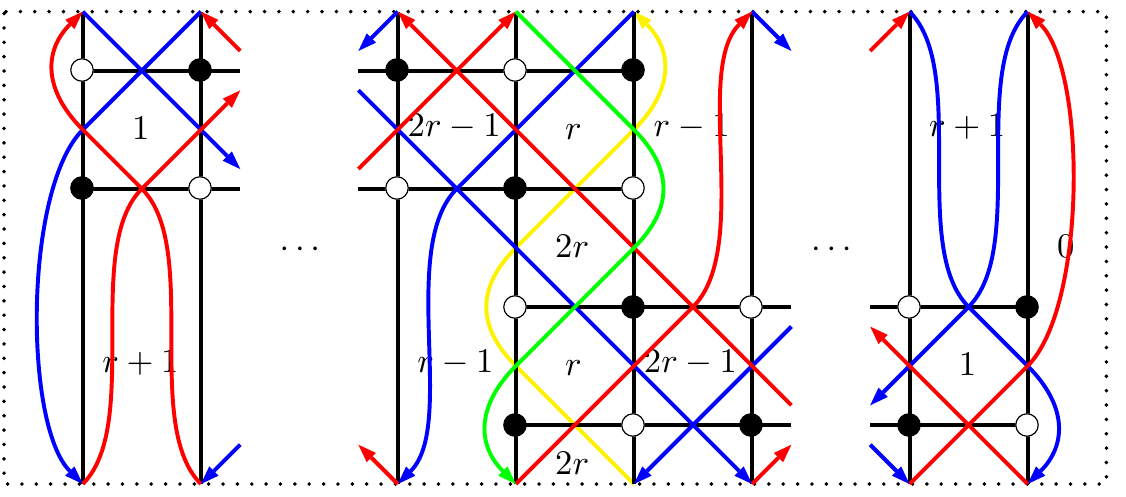}
\end{center}
There are four oriented loops depicted in blue, green, yellow and red whose homology classes are $(0,-2r+1),(0,-1),(0,1)$ and $(0,2r-1)$, respectively.
The blue and red loops are not primitive when $r > 1$.
The loops form a vertical degenerate quadrilateral.
\end{remark}


\subsection{Phase space and Poisson structure}\label{sec:manifold}

For a minimal bipartite torus graph $\G$ with $n$ faces, let $\L_\G$ be the moduli space of line bundles with connections on $\G$.
We have $\L_\G \cong \Hom(H_1(\G,\Z),\C^*)$, so combinatorially $\L_\G$ is the set of all weight assignments to all the loops on $\G$ compatible with loop multiplication. (The weight of a product of loops coincides with the product of their weights.)

For $j\in[1,n]$, let $y_j$ be the weight assigned to the counterclockwise loop $Y_j$ around the face $j$ of $\G$.
Since $\G$ is a graph embedded on a torus, there is a projection $H_1(\G,\Z)\rightarrow H_1(\T,\Z)$.
We then pick two loops $Z_1,Z_2$ having homology classes $(1,0),(0,1)\in H_1(\T,\Z)$ under the projection, and assign weight $z_1,z_2$ to them, respectively.
Any loop on $\G$ can then be generated by $y_j$'s together with $z_1$ and $z_2$, where a product of the variables corresponds to a product of loops.
Since the product of all the face loops is trivial, we must have $\prod_{j=1}^n w_j = 1$.
This is the only condition among the generators.
So $\dim \L_\G = n+1$ and the algebra $\mathcal{O}(\L_\G)$ of functions on $\L_\G$ has $(y_1,\dots,y_{n-1},z_1,z_2)$ as coordinates.

Note that this weight is different from our weight in Definition~\ref{def:weight_path}.
The connection between the two is discussed in Remark~\ref{rem:weight}.

For any loops $\Gamma_1,\Gamma_2$ on $\G$, the Poisson bracket of their weights is defined in terms of the intersection pairing of the twisted ribbon graph associated with $\G$ as in Definition~\ref{def:int_pairing}.

Now we define a Y-seed $(B,(y_1,\dots,y_n,z_1,z_2))$ of rank $n+2$ associated with $\L_\G$, where the exchange matrix $B = (B_{ij})$ and 
\[  B_{ij} = \epsilon(Y_i,Y_j) \quad\text{for }i,j\in[1,n+2] \]
with $Y_{n+1} := Z_1$ and $Y_{n+2} := Z_2$. 

Let $\G$ and $\G'$ be two minimal bipartite torus graphs associated with the same integral polygon $N$.
By Theorem~\ref{thm:graph_move_GK}, they are related by a sequence of elementary moves.
These moves induce an isomorphism $i_{\G,\G'}:\L_G \rightarrow \L_{G'}$ according to Y-seed mutations.
Let $\X_N$ be the \defemph{phase space} defined by gluing the spaces $\L_G$ by the isomorphisms.
The phase space depends only on $N$ and each isomorphism $i_{\G,\G'}$ can be viewed as a change of coordinate.

\begin{remark}
\label{rem:weight}
Let $G$ be a minimal bipartite torus graph with $n$ faces.
Each choice of face weight $(A_i)\in (\C^*)^n$ induces a weight assignment on oriented edges of $G$ by Definition~\ref{def:weight_path}.
This then induces a weight assignment on $Y_1,\dots,Y_n,Z_1,Z_2,$ hence a loop weight in $\L_G.$
Since $\L_\G$ has dimension $n+1$, not all loop weights of \cite{GK13} can be obtained from our weight in Definition~\ref{def:weight_path}.

In addition, the weight $y_j$ around face $j$ of $G$ is the $j^\text{th}$ $\tau-$coordinates (Definition~\ref{def:can_tran}) of a cluster seed $(\mathbf{A},B)$, i.e.
\[ y_j = \prod_{i=1}^n A_i^{B_{ij}} \]
where $B = B_G$ is the signed adjacency matrix of the quiver associated to $G$ (see Section~\ref{subsec:quiver_from_graph}). 
\end{remark}

\begin{ex}
Let $G$ be the following graph on the left.
It is obtained from a integral polygon whose counterclockwise edge vectors $e_i$ are $(1,1)$, $(-1,1)$, $(-1,-1)$ and $(1,-1)$.
Let $Y_i$ be the counterclockwise loop around the face $i$, and $Z_1$ (resp. $Z_2$) be the loop in the middle (resp. right) picture.
\begin{center}
\includegraphics[scale=1]{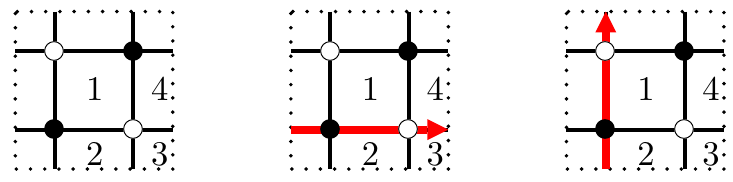}
\end{center}
Let $(A_1,A_2,A_3,A_4)\in(\C^*)^4$ be an arbitrary face weight on $G$.
It induces the following weights
\[ y_1 = \frac{A_4^2}{A_2^2},\quad y_2 = \frac{A_1^2}{A_3^2},\quad y_3 = \frac{A_2^2}{A_4^2},\quad y_4 = \frac{A_3^2}{A_1^2}, \quad z_1 = \frac{A_3 A_4}{A_1 A_2}, \quad z_2 = \frac{A_2 A_3}{A_1 A_4}. \]
They satisfy the following conditions
\[ y_1 y_3 = 1, \quad y_2 y_4 = 1, \quad z_1^2 = y_1 / y_2, \quad z_2^2 = 1/(y_1 y_2). \]
Since $y_1$ and $y_2$ are algebraically independent, the induced loop weight is a subspace of dimension $2$ inside $\L_G$ of dimension $5.$ ($L_G$ is of dimension $5$ because every loop weight in $\L_G$ satisfies $y_1y_2y_3y_4 = 1$.)
The map from face weight to loop weight is not injective.
We have the following 2-dimensional symmetries
\[ (A_1,A_2,A_3,A_4) \mapsto (\lambda A_1,\mu A_2,\lambda A_3,\mu A_4) \]
for $\lambda,\mu \in \C^*$.
\end{ex}


\subsection{Casimirs and Hamiltonians}\label{sec:hamiltonian_GK}

A \defemph{zig-zag path} on $\G$ is an oriented path on $\G$ which turns maximally left at white vertics and turns maximally right at black vertices \cite{Kenyon02, Postnikov06}.
They will always close up to form loops.
Notice that the projection of the zig-zag loops on the torus are in the same isotopy classes with the oriented loops $\a_i$ obtained from the primitive edges $e_i$ of $N$ (See Section~\ref{sec:bipartitegraph}).
So the oriented zig-zag loops are in bijection with $\{\a_i\}$.

The weight of these zig-zag loops, called \defemph{Casimirs}, generate the center of the Poisson algebra $\mathcal{O}(\L_\G)$ as described in the following proposition.

\begin{prop} [{\cite[Lemma 1.1]{GK13}}]
We consider the oriented zig-zag loops on a bipartite oriented surface graph $\G$.
Then as $Z$ runs over zig-zag loops, the functions $w_Z$ generate the center of the Poisson algebra $\mathcal{O}(\L_\G)$ of functions on $\L_\G$.
The product of all of them is 1.
This is the only relation between them.
\end{prop}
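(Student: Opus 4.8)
The plan is to recast the proposition as a computation of the radical of the intersection pairing $\epsilon$ on $H_1(\G,\Z)$. Since $H_1(\G,\Z)$ is a free abelian group, $\mathcal{O}(\L_\G)=\mathcal{O}(\Hom(H_1(\G,\Z),\C^*))$ is the Laurent group algebra $\C[H_1(\G,\Z)]$, with $\C$-basis the weight functions $w_\gamma$ for $\gamma\in H_1(\G,\Z)$, multiplication $w_\gamma w_\delta=w_{\gamma+\delta}$, and $w_Z=w_{[Z]}$ for a zig-zag loop $Z$. The Poisson bracket is the log-linear bracket $\{w_\gamma,w_\delta\}=\epsilon(\gamma,\delta)\,w_{\gamma+\delta}$ determined by the skew pairing $\epsilon$ of Definition~\ref{def:int_pairing}. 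Writing $f=\sum_\gamma c_\gamma w_\gamma$, the coefficient of $w_\mu$ in $\{f,w_\delta\}$ is $c_{\mu-\delta}\,\epsilon(\mu-\delta,\delta)$; since the $w_\mu$ are linearly independent, $f$ lies in the Poisson center iff $c_\gamma\ne 0$ forces $\epsilon(\gamma,\,\cdot\,)\equiv 0$. Hence the center is the group subalgebra $\C[\operatorname{rad}\epsilon]$ supported on the sublattice $\operatorname{rad}\epsilon=\{\gamma\in H_1(\G,\Z):\epsilon(\gamma,\delta)=0\text{ for all }\delta\}$, and it suffices to show that $\operatorname{rad}\epsilon$ is the subgroup generated by the classes $[Z]$ of zig-zag loops, and that $\sum_Z[Z]=0$ is the only relation among these classes.

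The key geometric input is that $\epsilon$ is the pullback, under the embedding $\iota\colon\G\hookrightarrow\mathcal{S}$, of the topological intersection form on the closed surface $\mathcal{S}$ obtained from the twisted ribbon structure on $\G$ (the rotation system reversed at white vertices), whose $2$-cells are disks glued along the zig-zag loops; in particular the oriented face boundaries of $\mathcal{S}$ are exactly the zig-zag loops of $\G$. Granting this, consider the cellular chain complex $\bigoplus_Z\Z D_Z\xrightarrow{\,\partial_2\,}C_1(\G)\xrightarrow{\,\partial_1\,}C_0(\G)$, where $D_Z$ is the $2$-cell bounded by $Z$ and $\partial_2D_Z=[Z]$. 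Then $H_1(\G,\Z)=\ker\partial_1$ and $H_1(\mathcal{S},\Z)=\ker\partial_1/\operatorname{im}\partial_2$, so $\iota_*$ is surjective with $\ker\iota_*=\operatorname{im}\partial_2=\langle[Z]:Z\text{ zig-zag}\rangle$, while the group of relations among the $[Z]$ equals $\ker\partial_2=H_2(\mathcal{S},\Z)$. Because each edge of $\G$ is used exactly twice by the zig-zag loops, once in each direction, one has $\sum_Z[Z]=0$ already as a $1$-chain, so $\sum_Z D_Z\in\ker\partial_2$; thus $H_2(\mathcal{S},\Z)\ne 0$, and (for $\G$ connected) $\mathcal{S}$ is a closed oriented surface with $H_2(\mathcal{S},\Z)=\Z\cdot\sum_Z D_Z$. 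This yields the last two claims: $\sum_Z[Z]=0$ is the only relation among the $[Z]$, hence $\prod_Z w_Z=w_{\sum_Z[Z]}=1$ is the only relation among the $w_Z$.

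It remains to identify $\operatorname{rad}\epsilon$ with $\ker\iota_*$. The inclusion $\operatorname{rad}\epsilon\supseteq\ker\iota_*$ is immediate from $\epsilon=\iota^*\langle\,\cdot\,,\,\cdot\,\rangle_{\mathcal{S}}$; in particular each $[Z]$, being a boundary in $\mathcal{S}$, is in the center. Conversely, if $\gamma\in\operatorname{rad}\epsilon$ then $\langle\iota_*\gamma,\iota_*\delta\rangle_{\mathcal{S}}=0$ for every $\delta\in H_1(\G,\Z)$, and since $\iota_*$ is onto, $\iota_*\gamma$ pairs trivially with all of $H_1(\mathcal{S},\Z)$; as the intersection form of a closed oriented surface is unimodular (Poincar\'e duality, $H_1$ being free), $\iota_*\gamma=0$, i.e.\ $\gamma\in\ker\iota_*$. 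Hence the Poisson center is $\C[\langle[Z]\rangle]$, which as a Laurent subalgebra of $\mathcal{O}(\L_\G)$ is generated by the $w_Z$, completing the proof modulo the geometric input.

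That geometric input is the main obstacle: one must verify that the combinatorial pairing $\epsilon$ of Definition~\ref{def:int_pairing} agrees with the topological intersection form on $\mathcal{S}$. This is a local, vertex-by-vertex check — at a shared vertex, perturb the two loops off the $1$-skeleton into $\mathcal{S}$ and compare the signed count of transverse intersections with the contribution prescribed in Definition~\ref{def:int_pairing}. Two features must be handled with care: the sign reversal at white vertices, which is precisely the twist making the faces of $\mathcal{S}$ the zig-zag loops instead of the face loops $Y_j$, and the half-integer contributions, which arise when the two loops run along a common edge of $\G$ and a fixed convention must be chosen for the side to which each loop is pushed. If one prefers, the easy inclusion $\langle[Z]\rangle\subseteq\operatorname{rad}\epsilon$ can instead be obtained by a direct local argument: a zig-zag loop turns maximally at every vertex, so its two half-edges at any vertex are cyclically adjacent in the rotation there and hence cannot interleave with the half-edges of any other loop, so every local contribution to $\epsilon(Z,\,\cdot\,)$ vanishes.
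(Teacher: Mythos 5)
The paper does not prove this proposition at all: it is quoted verbatim from \cite[Lemma 1.1]{GK13}, so the only comparison available is with Goncharov--Kenyon's own argument, which your proof essentially reproduces (Poisson center $=$ group algebra of the radical of $\epsilon$, zig-zag classes $=$ kernel of $H_1(\G,\Z)\to H_1(\mathcal{S},\Z)$ for the conjugate surface, nondegeneracy of the surface intersection form, and $\sum_Z [Z]=0$ as the unique relation). Your argument is correct as written, and the one step you flag as the ``main obstacle'' --- that the combinatorial pairing of Definition~\ref{def:int_pairing} is the intersection form of the twisted-ribbon (conjugate) surface --- is essentially how the bracket is defined in \cite{GK13}, so it needs only the routine local verification you describe.
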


Recall the construction of $\G$ from a minimal admissible graph (an arrangement of $\{\a_i\}_{i=1}^n$) in Section~\ref{sec:bipartitegraph}.
We see that each edge $e$ of $\G$ has exactly two loops $\a_i,\a_j \in \{\a_1,\dots,\a_n\}$ crossing it.
Let $\a_i$ and $\a_j$ be as the following.
\begin{center}
\includegraphics[scale=1]{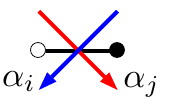}
\end{center}
Then we define a \defemph{reference perfect matching} $M_0$ to be the matching containing all edges $e$ such that $i>j$.
It is shown in \cite[Theorem 3.3]{GK13} that $M_0$ is indeed a perfect matching of $\G$.

We note that $M_0$ is not unique.
Since the indices of $\a_i$ can be read modulo $n$, another cyclic ordering gives another reference perfect matching.
There are also many other choices of $M_0$ including "fractional matchings", see \cite[Section 3.2-3.4]{GK13} for details.

%

\begin{remark}
We can show from \cite[Lemma 3.4]{GK13} that every quadrilateral face in $\G$ has exactly one side in $M_0$.
So every reference perfect matching $M_0$ constructed above always satisfies the requirement in Theorem~\ref{thm:hamiltonian_conserved}.
\end{remark}

Given a reference perfect matching $M_0$, the \defemph{weight} $w_{M_0}(M)$ of a perfect matching $M$ with respect to $M_0$ is defined to be the weight of the loop $[M]-[M_0]$, written in terms of $y_1,\dots,y_n,z_1,z_2$.

Recall the definition of $[M]_{M_0}$, the homology class of $M$ with respect to $M_0$, in Definition~\ref{def:hom_of_M}. 
The polygon with vertices at all homology classes $[M]_{M_0}\in H_1(\T,\Z)= \Z\times \Z$ coincides with the convex polygon $N$ up to translation \cite[Theorem~3.12]{GK13}.
Given a homology class $a\in H_1(\T,\Z)$, we let
\[ H_{M_0;a}:= \sum_{M} w_{M_0}(M) \]
where the sum runs over all perfect matchings $M$ of $\G$ having homology class $a$.
The \defemph{(modified) partition function} of perfect matchings of $\G$ is defined to be 
\[ P_{M_0} := \sum_a \sgn(a) H_{M_0;a} \]
where the sum runs over all possible homology classes of perfect matchings with respect to $M_0$.
The sign $\sgn(a)\in \{-1,1\}$ can be determined from a ``Kasteleyn matix", and they show up in the formula from the use of the determinant of a ``Kasterleyn operator", see \cite[Section 3.2]{GK13} for more details.

For $a\in H_1(\T,\Z) \cap \interior(N)$, a homology class which is an interior point of $N$, the function $H_{M_0;a}$ is called a \defemph{Hamiltonian}.
We note that a different choice of $M_0$ gives a different partition function and a different set of Hamiltonians.
However, they differ from each other by a common factor which lies in $\mathcal{O}(\L_\G)$.

These Hamiltonians are independent and commute under the Poisson bracket.

\begin{thm}[{\cite[Theorem 3.7]{GK13}}]
Let $\G$ be a minimal bipartite torus graph. Then
\begin{enumerate}
  \item The Hamiltonians $H_{M_0;a}$ commute under the Poisson bracket on $\L_\G$.
  \item The Hamiltonians are independent and their number is the half of the dimension of the generic symplectic leaf.
\end{enumerate}
\end{thm}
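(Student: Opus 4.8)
I would prove the two parts by separate arguments, both of which are already present in the paper in type-$A$ disguise. For part (1) I would run the argument of the subsection ``Another proof of Theorem~\ref{thm:hamiltonian_commute_A}'' on $\G$ directly; since here $\G$ and the reference matching $M_0$ already arise from the polygon $N$ and its edge vectors, the bending lemma (Lemma~\ref{lem:bending}) applies. Fix $a,b\in H_1(\T,\Z)$; writing $H_{M_0;a}=\sum_{M\in\M_a}w_{M_0}(M)$ and likewise for $b$, expand
\[ \{H_{M_0;a},H_{M_0;b}\}=\sum_{(M_1,M_2)\in\M_a\times\M_b}\{w_{M_0}(M_1),w_{M_0}(M_2)\}, \]
and use that the bracket is, by definition, the intersection pairing of the twisted ribbon graph (Definition~\ref{def:int_pairing}), so each summand equals $\epsilon([M_1]-[M_0],[M_2]-[M_0])\,w_{M_0}(M_1)w_{M_0}(M_2)$ with the pairing localized on the vertices of $[M_1]-[M_2]$. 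Introduce the involution $\iota$ exchanging $M_1,M_2$ along each homologically trivial component of $[M_1]-[M_2]$ and fixing the nontrivial ones. The products $w_{M_0}(M_1)w_{M_0}(M_2)$ are $\iota$-invariant; the local contributions on a trivial loop change sign under $\iota$, so two-element orbits cancel; the local contributions on a nontrivial loop sum to $0$ by Lemma~\ref{lem:bending}; and at a fixed point every component is nontrivial, so the whole summand vanishes, again by Lemma~\ref{lem:bending}. Hence $\{H_{M_0;a},H_{M_0;b}\}=0$.

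\emph{Part (2): independence and the count.} For the count I would combine $\dim\L_\G=n+1$ ($n$ the number of faces of $\G$, which equals the normalized area $2\operatorname{Area}(N)$) with the description of the Poisson center of $\mathcal{O}(\L_\G)$ as generated by the Casimirs $w_Z$ subject to the single relation $\prod_Z w_Z=1$; this makes the corank of the Poisson bivector $z-1$, where $z=|\partial N\cap\Z^2|$ is the number of zig-zag loops (one per primitive boundary edge of $N$). By Pick's theorem,
\[ \tfrac12\bigl(\dim\L_\G-(z-1)\bigr)=\tfrac12\bigl(2\operatorname{Area}(N)+2-z\bigr)=|\interior(N)\cap\Z^2|, \]
which is exactly the number of Hamiltonians. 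For independence I would use that the Newton polygon of the partition function $P_{M_0}$ is $N$ (\cite[Theorem 3.12]{GK13}): after a generic one-parameter degeneration of the loop weights each $H_{M_0;a}$ is dominated by a single monomial $w_{M_0}(M_a)$, and one checks that the logarithmic differentials of these dominant monomials are linearly independent modulo the Casimir sublattice, which forces the $dH_{M_0;a}$ to be linearly independent at a generic point of a symplectic leaf; alternatively one invokes the spectral-curve picture of \cite{GK13}, where the $H_{M_0;a}$ are, up to Casimirs, the coefficients of the characteristic polynomial of the Kasteleyn operator, and independence reflects the genus count for a generic curve with Newton polygon $N$.

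\emph{Main obstacle.} Part (1) is a clean involution argument, essentially verbatim the type-$A$ one, so I expect no trouble there. The technical heart is the independence statement in part (2): one must exhibit an explicit generic point of a symplectic leaf at which the Jacobian of the $|\interior(N)\cap\Z^2|$ Hamiltonians has full rank, and, in the chosen degeneration, control term by term which monomial of each $H_{M_0;a}$ survives so that the surviving exponent vectors remain independent modulo the Casimirs.
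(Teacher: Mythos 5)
First, a structural point: the paper does not prove this theorem at all — it is quoted from \cite{GK13} as background in the survey section, so there is no internal proof to compare against. The closest material in the paper is the subsection ``Another proof of Theorem~\ref{thm:hamiltonian_commute_A}'', which reproduces the GK13 commutation argument for the type-$A$ graph. Your part (1) is exactly that argument run in its native setting, and it is sound: on $\L_\G$ the bracket of loop weights is by definition the intersection pairing, so each summand of $\{H_{M_0;a},H_{M_0;b}\}$ localizes on the vertices of $[M_1]-[M_2]$; the involution $\iota$ swapping $M_1,M_2$ along homologically trivial components preserves $w_{M_0}(M_1)w_{M_0}(M_2)$ and flips the sign of the local contributions there, so two-element orbits cancel, while Lemma~\ref{lem:bending} (which applies because $M_0$ here is the reference matching built from the edge vectors of $N$) kills the contributions along homologically nontrivial components and hence the fixed points. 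This matches the paper's (and GK13's) route.

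Part (2) is where the genuine gap lies. The count is fine: $\dim\L_\G=n+1$ with $n=2\operatorname{Area}(N)$, the Casimirs give generic corank $z-1$ with $z=|\partial N\cap\Z^2|$, and Pick's theorem yields $\tfrac12\bigl(n+2-z\bigr)=|\interior(N)\cap\Z^2|$, the number of Hamiltonians. But \emph{independence} is the substantive claim, and your argument for it is asserted rather than proved: you would need to (i) exhibit a degeneration under which, for \emph{every} interior class $a$ simultaneously, a single matching's monomial dominates $H_{M_0;a}$ (ties must be excluded and the surviving exponent vectors depend on the chosen degeneration), and (ii) actually verify that these exponent vectors are linearly independent modulo the $(z-1)$-dimensional Casimir span; ``one checks'' is precisely the nontrivial content of the theorem. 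Your fallback — identifying the $H_{M_0;a}$ with coefficients of the Kasteleyn characteristic polynomial and appealing to the spectral-curve/genus picture — is not an argument but a citation of the machinery by which \cite{GK13} proves the statement. So as written, part (2) reduces to quoting GK13, which is what the paper itself does; if you intend a self-contained proof, the dominant-monomial rank computation must be carried out explicitly.
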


We also have that the partition function is invariant under the change of coordinates $i_{\G,\G'}$ (defined in Section~\ref{sec:manifold}).
This implies that all Hamiltonians are also invariant under the change of coordinates.
In fact, the map $i_{\G,\G'}$ is a unique rational transformation of face weights preserving the partition function, given a graph mutation from $\G$ to $\G'$. 

\begin{thm} [{ \cite[Theorem 4.7]{GK13} }] \label{thm:GK_urban_hamiltonian}
Given an urban renewal, there is a unique rational transformation of the weights preserving the partition function $P_a$.
This transformation is a Y-seed mutation.
\end{thm}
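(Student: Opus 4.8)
The plan is to first exhibit one rational transformation preserving the partition function and then show it is the only one. Fix an urban renewal at a quadrilateral face $k$ taking $\G$ to $\G'$, let $M_0'$ be the reference perfect matching of $\G'$ induced from $M_0$ (well defined since $k$ has exactly one side in $M_0$), and write $y_1,\dots,y_n,z_1,z_2$ for the loop coordinates on $\L_\G$. The candidate is the Y-seed mutation $\mu_k$ of Definition~\ref{def:mutation}, regarded as a rational map $\L_\G\dashrightarrow\L_{\G'}$; this is meaningful because the urban renewal sends $\Q_\G$ to $\Q_{\G'}=\mu_k(\Q_\G)$.

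First I would prove $P_{M_0'}\circ\mu_k=P_{M_0}$. Stratify the perfect matchings of $\G$ by the number $j(M)\in\{0,1,2\}$ of the four sides of $k$ that they contain, and stratify those of $\G'$ by how many of the new edges around $k$ they use, so that $P_{M_0}=P_0+P_1+P_2$ and $P_{M_0'}=P_0'+P_1'+P_2'$. Reusing the local matching correspondences from the proof of Theorem~\ref{thm:hamiltonian_conserved} --- a weight-preserving bijection on the $j=1$ stratum, a one-to-two map on the $j=0$ stratum and a two-to-one map on the $j=2$ stratum --- together with the substitution $y_k\mapsto y_k^{-1}$, $y_i\mapsto y_iy_k^{[B_{ki}]_+}(1+y_k)^{-B_{ki}}$ for $i\ne k$, one checks term by term that the factors $(1+y_k)^{\pm1}$ produced by $\mu_k$ are exactly what is needed to absorb the one-to-two splittings, yielding $P_2'\circ\mu_k=P_0$, $P_1'\circ\mu_k=P_1$ and $P_0'\circ\mu_k=P_2$. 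Because an urban renewal leaves the Newton polygon $N$ unchanged, the Kasteleyn signs $\sgn(a)$ attached to a homology class $a$ are the same for $\G$ and for $\G'$, so the three identities assemble into $P_{M_0'}\circ\mu_k=P_{M_0}$.

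For uniqueness, suppose $\phi:\L_\G\dashrightarrow\L_{\G'}$ is any rational map with $P_{M_0'}\circ\phi=P_{M_0}$. Viewing $P_{M_0}$ as a Laurent polynomial in $z_1,z_2$ whose coefficients lie in the field generated by $y_1,\dots,y_n$, its support is $N\cap\Z^2$; at each vertex $a$ of $N$ there is a unique perfect matching of homology class $a$, so the corresponding coefficient of $P_{M_0}$ --- and of $P_{M_0'}$ --- is a single Laurent monomial. Matching these corner monomials, and their ratios along the edges of $N$, forces $\phi$ to agree with $\mu_k$ on $z_1,z_2$ and on every $y_i$ with $i\ne k$; one further comparison --- for instance, of the coefficient of a fixed matching in the $j=1$ stratum --- then forces $\phi(y_k)=y_k^{-1}$. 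Hence $\phi=\mu_k$, and the unique weight transformation preserving the partition function is the Y-seed mutation.

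The step I expect to be the main obstacle is the sign bookkeeping: one must verify that $\sgn(a)$ is genuinely an invariant of the pair $(N,a)$, hence untouched both by the local graph move and by the passage from $M_0$ to $M_0'$, since a stray sign would spoil the stratum-by-stratum comparison and, through it, the identification of coefficients used in the uniqueness argument. The matching correspondences and the $(1+y_k)$ accounting are routine adaptations of the proof of Theorem~\ref{thm:hamiltonian_conserved}, and the reconstruction of the coordinates from the coefficients of $P_{M_0}$, while requiring some care in selecting enough of them, reduces to linear algebra on exponent vectors.
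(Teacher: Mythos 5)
You should first note that the paper does not prove this statement at all: it is quoted verbatim from \cite{GK13} in the survey Section~\ref{sec:gk}, so the only meaningful comparison is with the argument in \cite{GK13} itself (and with the analogous Theorem~\ref{thm:hamiltonian_conserved} proved in this paper for cluster-variable weights). Your existence half is the right plan and is essentially that argument transposed to the $y$-coordinates: stratify matchings by how many sides of the face $k$ they use, use the bijection on the middle stratum and the one-to-two and two-to-one local correspondences on the outer strata, and check that the $(1+y_k)^{\pm 1}$ factors of the Y-seed mutation absorb the splittings; since the move is local, homology classes and hence the signs $\sgn(a)$ are unchanged. Modulo actually carrying out the term-by-term check, this part is sound.

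The uniqueness half, however, has a genuine gap. The coefficients of $P_{M_0}$ at the corners of $N$, and more generally along $\partial N$, are Casimirs: they are (sums of) products of zig-zag loop weights. These generate only the center of the Poisson algebra, i.e.\ at most one fewer function than the number of zig-zag loops, whereas you need to pin down all $n+1$ coordinates $(y_1,\dots,y_{n-1},z_1,z_2)$ on $\L_\G$; on any symplectic leaf of positive dimension (which exists exactly when $N$ has interior points, the case of interest) the boundary data is constant, so ``matching the corner monomials and their ratios along the edges of $N$'' cannot force $\phi$ to agree with $\mu_k$ on every $y_i$ with $i\neq k$ and on $z_1,z_2$. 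The final step is also not available as stated: $P_{M_0}$ only exposes, for each homology class, the \emph{sum} over all matchings in that class, so there is no such thing as ``the coefficient of a fixed matching in the $j=1$ stratum'' to compare. The uniqueness proof in \cite{GK13} proceeds differently: one expands the partition functions of $\G$ and of $\G'$ as linear combinations of the partition functions of the graph with the four vertices of the square deleted in the various admissible ways; these external partition functions are independent as functions of the weights away from the face $k$, so equating the two expansions yields a small system of equations on the local weights around $k$ whose unique solution is exactly the Y-seed mutation. To salvage your route you would need an injectivity statement of that kind (separating variables via the unchanged part of the graph), not merely the boundary data of the Newton polygon.
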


By counting the number of Hamiltonians and Casimirs, we can conclude on the integrability of the system.

\begin{thm}[{\cite[Theorem 1.2]{GK13}}] \label{thm:GK_integrable}
Let $M_0$ be a reference perfect matching obtained from a circular-order-preserving map.
The Hamiltonian flows of $H_{M_0,a}$ commute, providing an integrable system on $\X_N$.
Precisely, we get integrable systems on the generic symplectic leaves of $\X_N$, given by the level sets of the Casimirs.
\end{thm}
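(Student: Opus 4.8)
The plan is to assemble the statement from three ingredients already available in this section: the Poisson-commutation of the Hamiltonians, their functional independence together with the correct count, and the identification of the symplectic leaves of $\X_N$ with the common level sets of the Casimirs. I would first recall the notion of integrability appropriate to a Poisson (rather than symplectic) phase space: a collection of functions that pairwise Poisson-commute, are functionally independent on a dense open set, and whose number equals half the dimension of a generic symplectic leaf, i.e.\ restricted to such a leaf they form a maximal commuting family.

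Next I would pin down the symplectic foliation. On each chart $\L_\G \cong \Hom(H_1(\G,\Z),\C^*)$ the bracket is the log-canonical bracket determined by the intersection pairing of the twisted ribbon graph, and \cite[Lemma 1.1]{GK13} identifies the center of $\mathcal{O}(\L_\G)$ with the subalgebra generated by the Casimirs (weights of zig-zag loops), subject only to the relation that their product is $1$. Hence the symplectic leaves of $\L_\G$ are exactly the common level sets of the Casimirs on the locus where the Poisson rank is maximal. I would then check compatibility with the gluing isomorphisms $i_{\G,\G'}$: since by Theorem~\ref{thm:graph_move_GK} any two charts are related by urban renewals and shrinkings of $2$-valent vertices, and these moves act on weights by Y-seed mutations carrying zig-zag loops to zig-zag loops, the Casimirs glue and the foliation descends to a well-defined symplectic foliation of $\X_N$ whose leaves are the level sets of the Casimirs.

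Then I would invoke \cite[Theorem 3.7]{GK13} twice: once for $\{H_{M_0;a},H_{M_0;b}\}=0$ for all homology classes $a,b\in \interior(N)\cap\Z^2$, and once for the statement that these Hamiltonians are functionally independent and that their number is half the dimension of a generic symplectic leaf. To treat them as functions on $\X_N$ I would use that the modified partition function is invariant under every $i_{\G,\G'}$ (Theorem~\ref{thm:GK_urban_hamiltonian}), so that on overlaps the Hamiltonians agree up to multiplication by a common factor in the center $\mathcal{O}(\L_\G)$; restricting to a fixed symplectic leaf, where every Casimir is constant, the Hamiltonians glue to honest functions on the leaf. Commutation of the corresponding Hamiltonian flows is then automatic from Poisson-commutation, and the flows are complete on the leaves (quasi-affine algebraic tori), so on each generic leaf we obtain $\tfrac12\dim(\text{leaf})$ independent commuting integrals, which is precisely a Liouville--Arnold integrable system; letting the Casimirs vary gives the family of integrable systems on $\X_N$.

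The main obstacle I expect is the dimension bookkeeping hidden inside \cite[Theorem 3.7]{GK13}: verifying that the number of interior lattice points of $N$, i.e.\ the number of Hamiltonians, equals half the dimension of the generic symplectic leaf. This amounts to computing the generic rank of the intersection-pairing matrix $B$ and relating $|\interior(N)\cap\Z^2|$, the number of boundary lattice points (which index the Casimirs), and the corank of $B$ via Pick's theorem. A secondary technical point is ensuring the Hamiltonians genuinely descend to $\X_N$ modulo Casimirs and that the Casimir level sets are smooth symplectic leaves precisely on the dense open locus of maximal Poisson rank, so that the integrable-system conclusion holds on generic leaves as stated.
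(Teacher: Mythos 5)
There is no internal proof to compare against: the paper states this result verbatim as a quotation of \cite[Theorem 1.2]{GK13} in its review section, and never argues it. So the only fair assessment is of how your reconstruction sits relative to the source material the section quotes, and on that score your routing is essentially the intended one: Casimirs generate the Poisson center by \cite[Lemma 1.1]{GK13}, so generic symplectic leaves are their common level sets; the Hamiltonians pairwise commute and are independent in the right number by \cite[Theorem 3.7]{GK13}; and the structures glue over the charts $\L_\G$ because the coordinate changes $i_{\G,\G'}$ coming from Theorem~\ref{thm:graph_move_GK} are Y-seed mutations preserving the partition function (Theorem~\ref{thm:GK_urban_hamiltonian}), so everything descends to $\X_N$. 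The genuinely new content in your write-up is the descent/gluing bookkeeping, which is fine; the genuinely hard content (commutation, and above all that the number of interior lattice points of $N$ equals half the dimension of a generic leaf, which in \cite{GK13} comes from relating the corank of the intersection form to the boundary lattice points indexing the Casimirs) you invoke rather than prove, and you correctly flag this as the main obstacle. Two small cautions: the ``common central factor'' ambiguity you mention pertains to changing the reference matching $M_0$, not to the chart changes $i_{\G,\G'}$ --- under the latter the Hamiltonians are honestly preserved, so no restriction to a leaf is needed to make them well defined on $\X_N$; and completeness of the flows is not part of the claim in this algebraic setting, so it need not be argued. With those caveats, your proposal is a faithful citation-level reconstruction of the argument the paper itself outsources to \cite{GK13}.
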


\begin{remark}
We notice that the integrable system described in this section is a classical dynamical system where the evolutions are Hamiltonian flows.
This system also contains a discrete dynamical system whose evolution is a change of coordinate $i_{G,G'}$ (Y-seed mutation on loop weights).

For a graph $\G$ periodic (up to a relabeling of vertices) under a sequence of urban renewals and shrinking of 2-valent vertices, we take the change of loop weight under such sequence to be the dynamic of a discrete system.
Since the graph is periodic, the dynamic is a Poisson map with respect to the Poisson bracket in Section~\ref{sec:manifold}.
The Casimirs and Hamiltonians return to the same form under such sequence.
Since they are also invariant by Theorem~\ref{thm:GK_urban_hamiltonian}, they are conserved quantities of the system.
By Theorem~\ref{thm:GK_integrable}, the quantities form a maximal set of Poisson-commuting invariants, hence the system is discrete Liouville integrable. 
\end{remark}

\section{Conclusion and discussion}

In this paper, we studied a discrete dynamic on a weighted bipartite torus graph, obtained from an urban renewal on the graph and cluster mutation on the weight.
The weight is defined differently from \cite{GK13}.
The graph can be any bipartite graph on a torus, not necessarily obtained from an integral polygon.
The Hamiltonians are defined and proved to be invariant under the mutation.

For a Q-system of type $A$, we constructed a weighted bipartite torus graph which is periodic under a sequence of mutations up to a face relabeling.
The weight changes according to the Q-system relation.
So the Hamiltonians are conserved quantities of the system.
This coincides with the result from \cite{GP16}.
We also showed that the Hamiltonians can be written as hard-particle partition functions on a certain graph, which coincides with the result in \cite{DFK10}.
A nondegenerate Poisson bracket is defined, and the Hamiltonians Poisson commute. 

For a Q-system of type $B$, a bipartite torus graph is constructed.
The graph is a double-cover of the dual graph associated with the Q-system quiver.
It is periodic under a sequence of mutations up to a face relabeling.
The weight are transformed according to the Q-system relation.
The Hamiltonians are conserved quantities of the system.
They can also be interpreted as hard-particle partition functions on a certain graph.
We conjecture that the conserved quantities Poisson commute under the Poisson bracket defined in Section~\ref{sec:bracket_b}.

One could wonder what happens for types $C, D$ and other exceptional types. The sequence of mutations in \cite[Theorem 3.1]{DFK09} contains a mutation at a vertex of degree greater than 4.
Recall that an urban renewal corresponds to a mutation at a vertex of degree 4 which has exactly two incoming and two outgoing arrows.
Although some mutations in type $A$ and $B$ happen at a vertex of degree less than 4, we fixed this issue by adding a frozen vertex.
However this technique is not applicable when the degree of a mutating vertex is greater than 4.
This problem might possibly be solved by unfolding or factoring the quiver so as to restore the 4-valent property.
We leave this for future work.


\bibliography{refs}

\newcommand{\etalchar}[1]{$^{#1}$}
\begin{thebibliography}{HKO{\etalchar{+}}02}

\bibitem[Ciu98]{Ciu98}
Mihai Ciucu.
\newblock A complementation theorem for perfect matchings of graphs having a
  cellular completion.
\newblock {\em J. Combin. Theory Ser. A}, 81(1):34--68, 1998.

\bibitem[DF13]{DF13}
Philippe Di~Francesco.
\newblock An inhomogeneous lambda-determinant.
\newblock {\em Electron. J. Combin.}, 20(3):Paper 19, 34, 2013.

\bibitem[DFK09]{DFK09}
Philippe Di~Francesco and Rinat Kedem.
\newblock {$Q$}-systems as cluster algebras. {II}. {C}artan matrix of finite
  type and the polynomial property.
\newblock {\em Lett. Math. Phys.}, 89(3):183--216, 2009.

\bibitem[DFK10]{DFK10}
Philippe Di~Francesco and Rinat Kedem.
\newblock {$Q$}-systems, heaps, paths and cluster positivity.
\newblock {\em Comm. Math. Phys.}, 293(3):727--802, 2010.

\bibitem[EFS12]{EFS12}
Richard Eager, Sebasti\'an Franco, and Kevin Schaeffer.
\newblock Dimer models and integrable systems.
\newblock {\em J. High Energy Phys.}, (6):106, front matter+24, 2012.

\bibitem[FH14]{FH14}
Allan~P. Fordy and Andrew Hone.
\newblock Discrete integrable systems and {P}oisson algebras from cluster maps.
\newblock {\em Comm. Math. Phys.}, 325(2):527--584, 2014.

\bibitem[FM11]{FM11}
Allan~P. Fordy and Robert~J. Marsh.
\newblock Cluster mutation-periodic quivers and associated {L}aurent sequences.
\newblock {\em J. Algebraic Combin.}, 34(1):19--66, 2011.

\bibitem[FM16]{FM16}
Vladimir~V. Fock and Andrey Marshakov.
\newblock {\em Loop Groups, Clusters, Dimers and Integrable Systems}, pages
  1--65.
\newblock Springer International Publishing, Cham, 2016.

\bibitem[FZ02]{FZ}
Sergey Fomin and Andrei Zelevinsky.
\newblock Cluster algebras. {I}. {F}oundations.
\newblock {\em J. Amer. Math. Soc.}, 15(2):497--529 (electronic), 2002.

\bibitem[FZ07]{FZ4}
Sergey Fomin and Andrei Zelevinsky.
\newblock Cluster algebras. {IV}. {C}oefficients.
\newblock {\em Compos. Math.}, 143(1):112--164, 2007.

\bibitem[GK13]{GK13}
Alexander~B. Goncharov and Richard Kenyon.
\newblock Dimers and cluster integrable systems.
\newblock {\em Ann. Sci. \'Ec. Norm. Sup\'er. (4)}, 46(5):747--813, 2013.

\bibitem[GP16]{GP16}
P.~{Galashin} and P.~{Pylyavskyy}.
\newblock {Quivers with subadditive labelings: classification and
  integrability}.
\newblock {\em ArXiv e-prints}, June 2016.

\bibitem[GSTV12]{GSTV12}
Michael Gekhtman, Michael Shapiro, Serge Tabachnikov, and Alek Vainshtein.
\newblock Higher pentagram maps, weighted directed networks, and cluster
  dynamics.
\newblock {\em Electron. Res. Announc. Math. Sci.}, 19:1--17, 2012.

\bibitem[GSTV16]{GSTV16}
Michael Gekhtman, Michael Shapiro, Serge Tabachnikov, and Alek Vainshtein.
\newblock Integrable cluster dynamics of directed networks and pentagram maps.
\newblock {\em Adv. Math.}, 300:390--450, 2016.

\bibitem[GSV03]{GSV03}
Michael Gekhtman, Michael Shapiro, and Alek Vainshtein.
\newblock Cluster algebras and {P}oisson geometry.
\newblock {\em Mosc. Math. J.}, 3(3):899--934, 1199, 2003.
\newblock \{Dedicated to Vladimir Igorevich Arnold on the occasion of his 65th
  birthday\}.

\bibitem[Her10]{Her10}
David Hernandez.
\newblock Kirillov-{R}eshetikhin conjecture: the general case.
\newblock {\em Int. Math. Res. Not. IMRN}, (1):149--193, 2010.

\bibitem[HI14]{HI14}
Andrew N.~W. Hone and Rei Inoue.
\newblock Discrete {P}ainlev\'e equations from {Y}-systems.
\newblock {\em J. Phys. A}, 47(47):474007, 26, 2014.

\bibitem[HKO{\etalchar{+}}99]{HKOTY99}
G.~Hatayama, A.~Kuniba, M.~Okado, T.~Takagi, and Y.~Yamada.
\newblock Remarks on fermionic formula.
\newblock In {\em Recent developments in quantum affine algebras and related
  topics ({R}aleigh, {NC}, 1998)}, volume 248 of {\em Contemp. Math.}, pages
  243--291. Amer. Math. Soc., Providence, RI, 1999.

\bibitem[HKO{\etalchar{+}}02]{HKOTT02}
Goro Hatayama, Atsuo Kuniba, Masato Okado, Taichiro Takagi, and Zengo Tsuboi.
\newblock Paths, crystals and fermionic formulae.
\newblock In {\em Math{P}hys odyssey, 2001}, volume~23 of {\em Prog. Math.
  Phys.}, pages 205--272. Birkh\"auser Boston, Boston, MA, 2002.

\bibitem[JMZ13]{JMZ13}
In-Jee Jeong, Gregg Musiker, and Sicong Zhang.
\newblock Gale-{R}obinson sequences and brane tilings.
\newblock In {\em 25th {I}nternational {C}onference on {F}ormal {P}ower
  {S}eries and {A}lgebraic {C}ombinatorics ({FPSAC} 2013)}, Discrete Math.
  Theor. Comput. Sci. Proc., AS, pages 707--718. Assoc. Discrete Math. Theor.
  Comput. Sci., Nancy, 2013.

\bibitem[Kas63]{Kas63}
P.~W. Kasteleyn.
\newblock Dimer statistics and phase transitions.
\newblock {\em J. Mathematical Phys.}, 4:287--293, 1963.

\bibitem[Ked08]{RK08}
Rinat Kedem.
\newblock {$Q$}-systems as cluster algebras.
\newblock {\em J. Phys. A}, 41(19):194011, 14, 2008.

\bibitem[Ken02]{Kenyon02}
R.~Kenyon.
\newblock The {L}aplacian and {D}irac operators on critical planar graphs.
\newblock {\em Invent. Math.}, 150(2):409--439, 2002.

\bibitem[KNS11]{KNS11}
Atsuo Kuniba, Tomoki Nakanishi, and Junji Suzuki.
\newblock {$T$}-systems and {$Y$}-systems in integrable systems.
\newblock {\em J. Phys. A}, 44(10):103001, 146, 2011.

\bibitem[KR87]{KR87}
A.~N. Kirillov and N.~Yu. Reshetikhin.
\newblock Representations of {Y}angians and multiplicities of the inclusion of
  the irreducible components of the tensor product of representations of simple
  {L}ie algebras.
\newblock {\em Zap. Nauchn. Sem. Leningrad. Otdel. Mat. Inst. Steklov. (LOMI)},
  160(Anal. Teor. Chisel i Teor. Funktsi\u\i . 8):211--221, 301, 1987.

\bibitem[MP07]{MP07}
Gregg Musiker and James Propp.
\newblock Combinatorial interpretations for rank-two cluster algebras of affine
  type.
\newblock {\em Electron. J. Combin.}, 14(1):Research Paper 15, 23, 2007.

\bibitem[MS10]{MS10}
Gregg Musiker and Ralf Schiffler.
\newblock Cluster expansion formulas and perfect matchings.
\newblock {\em J. Algebraic Combin.}, 32(2):187--209, 2010.

\bibitem[{Pos}06]{Postnikov06}
A.~{Postnikov}.
\newblock {Total positivity, Grassmannians, and networks}.
\newblock {\em ArXiv Mathematics e-prints}, September 2006.

\bibitem[Spe07]{Spe07}
David~E. Speyer.
\newblock Perfect matchings and the octahedron recurrence.
\newblock {\em J. Algebraic Combin.}, 25(3):309--348, 2007.

\bibitem[Wil15]{Wil15}
Harold Williams.
\newblock {$Q$}-systems, factorization dynamics, and the twist automorphism.
\newblock {\em Int. Math. Res. Not. IMRN}, (22):12042--12069, 2015.

\end{thebibliography}
\bibliographystyle{alpha}

\end{document}